\documentclass[12pt]{article}

\usepackage{amssymb}
\usepackage{amsmath}
\usepackage{amsbsy}
\usepackage{amscd}
\usepackage{amsfonts}
\usepackage{amsthm}
\usepackage{mathrsfs}
\usepackage{verbatim}
\usepackage{hyperref}
\usepackage{fullpage}
\usepackage{mathdots}
\usepackage{graphicx,subfigure}
\usepackage[english]{babel}
\usepackage[utf8]{inputenc}
\usepackage{tikz}
\usepackage[normalem]{ulem}

\usetikzlibrary{backgrounds,fit, matrix}
\usetikzlibrary{positioning}
\usetikzlibrary{calc,through,chains}
\usetikzlibrary{arrows,shapes,snakes,automata, petri}
\usepackage{authblk}
\usepackage{mathtools}

\theoremstyle{definition}
\newtheorem{Theorem}{Theorem}[section]
\newtheorem{Corollary}[Theorem]{Corollary}
\newtheorem{Lemma}[Theorem]{Lemma}
\newtheorem{Proposition}[Theorem]{Proposition}
\newtheorem{Example}[Theorem]{Example}

\newtheorem{Remark}[Theorem]{Remark}
\newtheorem{Definition}[Theorem]{Definition}
\newtheorem{Notation}[Theorem]{Notation}

\newcommand{\C}{{\mathbb C}}

\newcommand{\Z}{{\mathbb Z}}

\newcommand{\N}{{\mathbb N}}

\newcommand{\mb}[1]{\mathbb{ #1}}

\newcommand{\mc}[1]{\mathcal{#1}}

\newcommand{\mbf}[1]{\mathbf{ #1}}

\newcommand{\red}[1]{{\color{red} #1}}

\newcommand{\mtt}[1]{\mathtt{#1}}

\newcommand\bfU{\mathbf{U}}

\begin{document}

\title{Irreducible Unipotent Numerical Monoids}
\author[1]{Mahir Bilen Can}
\author[2]{Naufil Sakran}

\affil[1]{{\small Tulane University, New Orleans LA; \href{mailto:mahirbilencan@gmail.com}{mahirbilencan@gmail.com}}}
\affil[2]{{\small Tulane University, New Orleans LA; \href{mailto:nsakran@tulane.edu}{nsakran@tulane.edu}}}
	
\normalsize
	
\date{\today}
\maketitle
	
\begin{abstract}
In our earlier article~\cite{CanSakran} we initiated a study of the complement-finite submonoids of the group of integer points of a unipotent linear algebraic group. In the present article, we continue to develop tools and techniques for analyzing our monoids. In particular, we initiate a theory of ideals for unipotent numerical monoids. 
\medskip
		
\noindent 
\textbf{Keywords:  Ap\'ery set, pseudo-Frobenius set, Frobenius set, relative ideal, (pseudo-)symmetric unipotent numerical monoid, 
strongly (pseudo-)symmetric unipotent numerical monoid, irreducible unipotent numerical monoid} 
\medskip
		
\noindent 
\textbf{MSC:  20G99, 20M14, 11B75}  
\end{abstract}

\section{Introduction}
This article builds on the observation that the theory of generalized numerical monoids is part of a broader family of finitely generated monoids that naturally emerge within algebraic groups. 
Our primary objective here is to determine several structural invariants of these more general monoids, while shedding new light on generalized numerical semigroups, which form a fast developing area of research in semigroup theory. 
We name here only a few of the most relevant articles since the actual list is very long:~\cite{DiPasqualeGillespieBryanPeterson, BTT2022, SinghaiLin2022, CistoTenorio2021,CDGS, CFU2019, CFPU2019,FPU2016}. 
Here, by a generalized numerical monoid, we mean a co-finite submonoid of $\N^n$ for some positive integer $n$. 
Aside from their direct relation to commutative algebra~\cite{DGMV,BGS2023}, such commutative monoids arise in coding theory as multipoint Weierstrass semigroups~\cite{Homma1996, Kim1994, Matthews2001, CarvalhoKato, TT2019, BeelenTutas}. 
The setup of our theory, which we will maintain throughout this article is as follows.

Let $\mathbf{U}(n,\C)$ denote the unipotent group consisting of all upper triangular unipotent matrices of size $n$ with entries from $\C$.
Let $G$ be a closed subgroup of $\mathbf{U}(n,\mathbb{C})$. 
The group of integer points of $G$ is denoted by $G_\Z$. 
We set 
\begin{align*}
G(\N):=\{A\in G_{\Z} : \text{all entries of $A$ are from $\N$}\}.
\end{align*} 
Then $G(\N)$ is a submonoid of $G_\Z$.
We call a finitely generated submonoid $\mc{S}\subset G(\N)$, where the set $G(\N) \setminus \mc{S}$ is finite, a \red{\em unipotent numerical monoid in $G$}.
For example, if $G$ is given by the unipotent algebraic group 
\begin{align*}
\left\{
\begin{bmatrix}
1 & a_1 & a_2 &  \dots & a_{n-1}  \\
0 & 1 & 0 & \dots & 0 \\
0 & 0 & 1 & \dots & 0 \\
\vdots & \vdots & \vdots & \ddots & \vdots\\
0 & 0 & 0 & \dots & 1
\end{bmatrix}
:\ \{a_1,\dots, a_{n-1}\} \subset \C\right\} \qquad (n\geq 2),
\end{align*}
which we denote by $\mathbf{P}(n,\C)$, then we recover all generalized numerical monoids of~\cite{FPU2016} as unipotent numerical monoids.
If we employ semialgebraic topology and consider semialgebraic groups, then we recover the theory of affine $\mathcal{C}$-semigroups, introduced in~\cite{GMV2018}, as part of our theory as well.

A tool that we introduce in our paper is a notion of a semigroup of relative ideals.
Here, a \red{\em relative right ideal} of a unipotent numerical monoid $\mc{S}$ is a subset $\mc{I}\subseteq G(\N)$ such that 
\begin{align*}
\mc{I}\mc{S}\subseteq \mc{I}.
\end{align*}
A \red{\em relative left ideal} is defined similarly. 
It is a subset $\mc{I}\subseteq G(\N)$ such that $\mc{SI}\subseteq \mc{I}$.
Finally, we call a subset $\mc{I}\subseteq G(\N)$ that is both a left and right relative ideal a \red{\em two-sided relative ideal} of $\mc{S}$.
For brevity, we will subsequently refer to a two-sided relative ideal simply as a `\red{\em relative ideal}.'
Before proceeding to articulate our results, we will state our rationale concerning the definitions of relative right (or relative left) ideals. 
We could adopt a more generalized approach, considering subsets $\mc{I} \subset G_\Z$ where $\mc{IS}\subseteq \mc{I}$ holds, along with an additional condition, such as requiring the existence of an element $A\in \mc{S}$ such that $A\mc{I} \subset \mc{S}$. 
However, using this broader definition complicates our presentation, potentially obscuring the focus of our article. 
Instead of pursuing this cumbersome route, which is unnecessary for our purposes, we have opted to work specifically with the subsets of $G(\N)$.

The set of all relative ideals of $\mathcal{S}$, denoted by $Rel_T(\mathcal{S})$, forms a monoid under the product $(\mathcal{I},\mathcal{J})\mapsto \mathcal{IJ}:=\{AB \mid A\in \mathcal{I}\text{ and } B\in \mathcal{J}\}$, where $\{\mathcal{I},\mathcal{J}\}\subset Rel_T(\mathcal{S})$. 
The neutral element of $Rel_T(\mathcal{S})$ is given by $\mathcal{S}$. 
In the sequel, we will prove that $Rel_T(\mathcal{S})$ is closed under finite intersections and finite unions.
Furthermore, our monoid has two important submonoids, denoted $Rel_T(\mathcal{S},\supseteq)$ and $Rel_T(\mathcal{S},\subseteq)$, which are sublattices. 
The former submonoid consists of relative ideals of $\mathcal{S}$ contained in $\mathcal{S}$. 
The latter submonoid, which is finite, consists of relative ideals that contain $\mathcal{S}$. 
It is worth mentioning that while we have both left and right-sided versions of these objects, our discussion in the introduction is limited to the two-sided version for the sake of simplicity in our presentation. 
Our first main result is the following assertion.

\begin{Theorem}\label{intro:T1}
An element $\mc{T}$ of $Rel_T(\mc{S},\subseteq)$ is an idempotent if and only if $\mc{T}$ is a unipotent numerical monoid.
Furthermore, if $\mc{S}$ is commutative, then the idempotents of $Rel_T(\mc{S},\subseteq)$ form a band. 
\end{Theorem}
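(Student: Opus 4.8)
The plan is to handle the two assertions in turn; the first requires no commutativity hypothesis, while the second rests on one auxiliary fact about $G(\N)$. For the equivalence, observe that membership of $\mc{T}$ in $Rel_T(\mc{S},\subseteq)$ already forces $\mc{S}\subseteq\mc{T}\subseteq G(\N)$, so $\mc{T}$ is co-finite in $G(\N)$ and contains the identity matrix $I$. If $\mc{T}$ is a unipotent numerical monoid, then it is closed under the matrix product, so $\mc{T}\mc{T}\subseteq\mc{T}$, and $\mc{T}=I\mc{T}\subseteq\mc{T}\mc{T}$ because $I\in\mc{T}$; hence $\mc{T}\mc{T}=\mc{T}$. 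Conversely, if $\mc{T}\mc{T}=\mc{T}$, then $\mc{T}$ is closed under products and contains $I$, so it is a submonoid of $G(\N)$; it is co-finite since $\mc{S}\subseteq\mc{T}$, and co-finite submonoids of $G(\N)$ are automatically finitely generated (cf.~\cite{CanSakran}), so $\mc{T}$ is a unipotent numerical monoid. In effect, inside $Rel_T(\mc{S},\subseteq)$ the property ``unipotent numerical monoid'' just means ``submonoid''.

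For the second statement, the crux is the claim that if $\mc{S}$ is commutative then $G(\N)$ is commutative. Given $A,B\in G(\N)$, which we may assume are $\neq I$, note that a nontrivial unipotent matrix in characteristic zero has infinite order, so $\{A^k:k\geq 1\}$ and $\{B^k:k\geq 1\}$ are infinite; co-finiteness of $\mc{S}$ then yields an $N$ with $A^k,B^k\in\mc{S}$ for all $k\geq N$, and commutativity of $\mc{S}$ gives $A^kB^k=B^kA^k$ for such $k$. The Zariski closure $\overline{\langle A\rangle}$ in $G$ is a one-parameter unipotent subgroup in which $\langle A^k\rangle$ stays Zariski-dense for every $k\neq 0$, so, centralizers being Zariski-closed, $C_G(A^k)=C_G(A)$, and similarly $C_G(B^k)=C_G(B)$. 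From $A^NB^N=B^NA^N$ we obtain $B^N\in C_G(A^N)=C_G(A)$, hence $A\in C_G(B^N)=C_G(B)$, i.e.\ $AB=BA$. (Alternatively, transporting $A^kB^k=B^kA^k$ along $\log$ and applying Baker--Campbell--Hausdorff turns it into a polynomial identity in $k$ whose degree-two coefficient forces $[\log A,\log B]=0$.)

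Granting this, the ideal product is commutative, since $\mc{I}\mc{J}=\{AB:A\in\mc{I},\,B\in\mc{J}\}=\{BA:A\in\mc{I},\,B\in\mc{J}\}=\mc{J}\mc{I}$, so $Rel_T(\mc{S},\subseteq)$ is a commutative monoid; and in any commutative semigroup the idempotents are closed under multiplication, as $(ef)(ef)=e^2f^2=ef$ whenever $e^2=e$ and $f^2=f$. Being nonempty---it contains $\mc{S}$---this set of idempotents is a band. Via the first part, this is concretely the statement that the product $\mc{T}_1\mc{T}_2$ of two co-finite submonoids of $G(\N)$ containing $\mc{S}$ is again one, which can also be verified directly: $I=I\cdot I\in\mc{T}_1\mc{T}_2$, $\mc{S}\subseteq\mc{T}_1\cdot I\subseteq\mc{T}_1\mc{T}_2$, and $(A_1B_1)(A_2B_2)=(A_1A_2)(B_1B_2)\in\mc{T}_1\mc{T}_2$ by commutativity. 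The main obstacle is the propagation of commutativity from $\mc{S}$ to $G(\N)$; once that fact is available---from the argument above or from \cite{CanSakran}---the rest is formal.
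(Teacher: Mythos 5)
Your argument is correct, and its first half is essentially the paper's own proof (Lemma~\ref{L:unmiffidempotent}): for $\mc{T}\supseteq\mc{S}$ containing $\mathbf{1}_n$, idempotency is equivalent to closure under products, and co-finiteness is inherited from $\mc{S}$; your explicit remark that co-finite submonoids of $G(\N)$ are automatically finitely generated (so that ``submonoid'' genuinely upgrades to ``unipotent numerical monoid'') makes precise a point the paper leaves implicit. The genuine divergence is in the band statement. The paper's Proposition~\ref{P:itisaband} rests entirely on the one-line assertion that, $\mc{S}$ being commutative, the elements of $\mc{I}$ and $\mc{J}$ commute with each other; since elements of $\mc{I}$ and $\mc{J}$ lie in $G(\N)$ rather than in $\mc{S}$, that assertion is exactly what needs an argument, and the paper does not supply one. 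You do: the powers of a nontrivial unipotent matrix in characteristic zero are pairwise distinct, so co-finiteness of $\mc{S}$ puts $A^N$ and $B^N$ in $\mc{S}$ for large $N$, giving $A^NB^N=B^NA^N$; then Zariski-density of $\langle A^N\rangle$ in the one-parameter group $\overline{\langle A\rangle}$, together with closedness of centralizers, yields $C_G(A^N)=C_G(A)$ and hence $AB=BA$. In other words, you prove that commutativity of $\mc{S}$ forces commutativity of all of $G(\N)$, after which closure of the idempotents under the ideal product is formal, exactly as in the paper. What your route buys is a complete justification of the step the paper glosses over, plus a structural fact of independent interest; what the paper's route buys is brevity, at the cost of an unproved intermediate claim. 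I see no gap in your argument.
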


By fixing a unipotent numerical monoid $\mc{S}$, we actually fix several interesting partial orders on the group of integer points of $G$. 
For $A$ and $B$ from $G_\Z$, we have 
\begin{enumerate}
\item $A \leq_{\mc{S},r} B \iff BA^{-1}\in \mc{S}$, 
\item $A \leq_{\mc{S},l} B \iff A^{-1}B\in \mc{S}$,
\item $A \leq_{\mc{S},t} B \iff \{A^{-1}B, BA^{-1}\}\subset \mc{S}$.
\end{enumerate}
Let us call these partial orders collectively the \red{\em $\mc{S}$-orders} on $G_\Z$.

\begin{Theorem}\label{intro:T4}
If $\leq$ is an $\mc{S}$-order on $G_\Z$, then the pair $(G(\N),\leq)$ is a locally finite poset.
\end{Theorem}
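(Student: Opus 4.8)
The plan is to prove the stronger assertion that for every $B\in G(\N)$ and every one of the three $\mc{S}$-orders $\leq$, the principal order ideal $\{C\in G(\N):C\leq B\}$ is finite; since any interval $[A,B]$ is contained in such an order ideal, local finiteness of $(G(\N),\leq)$ follows at once. (Reflexivity, transitivity, and antisymmetry of the $\mc{S}$-orders have already been recorded above, so $(G(\N),\leq)$ is genuinely a poset; I would not re-derive this.)

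The engine of the argument is an elementary observation about nonnegative unipotent matrices, which I would isolate as a short lemma: if $X,Y,Z$ belong to $\mathbf{U}(n,\N)$, the monoid of upper triangular unipotent $n\times n$ matrices with entries in $\N$, and $X=YZ$, then $Y_{ij}\leq X_{ij}$ and $Z_{ij}\leq X_{ij}$ for all $i<j$. Indeed, writing $Y=I+M$ and $Z=I+N$ with $M,N$ strictly upper triangular and nonnegative, the $(i,j)$-entry of $X=I+M+N+MN$ is $M_{ij}+N_{ij}+\sum_{i<k<j}M_{ik}N_{kj}$, a sum of nonnegative terms, hence it dominates both $M_{ij}=Y_{ij}$ and $N_{ij}=Z_{ij}$.

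Now fix $B\in G(\N)$ and let $C\in G(\N)$ satisfy $C\leq B$. If $\leq$ is $\leq_{\mc{S},r}$, then $BC^{-1}\in\mc{S}\subseteq G(\N)\subseteq\mathbf{U}(n,\N)$, so the factorization $B=(BC^{-1})\,C$ lies entirely in $\mathbf{U}(n,\N)$, and the lemma gives $C_{ij}\leq B_{ij}$ for all $i<j$. If $\leq$ is $\leq_{\mc{S},l}$, the same conclusion follows from $B=C\,(C^{-1}B)$ with $C^{-1}B\in\mc{S}\subseteq G(\N)$. Since $\leq_{\mc{S},t}$ refines both of these, the bound holds for it as well. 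Consequently, in every case
\[
\{C\in G(\N): C\leq B\}\subseteq \{C\in G_\Z : 0\leq C_{ij}\leq B_{ij}\ \text{for all}\ i<j\},
\]
and the set on the right is finite, being cut out inside a finite product of finite integer intervals. Thus the order ideal below $B$ is finite, hence so is every interval $[A,B]$, and $(G(\N),\leq)$ is locally finite.

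There is no real obstacle here. The only points deserving care are: verifying that the auxiliary factors $BC^{-1}$ and $C^{-1}B$ actually lie in $G(\N)$ — which is precisely the defining condition of the relevant $\mc{S}$-order combined with the containment $\mc{S}\subseteq G(\N)$ — and recognizing that it is the nonnegativity of all entries involved that forces the entrywise domination in the lemma. I would also remark that the argument yields slightly more than claimed: each $\leq_{\mc{S},r}$- and $\leq_{\mc{S},l}$-order ideal in $G(\N)$ is finite, which is strictly stronger than local finiteness.
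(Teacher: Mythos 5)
Your proof is correct and rests on the same key observation as the paper's: multiplying nonnegative unipotent matrices can only increase the strictly-upper-triangular entries, so any $C\leq B$ satisfies $C_{ij}\leq B_{ij}$ and the relevant sets are finite. The paper asserts this entrywise domination without proof and then finishes by a contradiction argument on an infinite interval, whereas you justify the domination explicitly and conclude directly that every principal lower order ideal is finite (a slightly stronger statement), but the approach is essentially identical.
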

	
Just as the torsion monoid of a unipotent numerical monoid $\mc{S}$ has a strong order theoretic structure, the semigroups of ideals of $\mc{S}$ also have a strong order theoretic property. 
We observe that, for a subset $\mc{I}\subseteq \mc{S}$, the complement  
$\mc{S}\setminus \mc{I}$ is a lower order ideal of the poset $(\mc{S},\leq_{\mc{S},l})$ if and only if $\mc{I}$ is a right ideal of $\mc{S}$.  
Similarly, $\mc{S}\backslash\mc{I}$ is a lower order ideal of the poset $(\mc{S},\leq_{\mc{S},r})$ if and only if $\mc{I}$ is a left ideal of $\mc{S}$. 
\medskip

Two fundamental invariants of a numerical semigroup $S$ are its Frobenius number and the set of pseudo-Frobenius elements. 
For unipotent numerical monoids, there are various sets that serve similar purposes. 
However, the Frobenius number is replaced by the Frobenius set. 
We proceed to introduce these important gadgets.

Let $\mc{S}$ be a unipotent numerical monoid.
Let $\mc{I}$ be a subset of $\mc{S}$. 
The \red{\em left Frobenius set of $\mc{I}$} is defined by 
$\mtt{F}_l(\mc{I}):= \{ A \in G(\N) \mid A \notin \mc{S} \text{ and } A(G(\N)^*)\subseteq \mc{I}\}$.
The \red{\em left pseudo-Frobenius set of $\mc{I}$} is defined by 
$\mtt{PF}_l(\mc{I}):= \{ A \in G(\N) \mid A \notin \mc{S} \text{ and } AS^*\subseteq \mc{I}\}$.
The right sided as well as the two-sided versions of these sets are denoted by $\mtt{F}_r(\mc{I}),\mtt{F}_t(\mc{I}), \mtt{PF}_r(\mtt{I})$, and $\mtt{PF}_t(\mc{I})$. 
The definitions of these sets are similar to those of $\mtt{F}_l(\mc{I})$ and $\mtt{PF}_l(\mc{I})$. 
Notice that the inclusions $\mtt{F}_l(\mc{I}) \subseteq \mtt{PF}_l(\mc{I})$, $\mtt{F}_r(\mc{I})\subseteq \mtt{PF}_r(\mc{I})$, and $\mtt{F}_t(\mc{I})\subseteq \mtt{PF}_t(\mc{I})$ always hold true. However, often these inclusions are strict inclusions. 

We call a unipotent numerical monoid $\mc{S}$ \red{\em irreducible} if there are no two unipotent numerical monoids $\mc{T}_1$ and $\mc{T}_2$, properly containing $\mc{S}$ such that $\mc{T}_1\cap \mc{T}_2 = \mc{S}$. 	
The following theorems indicate that both Frobenius and the pseudo-Frobenius sets are critical notions for investigating the irreducibility.
	
\begin{Theorem}\label{intro:T5}
Let $\mc{S}$ be a unipotent numerical monoid.
If one of the following conditions hold true, then $\mc{S}$ is irreducible: 
\begin{enumerate}
\item[(1)] If $|\mtt{F}_l(\mc{S})|=1$ and for every $A\in G(\N)\setminus \mc{S}$ we have $\mtt{F}_l(\mc{S}) \cap A \mc{S} \neq \emptyset$; 
\item[(2)] If $|\mtt{F}_r(\mc{S})|=1$ and for every $A\in G(\N)\setminus \mc{S}$ we have $\mtt{F}_r(\mc{S}) \cap \mc{S}A \neq \emptyset$;
\item[(3)] If $|\mtt{F}_t(\mc{S})|=1$ and for every $A\in G(\N)\setminus \mc{S}$ we have $\mtt{F}_t(\mc{S}) \cap \mc{S}A \cap A\mc{S} \neq \emptyset$.
\end{enumerate}
\end{Theorem}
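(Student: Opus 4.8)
The plan is to prove the contrapositive: assume $\mc{S}$ is reducible, so there exist unipotent numerical monoids $\mc{T}_1,\mc{T}_2$ with $\mc{S}\subsetneq \mc{T}_i$ and $\mc{T}_1\cap\mc{T}_2=\mc{S}$, and derive that each of the three conditions must fail. I will carry out the argument in the left-sided case (1); cases (2) and (3) are entirely parallel, with $A\mc{S}$ replaced by $\mc{S}A$, respectively $\mc{S}A\cap A\mc{S}$, and $\leq_{\mc{S},l}$ replaced by $\leq_{\mc{S},r}$, respectively $\leq_{\mc{S},t}$. So from now on assume $|\mtt{F}_l(\mc{S})|=1$, say $\mtt{F}_l(\mc{S})=\{F\}$, and suppose toward a contradiction that $\mc{S}$ is reducible via $\mc{T}_1,\mc{T}_2$ as above; I will produce an element $A\in G(\N)\setminus\mc{S}$ with $F\notin A\mc{S}$, i.e. with $\mtt{F}_l(\mc{S})\cap A\mc{S}=\emptyset$.

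Pick any $A_i\in \mc{T}_i\setminus\mc{S}$ for $i=1,2$. Since $\mc{T}_i\setminus \mc{S}$ is finite (both $G(\N)\setminus\mc{S}$ and hence $\mc{T}_i\setminus\mc{S}\subseteq G(\N)\setminus\mc{S}$ are finite), and since by Theorem~\ref{intro:T4} the poset $(G(\N),\leq_{\mc{S},l})$ is locally finite, I can choose $A_i$ to be $\leq_{\mc{S},l}$-maximal in $\mc{T}_i\setminus\mc{S}$. The key claim is then that each such maximal $A_i$ lies in $\mtt{F}_l(\mc{S})$: indeed, for any $B\in G(\N)^*$ the product $A_iB$ lies in $\mc{T}_i$ (as $\mc{T}_i$ is a monoid containing $G(\N)^*$-units? — here one must check $G(\N)^*\subseteq\mc{T}_i$, which holds because the units of $G(\N)$ lie in every complement-finite submonoid), and $A_iB\geq_{\mc{S},l}A_i$, so by maximality $A_iB\in\mc{S}$ unless $A_iB=A_i$; either way $A_iB\in\mc{S}$ when $B\neq I$, giving $A_i(G(\N)^*)\subseteq\mc{S}\subseteq\mc{S}$, hence $A_i\in\mtt{F}_l(\mc{S})$. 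Wait — the definition of $\mtt{F}_l(\mc{S})$ requires $A_i(G(\N)^*)\subseteq \mc{I}=\mc{S}$; so $A_i\in\mtt{F}_l(\mc{S})$, and since $|\mtt{F}_l(\mc{S})|=1$ we conclude $A_1=A_2=F$. But then $F=A_1\in\mc{T}_1$ and $F=A_2\in\mc{T}_2$, so $F\in\mc{T}_1\cap\mc{T}_2=\mc{S}$, contradicting $F\in\mtt{F}_l(\mc{S})$, whose elements by definition are not in $\mc{S}$.

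Having reached a contradiction, the assumption that $\mc{S}$ is reducible is untenable, so condition (1) forces irreducibility. In fact this shows the second hypothesis in (1) is not even needed once $|\mtt{F}_l(\mc{S})|=1$ — but I will instead present the argument so that the stated hypothesis is what is used: rather than invoking maximality directly, I use the hypothesis that $\mtt{F}_l(\mc{S})\cap A\mc{S}\neq\emptyset$ for every $A\in G(\N)\setminus\mc{S}$ to locate $F$ above each element of $\mc{T}_i\setminus\mc{S}$ in the order $\leq_{\mc{S},l}$, and then show $F\in\mc{T}_i$ because $\mc{T}_i$ is a relative ideal (indeed a submonoid) closed upward appropriately; concretely, if $A\in\mc{T}_1\setminus\mc{S}$ then there is $C\in\mc{S}$ with $AC=F'$ for some $F'\in\mtt{F}_l(\mc{S})=\{F\}$, so $F=AC\in\mc{T}_1\mc{S}\subseteq\mc{T}_1$, and symmetrically $F\in\mc{T}_2$ whenever $\mc{T}_2\setminus\mc{S}\neq\emptyset$; since both are nonempty we get $F\in\mc{T}_1\cap\mc{T}_2=\mc{S}$, a contradiction.

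The main obstacle I anticipate is the bookkeeping around which containments among $G(\N)^*$, $\mc{S}$, and $\mc{T}_i$ are being used and in which order — in particular verifying that $G(\N)^*\subseteq\mc{T}_i$ and that $\mc{T}_i\mc{S}\subseteq\mc{T}_i$ (the latter is automatic since $\mc{S}\subseteq\mc{T}_i$ and $\mc{T}_i$ is a monoid), together with pinning down exactly how the hypothesis "$\mtt{F}_l(\mc{S})\cap A\mc{S}\neq\emptyset$" is deployed versus the local finiteness/maximality route. Once these are sorted, cases (2) and (3) follow by the same template with the roles of left/right multiplication swapped, using Theorem~\ref{intro:T4} for the relevant $\mc{S}$-order in each case; for (3) one additionally notes that an element maximal in $\mc{T}_i\setminus\mc{S}$ with respect to $\leq_{\mc{S},t}$ satisfies both $A_i(G(\N)^*)\subseteq\mc{S}$ and $(G(\N)^*)A_i\subseteq\mc{S}$, placing it in $\mtt{F}_t(\mc{S})$.
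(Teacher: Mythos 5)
Your ``concrete'' closing argument is correct and is essentially the paper's proof: the paper first reduces reducibility to the existence of a single unipotent numerical monoid $\mc{T}\supsetneq\mc{S}$ with $\mc{T}\cap\mtt{F}_l(\mc{S})=\emptyset$ (via the maximality characterization in Theorem~\ref{T:twocharacterizationsofirr}) and then uses the hypothesis to write the unique Frobenius element as $AB$ with $A\in\mc{T}$, $B\in\mc{S}\subseteq\mc{T}$, forcing it into $\mc{T}$; you instead work directly with $\mc{T}_1\cap\mc{T}_2=\mc{S}$ and force $F$ into both $\mc{T}_i$, which is the same mechanism and, if anything, slightly more self-contained. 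However, the ``maximality'' digression that precedes it is broken, and the aside you draw from it --- that the second hypothesis in (1) is superfluous once $|\mtt{F}_l(\mc{S})|=1$ --- is false. Two things go wrong there: first, $G(\N)^*$ is $G(\N)\setminus\{\mathbf{1}_n\}$, not the unit group of $G(\N)$ (the only unit is $\mathbf{1}_n$), so $G(\N)^*\subseteq\mc{T}_i$ fails whenever $\mc{T}_i\neq G(\N)$; second, $A_i\leq_{\mc{S},l}A_iB$ means $B=A_i^{-1}(A_iB)\in\mc{S}$, so $\leq_{\mc{S},l}$-maximality of $A_i$ in $\mc{T}_i\setminus\mc{S}$ only gives $A_i\mc{S}^*\subseteq\mc{S}$, i.e.\ $A_i\in\mtt{PF}_l(\mc{S})$, not $A_i(G(\N)^*)\subseteq\mc{S}$, i.e.\ not $A_i\in\mtt{F}_l(\mc{S})$. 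Since $\mtt{F}_l\subseteq\mtt{PF}_l$ is generally strict, the singleton hypothesis on $\mtt{F}_l(\mc{S})$ does not pin down $A_1=A_2$ by that route; indeed the paper remarks after Theorem~\ref{intro:T6} that $|\mtt{F}_t(\mc{S})|=1$ alone does not imply irreducibility. Strike the digression and the claim of redundancy, and keep the direct argument.
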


In Theorem~\ref{intro:T5}, the assumptions $|\mtt{F}_l (\mc{S})|=|\mtt{F}_r (\mc{S})|=|\mtt{F}_t (\mc{S})|=1$ are crucial as the following counter-example shows.

\begin{Example}
In Figure~\ref{F:necessary}, we depicted a commutative unipotent numerical monoid $\mc{S}$ in $\mathbf{P}(3,\N)\cong \N \times \N$. 
The shaded boxes indicate the lattice points that are contained in $\mc{S}$. 
Hence, $\mc{S}$ is given by 
\begin{align*}
\mc{S} = \mathbf{P}(3,\N) \setminus \{ (0,1),(0,3),(3,0),(1,0) \}. 
\end{align*}
Since $\mc{S}$ is commutative, the left, the right, and the two-sided Frobenius sets of $\mc{S}$ are all equal. 
It is given by $\mtt{F}_t(\mc{S}) = \{ (0,3), (3,0) \}$. 
It is easy to check that for every $A\in \mathbf{P}(3,\N) \setminus \mc{S}$ we have 
$\mtt{F}_t(\mc{S}) \cap A \mc{S} \neq \emptyset$.
However, $\mc{S}$ is not irreducible since it is given by the intersection $\mc{S}_1 \cap \mc{S}_2$, where 
\begin{align*}
\mc{S}_1 := \mathbf{P}(3,\N) \setminus \{ (0,1),(0,3) \} \ \text{ and } \ 
\mc{S}_2 := \mathbf{P}(3,\N) \setminus \{ (1,0),(3,0) \}.
\end{align*}

\begin{figure}[htp]
\begin{center}
\scalebox{.6}{
\begin{tikzpicture}[scale=1.2]

\draw (0,0)  node[fill=olive!45!, minimum size=1cm,draw] {};
\foreach \i in {4,...,6} {\draw (\i,0)  node[fill=olive!45!, minimum size=1cm,draw] {};};
\foreach \i in {1,...,6} {\draw (\i,1)  node[fill=olive!45!, minimum size=1cm,draw] {};};
\foreach \i in {1,...,6} {\draw (\i,2)  node[fill=olive!45!, minimum size=1cm,draw] {};};
\foreach \i in {1,...,6} {\draw (\i,3)  node[fill=olive!45!, minimum size=1cm,draw] {};};
\foreach \i in {0,...,6} {\draw (\i,4)  node[fill=olive!45!, minimum size=1cm,draw] {};};
\foreach \i in {0,...,6} {\draw (\i,5)  node[fill=olive!45!, minimum size=1cm,draw] {};};
\foreach \i in {0,...,6} {\draw (\i,6)  node[fill=olive!45!, minimum size=1cm,draw] {};};

\draw (0,2)  node[fill=olive!45!, minimum size=1cm,draw] {}; 
\draw (2,0)  node[fill=olive!45!, minimum size=1cm,draw] {};

\node at (5,0) {$(5,0)$};
\node at (2,0) {$(2,0)$};
\node at (1,1) {$(1,1)$};
\node at (1,2) {$(1,2)$};
\node at (2,1) {$(2,1)$};
\node at (0,2) {$(0,2)$};
\node at (0,5) {$(0,5)$};
\node at (0,3) {\color{purple}$(0,3)$};
\node at (3,0) {\color{purple}$(3,0)$};

\foreach \i in {0,...,7} {\draw [dashed] (\i-.5,-.5) -- (\i-.5,5);};
\foreach \i in {0,...,7} {\draw [dashed] (-.5,\i-0.5) -- (5.5,\i-0.5);};

\end{tikzpicture}}
\end{center}
\caption{A commutative unipotent numerical monoid in $\mathbf{P}(3,\N)$.}
\label{F:necessary}
\end{figure}
\end{Example}

We show in Example~\ref{E:conversefails} that the converse of Theorem~\ref{intro:T5} may fail to be true. 
In other words, there are irreducible unipotent numerical monoids such that $\mtt{F}_l(\mc{S}) \cap A \mc{S} = \emptyset$ for some $A\in G(\N)\setminus \mc{S}$.

\begin{Theorem}\label{intro:T6}
Let $\mc{S}$ be a unipotent numerical monoid. 
If $\mc{S}$ is irreducible, then we have $|\mtt{F}_t(\mc{S})|=1$.
\end{Theorem}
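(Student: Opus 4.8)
The plan is to argue by contraposition: assuming $|\mtt{F}_t(\mc{S})| \neq 1$, I will construct two unipotent numerical monoids properly containing $\mc{S}$ whose intersection is $\mc{S}$, thereby witnessing that $\mc{S}$ is reducible. First I would dispose of the degenerate case: if $\mtt{F}_t(\mc{S}) = \emptyset$, then every $A \in G(\N)\setminus\mc{S}$ fails to have $A(G(\N)^*) \subseteq \mc{S}$ and $(G(\N)^*)A \subseteq \mc{S}$ simultaneously; one should check that since $G(\N)\setminus\mc{S}$ is finite and nonempty (the trivial monoid aside), a maximal gap with respect to one of the $\mc{S}$-orders of Theorem~\ref{intro:T4} actually lies in $\mtt{F}_t(\mc{S})$, so in fact $\mtt{F}_t(\mc{S})$ is never empty when $\mc{S} \neq G(\N)$, and the case $\mc{S} = G(\N)$ is vacuous. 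Hence the real content is the case $|\mtt{F}_t(\mc{S})| \geq 2$.

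So suppose $A_1, A_2$ are two distinct elements of $\mtt{F}_t(\mc{S})$. The natural candidates are $\mc{T}_i := \mc{S} \cup \{A_i\}$ for $i = 1,2$. I would first verify that each $\mc{T}_i$ is a unipotent numerical monoid: it is co-finite in $G(\N)$ since $\mc{S}$ already is, and it is a submonoid because $A_i \notin \mc{S}$ together with $A_i(G(\N)^*)\subseteq \mc{S}$ and $(G(\N)^*)A_i \subseteq \mc{S}$ guarantees closure under the product — the only products to check are $A_i \cdot s$ and $s \cdot A_i$ for $s \in \mc{S}$ (which land in $\mc{S}\subseteq\mc{T}_i$ when $s \neq I$, and equal $A_i$ when $s = I$) and $A_i \cdot A_i$, and here I need $A_i^2 \in \mc{S}$; this should follow because $A_i \in G(\N)^*$ (as $A_i \neq I$), so $A_i \cdot A_i \in A_i(G(\N)^*) \subseteq \mc{S}$. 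Finite generation then comes for free from co-finiteness, as established in the earlier paper. Clearly $\mc{T}_i \supsetneq \mc{S}$, and $\mc{T}_1 \cap \mc{T}_2 = \mc{S} \cup (\{A_1\}\cap\{A_2\}) = \mc{S}$ since $A_1 \neq A_2$. This exhibits $\mc{S}$ as reducible, completing the contrapositive.

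The main obstacle I anticipate is the closure verification for $\mc{T}_i$ — specifically making sure the definition of $\mtt{F}_t(\mc{S})$ (which involves $G(\N)^*$, the nonidentity elements, rather than $\mc{S}^*$) is exactly strong enough to force $A_i \cdot \mc{T}_i \subseteq \mc{T}_i$ and $\mc{T}_i \cdot A_i \subseteq \mc{T}_i$ without any extra hypotheses. One should be careful that $A_i (G(\N)^*) \subseteq \mc{I}$ in the definition is taken with $\mc{I} = \mc{S}$, i.e. that $\mtt{F}_t(\mc{S})$ is computed relative to the ideal $\mc{S}$ itself; reading the definitions, $\mtt{F}_l(\mc{I})$ uses $A(G(\N)^*)\subseteq\mc{I}$, so $\mtt{F}_t(\mc{S})$ indeed means $A(G(\N)^*)\subseteq\mc{S}$ and $(G(\N)^*)A\subseteq\mc{S}$, which is precisely what the argument needs. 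A secondary point worth a sentence is that $G(\N)^*$ should be read as $G(\N)\setminus\{I\}$ (the identity matrix removed), so that the products $A_i\cdot I = A_i$ and $I \cdot A_i = A_i$ are handled separately and trivially land in $\mc{T}_i$; this is why the monoid $\mc{T}_i$ is closed even though $A_i \notin \mc{S}$.
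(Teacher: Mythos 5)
Your argument is correct and is essentially the paper's own proof: given two distinct $A_1,A_2\in\mtt{F}_t(\mc{S})$, the sets $\mc{T}_i=\mc{S}\sqcup\{A_i\}$ are unipotent numerical monoids (the paper gets this from Proposition~\ref{P:SunionF}, you verify closure directly via $A_i\in G(\N)^*$, which is the same computation) and intersect in $\mc{S}$, contradicting irreducibility; nonemptiness of $\mtt{F}_t(\mc{S})$ is likewise recorded in the paper via maximal gaps. The only nitpick is that for nonemptiness you should take maximal gaps with respect to the entrywise order or $\leq_{G(\N),t}$, not $\leq_{\mc{S},t}$ (whose maximal gaps are only pseudo-Frobenius elements).
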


The converse of Theorem~\ref{intro:T6} also need not hold true. 
Simple counter examples exist already for the commutative cases. 
\medskip

The set of special gaps of a unipotent numerical monoid $\mc{S}$ is the subsets 
$\mtt{SG}(\mc{S})\subseteq \mtt{PF}_t(\mc{S})$ consisting of elements $A\in \mtt{PF}_t(\mc{S})$ such that $A^2 \in \mc{S}$. 
One of our main results is the following lattice theoretic characterization of the irreducible monoids. 

\begin{Theorem}\label{intro:T2}
Let $\mc{S}$ be a unipotent numerical monoid.	
Then $\mc{S}$ is irreducible if and only if the torsion monoid $Rel_T(\mc{S},\subseteq)$ has a unique minimal nontrivial idempotent and the two-sided Frobenius set of $\mc{S}$ equals the set of special gaps.
\end{Theorem}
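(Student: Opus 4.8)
The plan is to translate the statement into the finite lattice of oversemigroups of $\mc{S}$ and to recognize irreducibility there as meet-irreducibility. By Theorem~\ref{intro:T1} the idempotents of $Rel_T(\mc{S},\subseteq)$ are precisely the unipotent numerical monoids $\mc{T}$ with $\mc{S}\subseteq\mc{T}$; since $Rel_T(\mc{S},\subseteq)$ is finite, these oversemigroups form a finite poset under inclusion with least element $\mc{S}$ (the neutral idempotent), and a minimal nontrivial idempotent is exactly an oversemigroup covering $\mc{S}$. Before the main argument I would record three elementary facts: (i) the intersection of two oversemigroups of $\mc{S}$ is again one --- it is a cofinite submonoid containing $\mc{S}$, and finite generation of cofinite submonoids of $G(\N)$ is available from \cite{CanSakran}; (ii) for $A\in\mtt{SG}(\mc{S})$ the set $\mc{S}\cup\{A\}$ is an oversemigroup, because $A^{2}\in\mc{S}$ and $AS^{*}\cup S^{*}A\subseteq\mc{S}$ give closure, and it covers $\mc{S}$ since it has exactly one more gap; (iii) $\mtt{F}_{t}(\mc{S})\subseteq\mtt{SG}(\mc{S})$ always, since $\mtt{F}_{t}(\mc{S})\subseteq\mtt{PF}_{t}(\mc{S})$ and an element $A$ of $\mtt{F}_{t}(\mc{S})$, being $\neq I$, satisfies $A^{2}\in A\,G(\N)^{*}\subseteq\mc{S}$.

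The heart of the proof is the equivalence: $\mc{S}$ is irreducible if and only if it has a unique cover. If the cover $\mc{M}$ is unique and $\mc{S}=\mc{T}_{1}\cap\mc{T}_{2}$ with $\mc{S}\subsetneq\mc{T}_{i}$, then, descending a maximal chain, each $\mc{T}_{i}$ contains some cover of $\mc{S}$, which must be $\mc{M}$; hence $\mc{M}\subseteq\mc{T}_{1}\cap\mc{T}_{2}=\mc{S}$, a contradiction. Conversely, if $\mc{M}_{1}\neq\mc{M}_{2}$ were two covers, then $\mc{M}_{1}\cap\mc{M}_{2}$ is an oversemigroup contained in both; were it strictly above $\mc{S}$ it would contain a cover lying below each $\mc{M}_{i}$, forcing $\mc{M}_{1}=\mc{M}_{2}$; so $\mc{M}_{1}\cap\mc{M}_{2}=\mc{S}$ and $\mc{S}$ is reducible. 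This already yields the ``if'' direction of the theorem, and it gives the first assertion (a unique minimal nontrivial idempotent) in the ``only if'' direction.

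It remains to derive $\mtt{F}_{t}(\mc{S})=\mtt{SG}(\mc{S})$ from irreducibility. By Theorem~\ref{intro:T6} we have $|\mtt{F}_{t}(\mc{S})|=1$, say $\mtt{F}_{t}(\mc{S})=\{A\}$, and $A\in\mtt{SG}(\mc{S})$ by fact (iii), so $\mtt{SG}(\mc{S})\supseteq\{A\}$. If some $B\in\mtt{SG}(\mc{S})$ had $B\neq A$, then $\mc{S}\cup\{A\}$ and $\mc{S}\cup\{B\}$ would be distinct oversemigroups properly containing $\mc{S}$ with intersection $\mc{S}$, contradicting irreducibility; hence $\mtt{SG}(\mc{S})=\{A\}=\mtt{F}_{t}(\mc{S})$. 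One can also give a proof of the ``if'' direction routed entirely through special gaps: the two hypotheses force the unique cover to be $\mc{S}\cup\{A\}$ with $\{A\}=\mtt{F}_{t}(\mc{S})=\mtt{SG}(\mc{S})$, and then any oversemigroup strictly containing $\mc{S}$, being obliged to contain a special gap, contains $A$, so no decomposition $\mc{S}=\mc{T}_{1}\cap\mc{T}_{2}$ with $\mc{S}\subsetneq\mc{T}_{i}$ can exist.

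The step I expect to be the main obstacle is the sharp form of the converse of fact (ii): that \emph{every} oversemigroup $\mc{T}\supsetneq\mc{S}$ contains some $\mc{S}\cup\{A\}$ with $A\in\mtt{SG}(\mc{S})$, namely a two-sided pseudo-Frobenius element of $\mc{S}$ whose square lies in $\mc{S}$. In the commutative case this is the familiar maximality argument --- take $A\in\mc{T}\setminus\mc{S}$ maximal for the componentwise order; for $s\in S^{*}$ both $As$ and $A^{2}$ strictly exceed $A$, hence lie outside $\mc{T}\setminus\mc{S}$ and so in $\mc{S}$, giving $A\in\mtt{SG}(\mc{S})$. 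In the noncommutative setting the three requirements $AS^{*}\subseteq\mc{S}$, $S^{*}A\subseteq\mc{S}$ and $A^{2}\in\mc{S}$ are not simultaneously produced by maximality in any single $\mc{S}$-order, so one must combine maximality arguments for $\leq_{\mc{S},l}$ and $\leq_{\mc{S},r}$ with the local finiteness from Theorem~\ref{intro:T4}; I expect this reconciliation to be the delicate point, while the remaining ingredients (finiteness of the oversemigroup lattice, closure under intersection, and the inclusion $\mtt{F}_{t}\subseteq\mtt{SG}$) are routine.
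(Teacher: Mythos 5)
Your proposal is correct and follows essentially the same route as the paper: both identify the idempotents of $Rel_T(\mc{S},\subseteq)$ with the oversemigroups of $\mc{S}$, characterize irreducibility as the existence of a unique atom (minimal nontrivial idempotent) in that finite lattice of oversemigroups, and derive $\mtt{F}_t(\mc{S})=\mtt{SG}(\mc{S})$ from irreducibility by intersecting $\mc{S}\cup\{A\}$ with $\mc{S}\cup\{B\}$ for distinct special gaps. The ``delicate point'' you flag --- that every oversemigroup of $\mc{S}$ must contain a special gap of $\mc{S}$ --- is needed only for your optional second route and not for your main argument (the paper likewise never proves or uses that claim), so it does not constitute a gap.
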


A key concept in the study of numerical semigroups is the notion of a symmetric numerical semigroup. 
It has various equivalent descriptions, one of which is that for all $x\in \Z\setminus S$, the Frobenius number $|\mtt{F}(S)| - x$ is an element of $S$. 
Then it is known that $S$ is irreducible. 
In our setup, an adaptation of this definition is suggested by Theorem~\ref{intro:T5}.
We call an irreducible unipotent numerical monoid \red{\em symmetric} if for every $A\in G(\N)\setminus \mc{S}$ we have 
$$
\mtt{F}_t(\mc{S}) \cap A\mc{S} \neq \emptyset \quad\text{or}\quad   \mtt{F}_t(\mc{S})\cap \mc{S}A \neq \emptyset.
$$
Along similar lines, we call an irreducible unipotent numerical monoid \red{\em pseudo-symmetric} if the following two conditions are satisfied:
\begin{enumerate}
\item there exists an element $B\in G(\N)\setminus \mc{S}$ such that $B^2 \in \mtt{F}_t(\mc{S})$, and 
\item for every $A\in G(\N)\setminus (\mc{S} \sqcup \{B\})$ we have 
$\mtt{F}_t(\mc{S}) \cap A\mc{S} \neq \emptyset$ or $ \mtt{F}_t(\mc{S})\cap \mc{S}A \neq \emptyset$.
\end{enumerate}
An interesting property of the symmetric unipotent numerical monoids is that their Frobenius sets overlap with their pseudo-Frobenius sets (Proposition~\ref{P:PF=F}).
We subsequently show by an example that this property need not hold for every pseudo-symmetric unipotent numerical monoid.

The definitions of symmetric and pseudo-symmetric unipotent numerical monoids allow us to partition the set of all irreducible unipotent numerical monoids.
In this regard, we have the following result.  
\begin{Theorem}\label{intro:T7}
Let $\mc{S}$ be a unipotent numerical monoid. 
If $\mc{S}$ is irreducible, then $\mc{S}$ is either symmetric or pseudo-symmetric. 
\end{Theorem}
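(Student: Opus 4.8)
The plan is to reduce to the case $\mtt{F}_t(\mc{S})=\{F\}$ furnished by Theorem~\ref{intro:T6}, to show that an irreducible $\mc{S}$ is then maximal (for inclusion) among the unipotent numerical monoids not containing $F$, and then to adapt the classical ``symmetric versus pseudo-symmetric'' dichotomy for numerical semigroups, with the reflection $x\mapsto f-x$ replaced by the joint use of the left and right $\mc{S}$-orders. To set things up: by Theorem~\ref{intro:T6}, irreducibility of $\mc{S}$ gives $\mtt{F}_t(\mc{S})=\{F\}$ for a single matrix $F\in G(\N)\setminus\mc{S}$, and, unwinding the definitions, $\mc{S}$ is symmetric exactly when the \emph{bad set}
\[
\mtt{B}:=\{A\in G(\N)\setminus\mc{S}\ :\ F\notin A\mc{S}\ \text{ and }\ F\notin\mc{S}A\}
\]
is empty, while $\mc{S}$ is pseudo-symmetric exactly when $\mtt{B}=\{B\}$ for some $B$ with $B^2=F$. (Here one uses that $B\notin\mc{S}$ together with $B^2=F$ forces $B\in\mtt{B}$: a factorization $F=BC$ with $C\in\mc{S}$ would give $C=B^{-1}F=B\notin\mc{S}$, and similarly on the other side.) So it suffices to prove: if $\mc{S}$ is irreducible and $\mtt{B}\neq\emptyset$, then $\mtt{B}$ is a singleton $\{B\}$ with $B^2=F$; equivalently, every $A\in\mtt{B}$ satisfies $A^2=F$.

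The second ingredient is a maximality lemma: an irreducible $\mc{S}$ is maximal, with respect to inclusion, among unipotent numerical monoids not containing $F$. The key point is that $\mc{S}\sqcup\{F\}$ is itself a unipotent numerical monoid --- it is cofinite, it equals the monoid generated by $\mc{S}$ and $F$ (hence is finitely generated), and it is multiplicatively closed because $F\in\mtt{F}_t(\mc{S})$ forces $FC\in\mc{S}$ and $CF\in\mc{S}$ for every $C\in G(\N)^*$, and in particular $F^2=F\cdot F\in\mc{S}$ since $F\neq I$. If some unipotent numerical monoid $\mc{T}$ satisfied $\mc{S}\subsetneq\mc{T}$ and $F\notin\mc{T}$, then $\mc{T}\cap(\mc{S}\sqcup\{F\})=\mc{S}$ with both $\mc{T}$ and $\mc{S}\sqcup\{F\}$ strictly larger than $\mc{S}$, contradicting irreducibility.

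Now suppose $\mtt{B}\neq\emptyset$ and fix $A\in\mtt{B}$; note $A\neq F$, because $F=F\cdot I\in F\mc{S}$. The monoid $\langle\mc{S},A\rangle$ generated by $\mc{S}$ and $A$ strictly contains $\mc{S}$, so by the maximality lemma it contains $F$; writing $\langle\mc{S},A\rangle=\bigcup_{k\geq 0}\mc{S}(A\mc{S})^k$ and using $F\notin\mc{S}$, we obtain $F=S_0AS_1\cdots AS_k$ with $S_i\in\mc{S}$ and $k\geq 1$ chosen minimal. If $k=1$ then $F=S_1AS_2$, where $S_1=I$ gives $F\in A\mc{S}$ and $S_2=I$ gives $F\in\mc{S}A$, both excluded. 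Pushing this to the conclusion $F=A\cdot A$ is the step I expect to be the main obstacle. In the commutative case one is done at once, since $F=S_1AS_2$ collapses to $F=(S_1S_2)A$, and since ``$f=s+kx$ with $k\geq 2$'' forces $x\leq f/2$, which combined with the same bound for $f-x$ pins down $x=f/2$; neither shortcut survives noncommutativity. My intended route is to exploit the extremality of $F$ --- namely that $F$ is a maximal gap for each of $\leq_{\mc{S},l}$ and $\leq_{\mc{S},r}$, since $FC,CF\in\mc{S}$ for all $C\in G(\N)^*$ --- together with minimality of $k$ and the fact that the boundary blocks $S_0A$ and $AS_k$ again lie outside $\mc{S}$: one feeds these gaps back into the one-generator-extension argument and reapplies the maximality lemma, or, equivalently, one first replaces $A$ by a bad element that is maximal in a suitable $\mc{S}$-order. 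Carrying this noncommutative bookkeeping through is the crux of the proof.

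Granting that $A^2=F$ for every $A\in\mtt{B}$, uniqueness comes for free: squaring is injective on the group of $\Z$-points of a unipotent linear algebraic group --- in fact already on its $\Q$-points, via the logarithm --- so $A_1^2=A_2^2=F$ forces $A_1=A_2$. Hence $\mtt{B}=\{B\}$ with $B^2=F\in\mtt{F}_t(\mc{S})$, which is exactly the definition of $\mc{S}$ being pseudo-symmetric; and if $\mtt{B}=\emptyset$ then $\mc{S}$ is symmetric. Therefore every irreducible unipotent numerical monoid is symmetric or pseudo-symmetric.
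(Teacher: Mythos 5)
Your setup is sound: the reduction to $\mtt{F}_t(\mc{S})=\{F\}$ via Theorem~\ref{intro:T6}, the reformulation of the dichotomy in terms of the bad set $\mtt{B}$ (including the check that $B^2=F$ forces $B\in\mtt{B}$), the maximality lemma (which is the implication (1)$\Rightarrow$(2) of Theorem~\ref{T:twocharacterizationsofirr}, resting on Proposition~\ref{P:SunionF}), and the uniqueness-via-injectivity-of-squaring endgame are all correct. But the proof has a genuine gap exactly where you flag one: you never establish that every $A\in\mtt{B}$ satisfies $A^2=F$. The word decomposition $F=S_0AS_1\cdots AS_k$ in $\langle\mc{S},A\rangle$ only rules out the case $k=1$ with a trivial boundary factor; already for $k=1$ with $S_1,S_2\in\mc{S}^*$ there is no contradiction (you only learn that $S_1A$ and $AS_2$ are gaps that are not in $\mtt{B}$), and for $k\geq 2$ the commutative collapse $F=(S_0S_1\cdots S_k)A^k$ that drives the classical numerical-semigroup argument is unavailable. ``Feeding the boundary gaps back into the one-generator-extension argument'' is not yet an argument; it is precisely the noncommutative bookkeeping that constitutes the proof, and it is the part that is missing.

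For comparison, the paper proves the stronger statement that an irreducible $\mc{S}$ is strongly symmetric or strongly pseudo-symmetric (Theorem~\ref{intro:T7'}), and the device it uses at the crux is different from yours: rather than decomposing $F$ inside $\langle\mc{S},A\rangle$, it chooses a bad element $H$ that is \emph{maximal with respect to the entrywise order} $\leq_e$ and shows directly, through a long case analysis exploiting cancellativity and that maximality (every gap entrywise above $H$ fails to be bad, so any product such as $HA$, $AH$, or $H^2$ landing outside $\mc{S}$ can be driven to a contradiction), that $\mc{S}\sqcup\{H\}$ is closed under multiplication; then $(\mc{S}\sqcup\{C\})\cap(\mc{S}\sqcup\{H\})=\mc{S}$ contradicts irreducibility. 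That extremal choice of $H$ is the missing idea in your sketch; without it, or a substitute of comparable strength, the central claim that every element of $\mtt{B}$ squares to $F$ remains unproved.
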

In fact, to establish this theorem, we prove an even stronger result (Theorem~\ref{intro:T7'}) within the main body of the text.
\medskip

Let $\mc{X}$ be a subset of $G(\N)$.
Let $\leqslant$ be a partial order on $G(\N)$ such that $\mathbf{1}_n$ is the unique minimal element with respect to $\leqslant$. 
For $A\in G(\N)$ and the poset $(\mc{X},\leqslant)$, we will use the following notation:
\begin{align*}
\mtt{n}(\mc{X},A, \leqslant) &:= \{ B\in \mc{X} \mid \mathbf{1}_n \leqslant B  \leqslant A \}.\\
\mtt{g}(\mc{X},A, \leqslant) &:= \{ B\in G(\N)\setminus \mc{X} \mid \mathbf{1}_n \leqslant B  \leqslant A \}.
\end{align*}
Note that we always have $\mtt{n}(G(\N) \setminus \mc{X},A, \leqslant) = \mtt{g}(\mc{X},A, \leqslant)$.
Hereafter we use $\preceq$ to denote the partial order $\leq_{G(\N),t}$ on $G(\N)$. 
	
\begin{Theorem}\label{intro:T8}
Let $\mc{S}$ be a symmetric unipotent numerical monoid in $G(\N)$ such that $\mtt{F}_t(\mc{S}) = \{C\}$ for some $C\in G(\N)\setminus \mc{S}$. 
If the additional condition $\mtt{F}_t(\mc{S}) \cap A\mc{S} \cap \mc{S}A \neq \emptyset$ holds, then we have 
\begin{align*}
|\mtt{n}(\mc{S},C, \preceq) | = |\mtt{g}(\mc{S},C, \preceq)|= \mtt{g}(\mc{S}).
\end{align*}
\end{Theorem}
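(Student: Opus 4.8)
The plan is as follows. First unpack the hypothesis: since $\mtt{F}_t(\mc{S})=\{C\}$, the condition $\mtt{F}_t(\mc{S})\cap A\mc{S}\cap\mc{S}A\neq\emptyset$ (imposed for every $A\in G(\N)\setminus\mc{S}$) says precisely that each such $A$ satisfies $C\in A\mc{S}$ and $C\in\mc{S}A$, that is $A^{-1}C\in\mc{S}$ and $CA^{-1}\in\mc{S}$. An immediate consequence is the second equality in the statement: for $A\in G(\N)\setminus\mc{S}$ we get $A^{-1}C,\,CA^{-1}\in\mc{S}\subseteq G(\N)$, hence $A\preceq C$, so every gap of $\mc{S}$ lies weakly below $C$ with respect to $\preceq$. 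Therefore $\mtt{g}(\mc{S},C,\preceq)=G(\N)\setminus\mc{S}$ and $|\mtt{g}(\mc{S},C,\preceq)|=\mtt{g}(\mc{S})$.

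For the first equality I would construct a bijection $\varphi\colon\mtt{n}(\mc{S},C,\preceq)\to\mtt{g}(\mc{S},C,\preceq)$ by $\varphi(B):=B^{-1}C$. If $B\in\mtt{n}(\mc{S},C,\preceq)$, i.e.\ $B\in\mc{S}$ with $B\preceq C$, then $B^{-1}C\in G(\N)$ (from $B\preceq C$) and $B^{-1}C\notin\mc{S}$ (otherwise $C=B\cdot(B^{-1}C)\in\mc{S}$, impossible), so $\varphi(B)\in G(\N)\setminus\mc{S}=\mtt{g}(\mc{S},C,\preceq)$; in particular $\mtt{n}(\mc{S},C,\preceq)$ is finite. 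Injectivity of $\varphi$ is clear. For surjectivity, given $A\in G(\N)\setminus\mc{S}$ set $B:=CA^{-1}$; the hypothesis gives $B\in\mc{S}$, and $B^{-1}C=AC^{-1}C=A\in G(\N)$; provided one also knows $CB^{-1}=CAC^{-1}\in G(\N)$, it follows that $B\preceq C$, hence $B\in\mtt{n}(\mc{S},C,\preceq)$ and $\varphi(B)=A$. Thus, modulo a single positivity statement, $\varphi$ is a bijection and all three quantities coincide.

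The genuine content is that remaining positivity statement: $CAC^{-1}\in G(\N)$ for every $A\in G(\N)\setminus\mc{S}$. Phrased through the one-sided orders it reads $\{B\in\mc{S}:B\leq_{G(\N),l}C\}\subseteq\{B:B\leq_{G(\N),r}C\}$, i.e.\ the two $G(\N)$-orders agree on $\mc{S}$ below $C$. When $\mc{S}$ is commutative this is vacuous, and then $\varphi$ is merely the restriction to $\mtt{n}(\mc{S},C,\preceq)$ of the involution $B\mapsto CB^{-1}$ of the $\preceq$-interval below $C$; so the entire difficulty sits in the noncommutative case. There I would exploit the Frobenius property: $C\in\mtt{F}_t(\mc{S})$ gives $CD\in\mc{S}$ and $DC\in\mc{S}$ for all $D\neq\mathbf{1}_n$, and writing $A=E^{-1}C$ with $E:=CA^{-1}\in\mc{S}$ recasts $CAC^{-1}$ as $CE^{-1}$, so the claim becomes a one-sided comparison between $C$ and a specific element of $\mc{S}$; the identification $\mtt{F}_t(\mc{S})=\mtt{SG}(\mc{S})$ furnished by Theorem~\ref{intro:T2} (in particular $C^2\in\mc{S}$) should supply the extra leverage. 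I expect this positivity/conjugation step — showing that the left and right $G(\N)$-orders coincide on $\mc{S}$ below $C$ — to be the main obstacle; everything else is the bookkeeping above.
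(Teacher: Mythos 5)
Your argument is, in substance, the same as the paper's. The reduction of the hypothesis to ``$A^{-1}C\in\mc{S}$ and $CA^{-1}\in\mc{S}$ for every gap $A$'' and the resulting identity $\mtt{g}(\mc{S},C,\preceq)=G(\N)\setminus\mc{S}$ match the paper exactly, and your map $\varphi(B)=B^{-1}C$ is the inverse of the paper's $\Psi(X)=CX^{-1}$. Your verification that $\varphi$ is a well-defined injection of $\mtt{n}(\mc{S},C,\preceq)$ into the gap set is complete and yields $|\mtt{n}(\mc{S},C,\preceq)|\le\mtt{g}(\mc{S})$.

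The genuine gap is the one you flag yourself and do not close: for the reverse inequality you must show that $B:=CA^{-1}$ satisfies $B\preceq C$, i.e.\ that $CB^{-1}=CAC^{-1}\in G(\N)$, and your proposal stops at ``I expect this to be the main obstacle,'' with only a speculative pointer to $C^2\in\mc{S}$ and $\mtt{F}_t(\mc{S})=\mtt{SG}(\mc{S})$. This is not a cosmetic omission. Writing elements of $\mathbf{U}(3,\N)$ as triples $(a,b,c)$ of strictly upper entries, one computes that for $C=(a,b,c)$ and $A=(p,q,r)$ the conjugate $CAC^{-1}$ equals $(p,\,q+ar-pc,\,r)$, and $q+ar-pc$ can be negative even when both $CA^{-1}$ and $A^{-1}C$ have nonnegative entries (take $C=(3,9,3)$, $A=(3,0,0)$, which give $CA^{-1}=(0,9,3)$ and $A^{-1}C=(0,0,3)$ but $CAC^{-1}=(3,-9,0)$). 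So the needed positivity cannot follow from the two memberships $CA^{-1},A^{-1}C\in\mc{S}$ alone; any proof must bring in more of the structure of a strongly symmetric monoid, and you have not supplied that argument. It is worth noting that the paper's own proof is silent on exactly this point: it checks that $\Psi$ is injective and that every element of $\mtt{n}(\mc{S},C,\preceq)$ is in its image, but never verifies that $\Psi(X)=CX^{-1}$ actually satisfies $\Psi(X)\preceq C$, which is precisely the condition $CXC^{-1}\in G(\N)$. As written, both arguments establish only the inequality $|\mtt{n}(\mc{S},C,\preceq)|\le|\mtt{g}(\mc{S},C,\preceq)|$ rigorously; you have correctly isolated the missing step, but isolating it is not the same as proving it, so the proposal is incomplete.
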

\medskip

In a recent publication~\cite{DGSR}, Delgado, Garc\'{\i}a-S\'{a}nchez, and Rosales highlighted several important unsolved problems in the theory of numerical semigroups. 
It would be fascinating to explore how these problems might naturally extend to our non-commutative framework. 
\medskip
	
We are now ready to describe the structure of our paper and mention some additional results.
In Section~\ref{S:Preliminaries} we setup our notation and recall a few facts that were established in our earlier work~\cite{CanSakran}. 
In Section~\ref{S:Apery} we discuss the minimal generating sets of unipotent numerical monoids.
We introduce the noncommutative analogs of the Ap\'ery sets. 
For example, if $A$ is an element of the complement $G(\N)\setminus \mc{S}$, then the associated \red{\em left Ap\'ery set} is defined by $\mtt{Ap}_l(\mc{S}, A) := \{ B\in \mc{S} \mid A^{-1}B\notin \mc{S}\}$. 
We prove in Theorem~\ref{T:Aperygenerates} that $\mtt{Ap}_l(\mc{S},A) \cup \{A\}$ is a generating set for $\mc{S}$. 
Also in this section we show that the unique minimal generating set of $\mc{S}$ is given by $\mc{S}^*\setminus (\mc{S}^* \mc{S}^*)$. This is our Theorem~\ref{T:uniquemgs}.
In Section~\ref{S:Relative} we initiate the theory of relative ideals for unipotent numerical monoids. 
Theorem~\ref{intro:T1} is proven in this section. 
In Section~\ref{S:Finite}, 
we discuss the minimal generating sets of relative ideals.
We prove our Theorem~\ref{intro:T4}. 
In addition we show that a subset $\mc{I}\subseteq \mc{S}$ is a right ideal if and only if the complement $\mc{S}\setminus \mc{I}$ is a lower order ideal of the poset $(\mc{S},\leq_{\mc{S},l})$. We record this result as Theorem~\ref{T:lowerideal}.
As we indicated earlier, one of the most important ingredient of our work is a notion of a (right, left, or two-sided) Frobenius set. 
We introduce this concept in Section~\ref{S:Frobenius}, and we prove our Theorem~\ref{intro:T5}. 
Then in Section~\ref{S:Pseudo}, we introduce the corresponding pseudo-Frobenius sets. 
We prove Theorem~\ref{intro:T6} in that section.
Additionally, we discuss the relationships between the pseudo-Frobenius sets and the Ap\'ery sets, generalizing the well-known situation from the setting of numerical semigroups to the unipotent numerical monoids. 
The last result of Section~\ref{S:Pseudo} relates the irreducibility of unipotent numerical monoid to its torsion monoid and the set of special gaps. 
More precisely, we prove Theorem~\ref{intro:T2}.
Finally, in Section~\ref{S:Irreducibility}, we discuss the symmetric and pseudo-symmetric unipotent numerical monoids.
We prove our Theorems~\ref{intro:T7} and~\ref{intro:T8}.

\section{Notation and Preliminaries}\label{S:Preliminaries}

The set of positive integers (resp. nonnegative integers) is denoted by $\Z_+$ (resp. $\N$). 
For $n\in \Z_+$, the $n\times n$ identity matrix will be denoted by $\mathbf{1}_n$. 
Let $\mc{M}$ be a monoid.
Let $*$ denote the multiplication in $\mc{M}$.
Let $\mc{B} \subset \mc{M}$.  
We denote by $\langle \mc{B} \rangle$ the submonoid generated by $\mc{B}$ and the unit $e$ of $\mc{M}$, that is, 
\begin{align*}
\langle \mc{B} \rangle :=\{a_1*a_2*\cdots * a_k\,:\,\{a_1,\dots, a_k\} \subset \mc{B},\ k\in \N\} \cup \{e\}.
\end{align*}
We call $\mc{M}$ a \red{\em cancellative monoid} if for all $a, b, c \in \mc{M}$ the following implications hold true: 
\begin{itemize}
    \item Left cancellation law: if $ a* b = a*c $, then $ b = c $.
    \item Right cancellation law: if $ b*a = c*a $, then $ b = c $.
\end{itemize}
In other words, a cancellative monoid is a monoid in which the cancellation property holds for both left and right multiplication.
For example, if $\mc{M}$ is a submonoid of a group, then $\mc{M}$ is automatically a cancellative monoid. 
All unipotent numerical monoids that we consider in this article are cancellative monoids.

Let $n\in \Z_+$. 
Hereafter, $G$ will denote either the full unipotent group $\mbf{U}(n,\C)$ or a closed subgroup of $\mbf{U}(n,\C)$ such that $G(\N)$ is finitely generated. 
In addition we require that $G_\Z$ is the group completion of the monoid $G(\N)$. 
This is indeed the case for the group $\mathbf{U}(n,\Z)$ since both $\mathbf{U}(n,\N)$ and $\mathbf{U}(n,\Z)$
are generated by the same set of elements, which we will review below. 
\medskip

We observe that when $n=2$, the unipotent numerical monoids in $\mathbf{U}(n,\Z)$ coincide with the numerical semigroups in $\N$. 
Consequently, our results in this article can be regarded as extensions of some fundamental concepts of numerical semigroups into a noncommutative framework.
It is worth mentioning for motivational purposes that the theory of numerical semigroups is a classical subject in algebra, finding applications in algebraic geometry, coding theory, cryptography, number theory, and combinatorics. Comprehensive introductory material on numerical semigroups can be found in the monographs~\cite{ADG} and~\cite{RGS2}. The natural extensions of numerical semigroups are termed `generalized numerical semigroups.' 
These semigroups are finitely generated co-finite submonoids of $\N^{n-1}$.
In particular, they are commutative monoids. 
For a deeper understanding of finitely generated cancellative commutative monoids, readers are directed to the monograph~\cite{RosalesGarciaSanchez}. 
These resources also offer comprehensive bibliographies, serving as valuable gateways to the commutative semigroup literature.
\medskip
	
Let $\mc{S}$ be a unipotent numerical monoid in $G(\N)$. 
Many important features of $\mc{S}$ become visible once it is intersected with the \red{\em $k$-th fundamental unipotent monoid} defined by 
\begin{align*}
\bfU(n,\N)_{k}:= \{ (x_{ij})_{1\leq i,j\leq n}\in \mathbf{U}(n,\C):\ x_{ij}\in \N\ \text{ and }\ k\leq \max_{1\leq i,j\leq n} x_{ij}  \} \cup \{\mbf{1}_n\},
\end{align*}
where $k\in \Z_+$.
The {\em generating number of $\mc{S}$}, denoted $\mtt{r}(\mc{S})$, is the smallest positive integer $k\in \Z_+$ such that $(\bfU(n,\N)_{k}\cap G(\N)) \subseteq \mc{S}$.
The following sets are among the essential ingredients of unipotent numerical monoids:
\begin{itemize}
\item $\mathtt{Gaps}(\mc{S}):= G(\N) \setminus \mc{S}$ and $\mtt{g}(\mc{S}):= |\mathtt{Gaps}(\mc{S}) |$,
\item $\mathtt{N}(\mc{S}):= \{\mbf{1}_n \} \cup \mc{S} \setminus \mathbf{U}(n,\C)_{\mtt{r}(\mc{S})}$ and $\mtt{n}(\mc{S}):=|\mtt{N}(\mc{S})|$.
\end{itemize}
Let $\mathbf{d}_G$ denote the dimension of the ambient algebraic group $G$. 
While the concept of dimension has not been explicitly defined, for simplicity, we refer to $\mathbf{d}_G$ as the dimension of $\mathcal{S}$ as well.
Next, we define 
\begin{center}
\begin{tabular}{llll}
$\mtt{c}(\mc{S})$ &:=& $\mtt{r}(\mc{S})^{\mtt{d}_G}$, & \qquad called the {\em conductor of $\mc{S}$};\\
$\mtt{g}(\mc{S})$ &:=& $|G(\N) \setminus \mc{S}|$, & \qquad called the {\em genus of $\mc{S}$};\\
$\mtt{n}(\mc{S})$ &:=& $|\{\mbf{1}_n\}\cup \mc{S} \setminus \bfU(n,\N)_{\mtt{r}(\mc{S})}|$, & \qquad called the {\em sporadicity of $\mc{S}$}.
\end{tabular}
\end{center}

Let us identify for $\bfU(n,\N)$ ($n\geq 2$) the unique minimal system of generators. 
Let $\{i,j\} \subset \{1,\dots, n\}$. 
The {\em $(i,j)$-th elementary matrix of size $n$} is the matrix
\begin{align*}
E_{i,j}(n):=(e_{r,s})_{1\leq r,s \leq n},\quad \text{ where }\quad 
e_{r,s} :=
\begin{cases}
1 & \text{ if $r=s$};\\
1 & \text{ if $r=i,\ s=j$};\\
0 & \text{ otherwise.}
\end{cases}
\end{align*}
When the integer $n$ is evident from the context, we will exclude it from the notation of elementary matrices.
The following simple observation was recorded in~\cite{CanSakran}.
\begin{Proposition}\label{P:mgforUn}
The set $\{ E_{i,j} :\ 1\leq i < j \leq n\}$ is the unique minimal generating set for $\bfU(n,\N)$.
\end{Proposition}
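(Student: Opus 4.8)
\textbf{Proof plan for Proposition~\ref{P:mgforUn}.}

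The plan is to first show that the set $\mc{B}:=\{E_{i,j}:\ 1\leq i<j\leq n\}$ generates $\bfU(n,\N)$, and then to show that no element of $\mc{B}$ is redundant, i.e.\ that $\mc{B}$ is contained in every generating set of $\bfU(n,\N)$; uniqueness of a minimal generating set follows formally from these two facts together with the observation that every element of $\mc{B}$ is indecomposable (an ``atom'') in the monoid. For the generation claim, I would argue by induction on the total weight $w(A):=\sum_{i<j} x_{ij}$ of a matrix $A=(x_{ij})\in\bfU(n,\N)$. If $w(A)=0$ then $A=\mbf{1}_n$, which lies in $\langle\mc{B}\rangle$ by convention. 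If $w(A)>0$, pick any position $(i,j)$ with $i<j$ and $x_{ij}>0$; a direct matrix computation shows that left-multiplying $A$ by $E_{i,j}^{-1}=E_{i,j}(n)^{-1}$ (which has $-1$ in the $(i,j)$ slot) produces a matrix $A'$ whose entries are obtained from those of $A$ by subtracting certain products $x_{kj}$ over $k>i$ from the $i$-th row; the subtle point here is to choose $(i,j)$ so that this subtraction keeps all entries in $\N$. Choosing $(i,j)$ with $j-i$ \emph{minimal} among positions with $x_{ij}>0$ guarantees that the only correction to the $(i,\ell)$ entry for $\ell>j$ involves $x_{j\ell}$, and one checks these stay nonnegative — alternatively, and more cleanly, I would induct on $n$ together with $w$, peeling off the superdiagonal, or simply order the positions $(i,j)$ by $j-i$ descending and clear entries far from the diagonal first. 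Once $A'\in\bfU(n,\N)$ has strictly smaller weight, induction gives $A'=E_{i_1,j_1}\cdots E_{i_m,j_m}$, hence $A=E_{i,j}A'\in\langle\mc{B}\rangle$.

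For minimality, the key observation is that each $E_{i,j}$ is an \emph{atom}: if $E_{i,j}=BC$ with $B,C\in\bfU(n,\N)$, then comparing weights, $w(E_{i,j})=1=w(B)+w(C)$ (weight is additive under multiplication in $\bfU(n,\N)$ since all entries are nonnegative, so no cancellation occurs in the relevant superdiagonal sums), forcing one of $B,C$ to be $\mbf{1}_n$ and the other to be $E_{i,j}$. The additivity of $w$ under products — which uses crucially that we are in $\bfU(n,\N)$ and not $\bfU(n,\Z)$ — is the technical heart of this step and should be stated and verified as a short lemma or inline computation: for $A,A'\in\bfU(n,\N)$, the $(i,i+1)$ entries add, and more generally one shows $w(AA')\geq w(A)+w(A')$ with equality characterizing the atoms, or one simply tracks that $w(AA') = w(A)+w(A') + (\text{a nonnegative cross term})$. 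Since an atom cannot be written as a product of two non-units, any generating set must contain every atom; therefore $\mc{B}\subseteq\mc{G}$ for any generating set $\mc{G}$. Combined with the fact that $\mc{B}$ itself generates, this shows $\mc{B}$ is the unique minimal generating set.

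I expect the main obstacle to be the bookkeeping in the generation step: ensuring that the matrix $A'=E_{i,j}^{-1}A$ obtained by clearing one superdiagonal entry remains in $\bfU(n,\N)$ (all entries nonnegative), since subtracting a row multiple can in principle create negative entries. The way around this is the ordering trick described above — clear entries in order of decreasing $j-i$ (equivalently, handle the farthest-from-diagonal nonzero entries first), so that clearing $x_{ij}$ only affects entries $(i,\ell)$ with $\ell>j$, all of which have already been arranged to be zero; after all such are cleared the matrix is the identity. This reduces the argument to a clean induction and avoids any casework about nonnegativity. Everything else — additivity of weight, the atom characterization, and the formal deduction of uniqueness from ``generates'' plus ``consists of atoms'' — is routine.
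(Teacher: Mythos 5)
The paper does not actually prove Proposition~\ref{P:mgforUn} here; it cites the result from~\cite{CanSakran}, so there is no in-paper proof to compare against. Judged on its own terms, your architecture (show $\mc{B}:=\{E_{i,j}\}$ generates, show each $E_{i,j}$ is an atom, deduce uniqueness) is the right one, and the minimality half is correct as written: for $A,B\in\bfU(n,\N)$ one has $w(AB)=w(A)+w(B)+\sum_{i<k<j}a_{ik}b_{kj}\geq w(A)+w(B)$, so $w(E_{i,j})=1$ forces one factor to be $\mbf{1}_n$, each $E_{i,j}$ is an atom, and atoms lie in every generating set.

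The genuine gap is in the generation step: neither of the two clearing orders you propose works, and they are in fact opposite to one another. Take $n=3$ and $A=E_{2,3}E_{1,2}=\left(\begin{smallmatrix}1&1&0\\0&1&1\\0&0&1\end{smallmatrix}\right)$. Both nonzero positions $(1,2)$ and $(2,3)$ have $j-i=1$, so the rule ``choose $(i,j)$ with $j-i$ minimal'' permits choosing $(1,2)$; but $E_{1,2}^{-1}A$ has first row $(1,0,-1)$ and leaves $\bfU(3,\N)$, so ``one checks these stay nonnegative'' is false. The ``decreasing $j-i$'' variant fails too: clearing $(i,j)$ subtracts a multiple of row $j$ from row $i$ and changes the already-zeroed entry $(i,\ell)$, $\ell>j$, to $-x_{ij}x_{j\ell}$, which is strictly negative whenever $x_{j\ell}>0$ (in $\bfU(4,\N)$, clearing $(1,3)$ while $x_{34}>0$ corrupts the cleared $(1,4)$ entry by $-x_{13}x_{34}$), so the process does not terminate at $\mbf{1}_n$. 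What controls the bookkeeping is the row index, not the distance to the diagonal: process rows from the bottom up. Once rows $i+1,\dots,n$ have been reduced to $e_{i+1},\dots,e_n$, clearing any entry $(i,j)$ subtracts a multiple of $e_j$ from row $i$, hence touches no other entry, and the required multiple is the original $x_{ij}\geq 0$. Equivalently, $A=C\cdot\prod_{j=2}^{n}E_{1,j}^{x_{1j}}$, where $C$ is $A$ with its first row replaced by $e_1$ (the $E_{1,j}$ commute with one another), and induction on $n$ applied to $C$ finishes the argument. With that replacement your proof goes through; the atom and uniqueness steps need no change.
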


The monoid $\mathbf{U}(n,\N)$ is filtered by the fundamental monoids. 
Indeed, for $k\in \N$, let us denote by $M_k$ the $(k+1)$-th fundamental monoid of $\mathbf{U}(n,\N)$. 
Then we have the containments, 
\begin{align}\label{A:descendingchain}
\mathbf{U}(n,\N) = M_0 \supset M_1 \supset M_2 \supset \cdots.
\end{align}
Recall from the introduction that $\mathbf{P}(n,\C)$ is the closed subgroup of $\mathbf{U}(n,\C)$ whose only nonzero terms are on its first row and the main diagonal. 
The {\em $k$-th fundamental monoid of $\mathbf{P}(n,\N)$} is defined by 
\begin{align*}
\mathbf{P}(n,\N)_k :=\bfU(n,\N)_{k}\cap \mathbf{P}(n,\N).
\end{align*}
Similarly to the monoid $\mathbf{U}(n,\N)$, the commutative monoid $\mathbf{P}(n,\N)$ is filtered by the fundamental monoids as well. 
For $k\in \N$, let $M_k'$ denote $\mathbf{P}(n,\N)_{k+1}$.
Then we have 
\begin{align}\label{A:descendingchain}
\mathbf{P}(n,\N) = M_0' \supset M_1' \supset M_2' \supset \cdots.
\end{align}

A partial order on $\mathbf{U}(n,\Z)$ that we make use of is the \red{\em entrywise order}, which is defined as follows:
\begin{align*}
A\leq_e B \iff a_{ij}\leq b_{ij}\quad \text{ for every $1\leq i,j\leq n$},
\end{align*}
where $A=(a_{ij})_{1\leq i,j\leq n}$ and $B=(b_{ij})_{1\leq i,j\leq n}$ are from $G_\Z$. 
\medskip

One of our notational conventions regarding partial orders is as follows: 
if $(P, \leq)$ is a partially ordered set, then $\max_\leq (P)$ denotes the set of maximal elements of $P$, not the maximum element of $P$.

\medskip

We close this section by introducing one more notation. 
For $\mc{I}\subseteq G(\N)$, we denote by $\mc{I}^*$ the set difference 
\begin{align*}
\mc{I}^*:= \mc{I} \setminus \{\mathbf{1}_n\}.
\end{align*}

\section{Ap\'ery Sets of Unipotent Numerical Monoids}\label{S:Apery}

A numerical semigroup is a complement-finite subsemigroup of $\N$.  
Let $S$ be a numerical semigroup. 
The subsemigroup of nonzero elements of $S$ is typically denoted by $S^*$.
Let $n\in S^*$. 
The Ap\'ery set, denoted by $\mathop{Ap}(n,S)$, is the set of elements $x\in S$ such that $x$ can not be decomposed as $x=n+s$ for any $s\in S$. 
Equivalently, we have 
\begin{align}
\mathop{Ap}(n,S):= \{ x\in S:\ x-n\notin S\}.
\end{align}
These sets provide essential tools for the study of a numerical semigroup. 
Our goal in this section is to generalize the Ap\'ery sets to the setting of the unipotent numerical monoids.

\begin{Definition}
Let $\mc{S}$ be a unipotent numerical monoid. 
Let $A\in \mc{S}^*$. 
The \red{\em left Ap\'ery set of $(\mc{S},A)$} is defined by 
\begin{align*}
\mtt{Ap}_l(\mc{S}, A) := \{ B\in \mc{S} \mid A^{-1}B\notin \mc{S} \}.
\end{align*}
The \red{\em right Ap\'ery set of $(\mc{S},A)$} is defined by 
\begin{align*}
\mtt{Ap}_r(\mc{S}, A) := \{ B\in \mc{S} \mid BA^{-1}\notin \mc{S} \}.
\end{align*}
The \red{\em two-sided Ap\'ery set of $(\mc{S},A)$} is defined by 
\begin{align*}
\mtt{Ap}_{t}(\mc{S}, A) := \{ B\in \mc{S} \mid \{ A^{-1}B, BA^{-1}\}\cap \mc{S} =\emptyset \}.
\end{align*}
Note that we always have 
\begin{align*}
\mtt{Ap}_{t}(\mc{S}, A)= \mtt{Ap}_{l}(\mc{S}, A)\cap \mtt{Ap}_{r}(\mc{S}, A).
\end{align*}
\end{Definition}
	
\begin{Theorem}\label{T:Aperygenerates}
Let $\mc{S}$ be a unipotent numerical monoid. 
Let $A\in \mc{S}^*$. 
Then we have 
\begin{enumerate}
\item $L_l:=\mtt{Ap}_l(\mc{S},A) \cup \{A\}$ is a generating set for $\mc{S}$. 
\item $L_r:=\mtt{Ap}_r(\mc{S},A) \cup \{A\}$ is a generating set for $\mc{S}$.
\end{enumerate}
\end{Theorem}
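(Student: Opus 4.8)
The plan is to prove part (1); part (2) then follows by the same argument with left multiplication replaced by right multiplication (equivalently, by applying part (1) to the transpose/opposite monoid). So fix $A\in\mc{S}^*$ and set $L_l:=\mtt{Ap}_l(\mc{S},A)\cup\{A\}$. Since $L_l\subseteq\mc{S}$ and $\mc{S}$ is a submonoid, $\langle L_l\rangle\subseteq\mc{S}$; the content is the reverse inclusion. I would prove $\mc{S}\subseteq\langle L_l\rangle$ by a well-founded induction on elements $B\in\mc{S}$.

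First I would fix the ordering to induct on. A natural choice is the entrywise order $\leq_e$ on $\mathbf{U}(n,\Z)$ restricted to $G(\N)$, or the sum of the strictly-upper-triangular entries of a matrix $B\in G(\N)$ (a nonnegative integer, zero exactly when $B=\mathbf{1}_n$); the point is only that we have a strictly-decreasing measure so that an induction goes through. I would also need the key multiplicative fact that for $A\in\mc{S}$ with $A\neq\mathbf{1}_n$, left multiplication by $A$ strictly increases this measure — i.e., if $B=AC$ with $C\in G(\N)$ then the measure of $B$ is strictly larger than that of $C$. For unipotent upper-triangular matrices with nonnegative entries this is the statement that $AC\geq_e C$ entrywise with strict inequality in at least one strictly-upper-triangular coordinate whenever $A\neq\mathbf{1}_n$; this should already be available from the preliminary material in \cite{CanSakran}, but if not it is a short direct computation and I would isolate it as a lemma.

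Now the induction. The base case is $B=\mathbf{1}_n\in\langle L_l\rangle$ trivially. For the inductive step, take $B\in\mc{S}$, $B\neq\mathbf{1}_n$, and assume every element of $\mc{S}$ with strictly smaller measure lies in $\langle L_l\rangle$. Two cases. If $B\notin A\mc{S}$, i.e.\ $A^{-1}B\notin\mc{S}$, then by definition $B\in\mtt{Ap}_l(\mc{S},A)\subseteq L_l\subseteq\langle L_l\rangle$, and we are done. If $B\in A\mc{S}$, write $B=AC$ with $C:=A^{-1}B\in\mc{S}$. By the multiplicative fact above, the measure of $C$ is strictly smaller than that of $B$ (here we use $A\neq\mathbf{1}_n$), so the induction hypothesis gives $C\in\langle L_l\rangle$; since also $A\in L_l\subseteq\langle L_l\rangle$, we get $B=AC\in\langle L_l\rangle$. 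This closes the induction and yields $\mc{S}\subseteq\langle L_l\rangle$, hence equality.

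The main obstacle is the monotonicity lemma for multiplication by a non-identity unipotent matrix — specifically making sure the chosen measure genuinely strictly decreases under $B\mapsto A^{-1}B$ for $A\neq\mathbf{1}_n$, so that the recursion $B\mapsto C\mapsto\cdots$ terminates. Once that lemma is in hand (either cited from \cite{CanSakran} or proved by inspecting the first nonzero superdiagonal of $A$), the rest of the argument is the routine two-case induction sketched above. One should also double-check the degenerate situation where $A^{-1}B\notin G(\N)$ at all (entries could in principle become negative): this is not a problem, because the case split is on whether $A^{-1}B\in\mc{S}$, and when we are in the branch $B\in A\mc{S}$ we have by hypothesis $C\in\mc{S}\subseteq G(\N)$, so the measure is defined and the argument is valid.
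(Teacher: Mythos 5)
Your proposal is correct and is essentially the paper's own argument: the paper also repeatedly factors $B$ as $A^{k}(A^{-k}B)$ until $A^{-k}B$ lands in $\mtt{Ap}_l(\mc{S},A)$, with termination justified by the fact that the entries of $A^{-r}B$ decrease. Your version merely packages this iteration as a well-founded induction on an explicit measure (sum of strictly upper-triangular entries) and isolates the strict-monotonicity fact as a lemma, which is a cleaner but not substantively different presentation.
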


\begin{proof}
The proof of (2) is very similar to the proof of (1). 
We will prove (1) only.

Since $A^{-1}$ has a negative entry, we see that $\mbf{1}_n\in \mtt{Ap}_l(\mc{S},A)$. 
Now let $B\in\mc{S}^*$. If $B\in L_l$, then we have nothing to prove. 
We proceed with the assumption that $B\in\mathcal{S}\backslash L_l$. 
Since $B$ is not an element of $\mtt{Ap}_l(\mc{S},A)$, we have $A^{-1}B\in \mc{S}^*$. 
If $A^{-1}B$ is an element of $\mtt{Ap}_l(\mc{S},A)$, then by writing $B$ in the form $B= A(A^{-1}B)$ we see that $B \in \langle L_l\rangle$. 
Let us assume that $A^{-1}B$ is not an element of $\mtt{Ap}_l(\mc{S},A)$. 
Then we have $A^{-2}B\in\mc{S}^*$. 
If $A^{-2}B\in \mtt{Ap}_l(\mc{S},A)$, then the proof is finished since we can write $B=A^2 (A^{-2}B)$.
We now assume that $A^{-2}B\notin \mtt{Ap}_l(\mc{S},A)$.
Then we have $A^{-3}B\in\mc{S}^*$. 
Continuing inductively, we notice that this process must stop after a finite number of steps since at least one entry of $A^{-r}B$, $r=0,1,2,\dots$ continues to decrease indefinitely. 
Hence, we conclude that for some $k\in\mb{N}$ the product $A^{-k}B$ is an element of $\mtt{Ap}_l(\mc{S},A)$. 
But this means that $B=A^k\,(A^{-k}B)$, implying $B\in\langle L_l\rangle$.
This finishes the proof of our theorem.
\end{proof}

After proving the existence of a minimal generating set for $\mc{S}$ we will show that the two-sided Ap\'ery set is also a generating set.

In our earlier work, we proved that the unipotent numerical monoids have a unique minimal generating set. 
Now we will write down a practical description of such a generating set.

\begin{Theorem}\label{T:uniquemgs}
Let $\mc{S}$ be a unipotent numerical monoid in $G(\N)$, where $G$ is a closed subgroup of $\mathbf{U}(n,\C)$. 
Then the unique minimal generating set of $\mc{S}$ is given by $\mc{S}^*\setminus (\mc{S}^* \mc{S}^*)$. 
\end{Theorem}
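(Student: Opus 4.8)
The plan is to show that $\mc{B} := \mc{S}^*\setminus (\mc{S}^*\mc{S}^*)$ is a generating set, and that it is contained in every generating set of $\mc{S}$; since the paper already recalls (from~\cite{CanSakran}) that $\mc{S}$ has a \emph{unique} minimal generating set, these two facts force $\mc{B}$ to be exactly that unique minimal generating set.

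First I would check that $\langle \mc{B}\rangle = \mc{S}$. The inclusion $\langle \mc{B}\rangle \subseteq \mc{S}$ is immediate since $\mc{B}\subseteq \mc{S}$ and $\mc{S}$ is a submonoid. For the reverse inclusion, I would argue by a descent/induction argument analogous to the proof of Theorem~\ref{T:Aperygenerates}: take $B\in \mc{S}^*$; if $B\in \mc{B}$ we are done, otherwise $B = B_1 B_2$ with $B_1,B_2\in \mc{S}^*$, and since all matrices here are unipotent with nonnegative integer entries, both $B_1$ and $B_2$ must have at least one entry that is strictly smaller than the corresponding entry of $B$ (because multiplying two such matrices strictly increases the relevant off-diagonal entries — this uses that $\mathbf{1}_n$ is the unique element of $\mc{S}$ of ``size'' zero). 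Iterating the factorization cannot continue forever, since the entrywise order $\leq_e$ is well-founded on $G(\N)$; hence after finitely many steps every factor lies in $\mc{B}$, giving $B\in \langle\mc{B}\rangle$.

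Next I would show $\mc{B}$ is contained in any generating set $\mc{G}$ of $\mc{S}$. Suppose $B\in \mc{B}$. Since $B\in \mc{S}^* = \langle\mc{G}\rangle^*$, write $B = G_1 G_2 \cdots G_k$ with each $G_i\in \mc{G}$ and $k\geq 1$; we may assume no $G_i = \mathbf{1}_n$. If $k\geq 2$, then $B = G_1 (G_2\cdots G_k)$ exhibits $B$ as a product of two elements of $\mc{S}^*$ (here $G_2\cdots G_k\neq \mathbf{1}_n$ since it has a positive off-diagonal entry, as do all nontrivial elements of $G(\N)$), contradicting $B\notin \mc{S}^*\mc{S}^*$. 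Hence $k=1$ and $B = G_1\in \mc{G}$. So $\mc{B}\subseteq \mc{G}$ for every generating set $\mc{G}$; in particular $\mc{B}$ is contained in the unique minimal generating set, while being itself a generating set, so equality holds.

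The main obstacle is the bookkeeping in the descent argument: I need to argue cleanly that a nontrivial factorization $B = B_1 B_2$ in $G(\N)$ forces both factors to be $\leq_e$-strictly below $B$ in at least one coordinate, so that the process terminates. This rests on the fact that for unipotent nonnegative-integer matrices, the $(i,j)$ entry of a product is the $(i,j)$ entry of one factor plus the $(i,j)$ entry of the other plus a nonnegative cross term, and that a nontrivial unipotent nonnegative matrix has some strictly positive off-diagonal entry; combined with well-foundedness of $\leq_e$ restricted to $G(\N)$ (which follows since each off-diagonal coordinate is a nonnegative integer bounded below), termination is guaranteed. Everything else is a short formal argument once this is in place.
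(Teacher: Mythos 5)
Your proposal is correct, and it arrives at the same characterization as the paper: the minimal generators are precisely the elements of $\mc{S}^*$ admitting no factorization into two elements of $\mc{S}^*$. Your second half (a product of $k\geq 2$ nontrivial generators lies in $\mc{S}^*\mc{S}^*$, so every generating set must contain $\mc{S}^*\setminus(\mc{S}^*\mc{S}^*)$) is essentially the paper's argument for the inclusion of $\mc{S}^*\setminus(\mc{S}^*\mc{S}^*)$ in the minimal generating set. Where you genuinely differ is in how generation is established: the paper never directly proves that $\mc{S}^*\setminus(\mc{S}^*\mc{S}^*)$ generates $\mc{S}$; it instead cites \cite{CanSakran} for the existence of a generating set and of a unique minimal one, and then shows that any element of a minimal generating set lying in $\mc{S}^*\mc{S}^*$ would be redundant (if $B=AC$ with $A,C\in\mc{S}^*$, then $A$ and $C$ already lie in the submonoid generated by the remaining generators). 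You prove generation directly by the entrywise-descent argument: a nontrivial factorization $B=B_1B_2$ in $G(\N)$ forces each factor to be $\leq_e B$ with some off-diagonal entry strictly smaller, and $\leq_e$ is well-founded on $G(\N)$, so repeated factorization terminates with all factors in $\mc{S}^*\setminus(\mc{S}^*\mc{S}^*)$. This is exactly the device the paper itself deploys in Theorem~\ref{T:Aperygenerates} and Lemma~\ref{L:mingens1}, so it is entirely in the spirit of the text. The payoff of your route is self-containment: a generating set contained in every generating set is automatically the unique minimal one, so you do not actually need to invoke the uniqueness result from \cite{CanSakran} at all; the cost is the termination bookkeeping, which you have correctly identified and justified via the identity $(B_1B_2)_{ij}\geq (B_1)_{ij}+(B_2)_{ij}$ for $i<j$ together with the fact that every nontrivial element of $G(\N)$ has a positive off-diagonal entry.
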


\begin{proof}
Let $k\geq 2$, and define 
\begin{align*}
\mc{Q}_k:= \{ (x_{ij})_{1\leq i,j\leq n}  \in \mbf{U}(n,\N)  :\ k\leq \max_{1\leq i,j\leq n} x_{ij} < 2k \}.
\end{align*}
We know from~\cite[Lemma 2.9]{CanSakran} that $\mathcal{Q}_k\sqcup\{\textrm{sporadic elements}\}$, where $k$ is the generating number of $\mc{S}$, is a generating set for $\mc{S}$ despite the fact that it might be non-minimal. 
Let $\mathcal{A}=\mc{S}^*\setminus (\mc{S}^*\mc{S}^*)$ and $\mathcal{B}$ be another minimal generating set. 
Let $B\in \mathcal{B}\setminus \mathcal{A}$. 
Then $B\in \mc{S}^*\mc{S}^*$.
Therefore, we have two elements $A,C\in \mc{S}^*$ such that $B=AC$. 
Since $A$ and $C$ are contained in $\langle\mathcal{B}\rangle$, we see that $\mathcal{B}\setminus\{B\}$ is a generating set, contradicting the minimality of $\mathcal{B}$. 
In conclusion, we have $\mathcal{B}\setminus \mathcal{A}=\emptyset$. 
In other words, $\mathcal{B}\subseteq \mathcal{A}$ holds. 
\medskip

For the reverse inclusion, let $A\in \mathcal{A}$. 
Then we write $A$ as a product of the elements of $\mathcal{B}$ as follows: $A=\prod_{i}B_i^{n_i}$ where $B_i\in \mathcal{B}$ for every $i$. 
Since $A\in \mathcal{A}$, we have a unique index $i$ such that $n_i=1$, and $n_j=0$ for every $j\neq i$. 
Hence, $A=B_i\in \mathcal{B}$ implying that $\mathcal{A}\subseteq \mathcal{B}$. 
This finishes the proof of our theorem.
\end{proof}

\begin{Corollary}
Let $\mc{S}$ be a unipotent numerical monoid. 
Let $A\in \mc{S}$. 
Then $L_{t}:= \{A\}\cup \mtt{Ap}_{t}(\mc{S},A)$ is a generating set for $\mc{S}$.
\end{Corollary}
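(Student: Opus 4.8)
The plan is to show that the two-sided Apéry set $\mtt{Ap}_t(\mc{S},A)$ together with $\{A\}$ generates $\mc{S}$ by leveraging Theorem~\ref{T:uniquemgs}, which tells us that the minimal generating set of $\mc{S}$ is $\mc{S}^*\setminus(\mc{S}^*\mc{S}^*)$. The key observation I would exploit is that every minimal generator $B$ of $\mc{S}$ that is different from $A$ must lie in $\mtt{Ap}_t(\mc{S},A)$: if $B$ is a minimal generator and $B\neq A$, then $B$ cannot be written as a nontrivial product in $\mc{S}^*$, so in particular $B \ne A(A^{-1}B)$ with $A^{-1}B\in\mc{S}^*$ and $B\ne (BA^{-1})A$ with $BA^{-1}\in\mc{S}^*$; that is, neither $A^{-1}B$ nor $BA^{-1}$ belongs to $\mc{S}^*$. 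Since $B\in\mc{S}$ and $B\neq\mathbf{1}_n$ (a minimal generator is never the identity), and $A^{-1}B = \mathbf{1}_n$ forces $B=A$, we get $A^{-1}B\notin\mc{S}$ and similarly $BA^{-1}\notin\mc{S}$, i.e. $\{A^{-1}B, BA^{-1}\}\cap\mc{S}=\emptyset$, which is exactly the condition $B\in\mtt{Ap}_t(\mc{S},A)$.

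Concretely, the steps would be: first, invoke Theorem~\ref{T:uniquemgs} to fix $\mc{M}:=\mc{S}^*\setminus(\mc{S}^*\mc{S}^*)$ as the unique minimal generating set of $\mc{S}$; second, handle the trivial case $A\in\mc{M}$ versus $A\notin\mc{M}$ (if $A\notin\mc{M}$ one still has $A\in\mc{S}$ so $A\in\langle\mc{M}\rangle$, and the argument below shows $\mc{M}\subseteq L_t$, hence $A\in\langle L_t\rangle$ too); third, take an arbitrary $B\in\mc{M}$, and if $B=A$ we are done since $A\in L_t$, while if $B\ne A$ run the argument of the previous paragraph to conclude $B\in\mtt{Ap}_t(\mc{S},A)\subseteq L_t$; fourth, conclude $\mc{M}\subseteq L_t$, and since $\mc{M}$ generates $\mc{S}$ and $L_t\subseteq\mc{S}$, we get $\langle L_t\rangle = \mc{S}$.

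Alternatively — and perhaps more in the spirit of Theorem~\ref{T:Aperygenerates} — one could give a direct descent argument: given $B\in\mc{S}^*$, repeatedly strip off factors of $A$ on the left, then on the right, until one lands in $\mtt{Ap}_t(\mc{S},A)$, tracking that each stripping step strictly decreases some entry so the process terminates; this mirrors the inductive argument already used in the proof of Theorem~\ref{T:Aperygenerates}. I would favor the minimal-generating-set argument because it is shorter and cleanly reuses Theorem~\ref{T:uniquemgs}.

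The main obstacle, and the point that needs care, is the interaction between the left and right conditions defining $\mtt{Ap}_t$: for the two-sided Apéry set one must rule out \emph{both} $A^{-1}B\in\mc{S}$ and $BA^{-1}\in\mc{S}$, whereas in the one-sided theorem only one direction matters. The minimal-generator argument dispatches this neatly because minimality of $B$ simultaneously forbids $B$ from factoring as $A\cdot(A^{-1}B)$ and as $(BA^{-1})\cdot A$. One subtlety to check explicitly is that $A^{-1}B$ and $BA^{-1}$, when they happen to lie in $G(\N)$ at all, really do lie in $\mc{S}^*$ (not just $\mc{S}$) before we can call the factorization "nontrivial" — but this is exactly the case $B\ne A$, so it causes no trouble.
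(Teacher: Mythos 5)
Your proof is correct and follows essentially the same route as the paper: both reduce the claim to showing that the unique minimal generating set $\mc{E}=\mc{S}^*\setminus(\mc{S}^*\mc{S}^*)$ of Theorem~\ref{T:uniquemgs} is contained in $L_t$. The paper obtains this containment by citing Theorem~\ref{T:Aperygenerates} (so that $\mc{E}\subseteq L_l$ and $\mc{E}\subseteq L_r$, hence $\mc{E}\subseteq L_l\cap L_r=L_t$), whereas you verify it directly from the definition of $\mtt{Ap}_t(\mc{S},A)$; the two arguments are interchangeable.
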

\begin{proof}
We already know that $L_l:= \mtt{Ap}_{t}(\mc{S},A) \cup \{A\}$ and $L_r:=\mtt{Ap}_{t}(\mc{S},A) \{A\}$ are generating sets for $\mc{S}$. 
Let $\mc{E}=\mc{S}^*\setminus (\mc{S}^*\mc{S}^*)$ denote the unique minimal generating set of $\mc{S}$. 
Then $\mc{E}$ is a subset of both $L_l$ and $L_r$. 
Otherwise, for $A\in \mc{E} \setminus L_l$, the element $A$ does not belong to the subset $\langle L_l \rangle$, which is absurd. 
But it is evident from the definitions that $L_{t}= L_l \cap L_r$.
Hence, $L_{t}$ contains $\mc{E}$. 
This finishes the proof of our assertion.
\end{proof}

\section{Semigroups of Relative Ideals}\label{S:Relative}

Throughout this section, we assume that $\mc{S}$ is a unipotent numerical monoid of a unipotent algebraic subgroup $G\subset \mathbf{U}(n,\C)$.
The cases we have in mind are $G= \mathbf{U}(n,\C)$ or $G=\mathbf{P}(n,\C)$.
The following notational conventions will be used throughout the paper.

Let $\mc{I}$ and $\mc{J}$ be two subsets of $\mc{S}$.
The set $\{ XY\mid X\in \mc{I},\ Y\in \mc{J} \}$ will be denoted by $\mc{IJ}$. 
We now proceed to define the notion of a relative ideal.

Recall that a subset $\mc{I} \subseteq G(\N)$ is called a \red{\em relative right ideal} of $\mc{S}$ if the inclusion $\mc{I}\mc{S} \subseteq \mc{I}$ holds. 

\begin{Example}\label{E1:SandS^*}
Both $\mc{S}$ and its maximal subsemigroup $\mc{S}^*$ are automatically right relative ideals of $\mc{S}$. 
\end{Example}

We denote the set of all relative right ideals by $Rel_R(\mc{S})$. 
Now, if $T$ is a semigroup, an element $e\in T$ is said to be a \red{\em right identity element} of $T$ if $xe =x$ holds for all $x\in T$.

\begin{Proposition}\label{P:rightidentity}
The product $(\mc{I},\mc{J}) \mapsto \mc{IJ}$ endows $Rel_R(\mc{S})$ with a structure of a semigroup.
Furthermore, the relative right ideal $\mc{S}$ is a right identity element of $Rel_R(\mc{S})$.
\end{Proposition}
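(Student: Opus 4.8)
The plan is to verify the two assertions in turn, each reducing to a short direct check. First I would establish that $Rel_R(\mc{S})$ is closed under the product $(\mc{I},\mc{J})\mapsto \mc{IJ}$. Given $\mc{I},\mc{J}\in Rel_R(\mc{S})$, the set $\mc{IJ}$ is a subset of $G(\N)$ because $G(\N)$ is a monoid and hence closed under multiplication; so it suffices to show $(\mc{IJ})\mc{S}\subseteq \mc{IJ}$. Using associativity of matrix multiplication, $(\mc{IJ})\mc{S} = \mc{I}(\mc{J}\mc{S}) \subseteq \mc{I}\mc{J}$, where the inclusion uses $\mc{J}\mc{S}\subseteq\mc{J}$ (since $\mc{J}$ is a relative right ideal) together with the monotonicity of the setwise product $\mc{I}(-)$ with respect to inclusion. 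This gives $\mc{IJ}\in Rel_R(\mc{S})$.

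Next I would check associativity of the product on $Rel_R(\mc{S})$: for $\mc{I},\mc{J},\mc{K}\in Rel_R(\mc{S})$ one has $(\mc{IJ})\mc{K} = \mc{I}(\mc{JK})$, and this is just the elementwise associativity of matrix multiplication lifted to subsets — both sides equal $\{XYZ \mid X\in\mc{I},\ Y\in\mc{J},\ Z\in\mc{K}\}$. Hence $Rel_R(\mc{S})$ is a semigroup under this product.

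For the second assertion, I would show $\mc{S}$ is a right identity, i.e. $\mc{IS}=\mc{I}$ for every $\mc{I}\in Rel_R(\mc{S})$. The inclusion $\mc{IS}\subseteq\mc{I}$ is exactly the defining property of a relative right ideal. For the reverse inclusion $\mc{I}\subseteq\mc{IS}$, note that $\mathbf{1}_n\in\mc{S}$, so for any $X\in\mc{I}$ we have $X = X\mathbf{1}_n \in \mc{IS}$. Therefore $\mc{IS}=\mc{I}$, and $\mc{S}$ is a right identity element of $Rel_R(\mc{S})$.

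I do not expect any real obstacle here: the whole argument rests on associativity of matrix multiplication, monotonicity of setwise products under inclusion, and the presence of the identity matrix $\mathbf{1}_n$ in $\mc{S}$. The only point meriting a word of care is making sure products of subsets stay inside $G(\N)$ (so that the operation is well defined on $Rel_R(\mc{S})$ at all), which follows since $G(\N)$ is itself a monoid; everything else is a formal verification.
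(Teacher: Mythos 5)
Your proposal is correct and follows essentially the same route as the paper's proof: closure via $(\mc{IJ})\mc{S}=\mc{I}(\mc{J}\mc{S})\subseteq\mc{IJ}$, associativity inherited from matrix multiplication, and the identity property from $\mc{IS}\subseteq\mc{I}$ combined with $\mathbf{1}_n\in\mc{S}$. Your write-up is, if anything, slightly more explicit about the reverse inclusion $\mc{I}\subseteq\mc{IS}$ and about the product of subsets staying inside $G(\N)$, but there is no substantive difference.
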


\begin{proof} 
First of all, if $(\mc{I},\mc{J}) \in Rel_R(\mc{S})\times Rel_R(\mc{S})$, then  
\begin{align*}
\mc{I}\mc{J} := \{ XY\mid X\in \mc{I},\ Y\in \mc{J} \}
\end{align*}
is a relative right ideal of $\mc{S}$. 
This follows from the following simple observation: 
\begin{align*}
(\mc{I}\mc{J}) \mc{S}= \mc{I} (\mc{J}\mc{S}) \subseteq \mc{I} \mc{J}.
\end{align*}
The associativity of the product $(\mc{I},\mc{J}) \mapsto \mc{IJ}$ follows from the associativity of the matrix multiplication. 
We already pointed out in Example~\ref{E1:SandS^*} that $\mc{S} \in Rel_R(\mc{S})$. 
Since for every relative right ideal $\mc{I}$ of $\mc{S}$ we have $\mc{I}\mc{S}\subseteq \mc{I}$, and since $\mc{S}$ contains the identity matrix $\mathbf{1}_n$, we see that $\mc{I}\mc{S} = \mc{I}$. 
Therefore, $\mc{S}$ is a right identity element of $Rel_R(\mc{S})$.  
This finishes the proof of our proposition.
\end{proof}

Recall that a \red{\em relative left ideal of $\mc{S}$} is a subset $\mc{I}\subseteq G(\N)$ such that $\mc{S} \mc{I} \subseteq \mc{I}$.
A subset $\mc{I}\subseteq \mc{S}$ is said to be a \red{\em relative (two-sided) ideal of $\mc{S}$} if it is both a relative right ideal and a relative left ideal of $\mc{S}$.
A \red{\em right ideal} (resp. \red{\em left ideal}, resp. \red{\em ideal}) of $\mc{S}$ is a relative right (resp. left, resp. two-sided) ideal $\mc{I}\subset G(\N)$ such that $\mc{I}\subseteq \mc{S}$. 
We call a right (resp. left, resp. two-sided) ideal of $\mc{S}$ \red{\em proper} if $\mc{I}\neq \mc{S}$.

\begin{Example}\label{E2:fundamental}
Let $\mc{S}$ be a unipotent numerical monoid in $\mathbf{U}(n,\N)$. 
Recall that $\mtt{r}(\mc{S})$ denotes the generating number of $\mc{S}$. 
Let $k$ be an integer such that $k\geq \mtt{r}(\mc{S})$. 
Then the $k$-th fundamental monoid $\mathbf{U}(n,\N)_k$ is an ideal of $\mc{S}$. 
\end{Example}

Similarly to how the semigroup $Rel_R(\mc{S})$ is constructed, we can analogously construct semigroups corresponding to relative left ideals and relative ideals. 
These semigroups are denoted as $Rel_L(\mc{S})$ and $Rel_{T}(\mc{S})$, respectively.

\begin{Theorem}\label{P:identity}
The set $Rel_T(\mc{S})$ is a monoid with the identity element $\mc{S}$. 
\end{Theorem}
\begin{proof}
The semigroup property of $Rel_T(\mc{S})$ is evident. 
It is also evident that the relative ideal $\mc{S}$ serves as both the right identity and the left identity element for the product $(\mc{I},\mc{J})\mapsto \mc{IJ}$, where $\{\mc{I},\mc{J}\}\subset Rel_T(\mc{S})$.
\end{proof}

We have two remarks in order.

\begin{Remark}
There are no nontrivial invertible elements in $Rel_T(\mc{S})$. 
Indeed, if $\mc{I}$ is an invertible element of $Rel_T(\mc{S})$ such that $\mc{I}\neq \mc{S}$, then there exists $\mc{J}\in Rel_T(\mc{S})$ such that $\mc{IJ}=\mc{S}$.
This means that both $\mc{I}$ and $\mc{J}$ contains $\mathbf{1}_n$. 
It follows that 1) $\mc{S}\subseteq \mc{I}$, and 2) $\mc{I}\subseteq \mc{IJ}=\mc{S}$. 
Then we find that $\mc{I}=\mc{S}$, which is absurd. 
\end{Remark}

\begin{Remark}
Let $\mc{S}$ be a commutative unipotent numerical monoid. Then we have 
\begin{align*}
Rel_L(S) = Rel_R(S)=Rel_T(S).
\end{align*} 
\end{Remark}

The elements $\mc{I}\in Rel_T(\mc{S})$ such that $\mathbf{1}_n\in \mc{I}$ form a finite submonoid of $Rel_T(\mc{S})$. 
Indeed, if $\mc{I}$ and $\mc{J}$ are two relative ideals containing $\mbf{1}_n$, then $\mathbf{1}_n$ is still an element of $\mc{IJ}$.
Furthermore, in this case we have $\mc{I}\cup \mc{J}\subseteq \mc{IJ}$.
Finally, if a relative ideal $\mc{I}$ contains $\mathbf{1}_n$, then it contains the whole unipotent numerical monoid $\mc{S}$. 
It follows that there are only finitely many relative ideals $\mc{I}$ of $\mc{S}$ such that $\mathbf{1}_n\in \mc{I}$.
\begin{Definition}
Let $\mc{S}$ be a unipotent numerical monoid. 
The \red{\em torsion monoid of $\mc{S}$} is the finite submonoid 
\begin{align*}
Rel_T(\mc{S},\subseteq) := \{ \mc{I}\in Rel_T(\mc{S}) \mid \mc{S}\subseteq\mc{I} \} \subsetneq Rel_T(\mc{S}).
\end{align*}
\end{Definition}

\begin{Lemma}\label{L:unmiffidempotent}
Let $\mc{I}$ be an element of $Rel_T(\mc{S},\subseteq)$. 
Then $\mc{I}$ is a unipotent numerical monoid if and only if $\mc{I}$ is an idempotent. 
\end{Lemma}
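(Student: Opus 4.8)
The plan is to prove both implications by unwinding the definitions, using that $\mc{I}$ already contains $\mc{S}$ (hence contains $\mathbf{1}_n$) and is a relative ideal, i.e. $\mc{SI}\subseteq \mc{I}$ and $\mc{IS}\subseteq \mc{I}$.

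\textbf{The forward direction.} Suppose $\mc{I}$ is a unipotent numerical monoid. Then by definition $\mc{I}$ is closed under the matrix product, so $\mc{II}\subseteq \mc{I}$. For the reverse inclusion, since $\mathbf{1}_n\in \mc{S}\subseteq \mc{I}$, every $A\in \mc{I}$ can be written $A = A\cdot \mathbf{1}_n \in \mc{II}$, so $\mc{I}\subseteq \mc{II}$. Hence $\mc{II}=\mc{I}$, i.e. $\mc{I}$ is an idempotent of $Rel_T(\mc{S},\subseteq)$.

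\textbf{The reverse direction.} Suppose $\mc{I}$ is an idempotent, $\mc{II}=\mc{I}$. I need to check $\mc{I}$ satisfies the defining conditions of a unipotent numerical monoid: (a) $\mc{I}$ is a submonoid of $G(\N)$ that is finitely generated, and (b) $G(\N)\setminus \mc{I}$ is finite. First, $\mathbf{1}_n\in \mc{I}$ because $\mc{S}\subseteq \mc{I}$, and $\mc{II}=\mc{I}$ gives closure under multiplication, so $\mc{I}$ is a submonoid of $G(\N)$. For (b): since $\mc{S}\subseteq \mc{I}$ we have $G(\N)\setminus \mc{I}\subseteq G(\N)\setminus \mc{S}$, and the latter is finite by the definition of unipotent numerical monoid; therefore $\mc{I}$ is complement-finite in $G(\N)$. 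It remains to see $\mc{I}$ is finitely generated. Here I would invoke the theorem proved earlier in the paper that a complement-finite submonoid of $G(\N)$ automatically has a (unique) minimal generating set — concretely, one can note that $\mc{I}\supseteq \bfU(n,\N)_k \cap G(\N)$ for $k$ large enough (since the complement is finite), and the finitely many elements of $\mc{I}$ with entries below that threshold together with a generating set for the "tail" generate $\mc{I}$; alternatively cite Theorem~\ref{T:uniquemgs} applied to $\mc{I}$ in place of $\mc{S}$, whose proof only used complement-finiteness of the monoid. Thus $\mc{I}$ is a unipotent numerical monoid.

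\textbf{Main obstacle.} The only nontrivial point is the finite generation of $\mc{I}$ in the reverse direction; everything else is immediate from $\mathbf{1}_n\in\mc{I}$ and $\mc{S}\subseteq\mc{I}$. So I would be careful to state precisely which earlier result guarantees that a complement-finite, multiplicatively closed subset of $G(\N)$ containing $\mathbf{1}_n$ is finitely generated — most likely the same lemma from \cite{CanSakran} (cited in the proof of Theorem~\ref{T:uniquemgs}) that produces the generating set $\mc{Q}_k \sqcup \{\text{sporadic elements}\}$ — and apply it to $\mc{I}$. One should double-check that being an idempotent in $Rel_T(\mc{S},\subseteq)$, i.e. closure under the \emph{set} product $\mc{II}$, really does coincide with closure under the binary matrix operation; it does, because $\mc{II} = \{AB : A,B\in\mc{I}\}$ by definition, so $\mc{II}\subseteq\mc{I}$ is exactly "$\mc{I}$ closed under multiplication." That verification, together with $\mathbf{1}_n\in\mc{I}$, gives the submonoid structure, and the argument closes.
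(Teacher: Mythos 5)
Your proof is correct and follows essentially the same route as the paper's: the forward direction is the observation that closure under the set product plus $\mathbf{1}_n\in\mc{I}$ gives $\mc{I}^2=\mc{I}$, and the reverse direction uses $\mc{S}\subseteq\mc{I}$ to get complement-finiteness of the submonoid $\mc{I}$. You are in fact slightly more careful than the paper, whose proof stops at ``$\mc{I}$ is a complement-finite submonoid of $G(\N)$'' and silently omits the finite-generation clause in the definition of a unipotent numerical monoid; your appeal to the generating-set construction of~\cite[Lemma 2.9]{CanSakran} (equivalently, the argument used in Theorem~\ref{T:uniquemgs}), which only needs complement-finiteness, is exactly the right way to close that small gap.
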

\begin{proof}
If $\mc{I}^2=\mc{I}$ holds, then $\mc{I}$ is closed under multiplication. 
Since $\mathbf{1}_n\in \mc{I}$ and $\mc{S}\subseteq \mc{I}$, $\mc{I}$ is a completement finite submonoid of $G(\N)$. 

Conversely, if $\mc{I}$ is a unipotent numerical monoid, then $\mc{I}$ is closed under products, that is, $\mc{I}^2\subseteq \mc{I}$. 
In particular, since $\mathbf{1}_n\in \mc{I}$, we see that $\mc{I}^2 = \mc{I}$. 
\end{proof}

\begin{Theorem}\label{T:closedunderunions}
Let $\mc{S}$ be a unipotent numerical monoid. Then the semigroups $Rel_R(\mc{S})$, $Rel_L(\mc{S})$, and $Rel_T(\mc{S})$ are closed under finite intersections and finite unions. 
\end{Theorem}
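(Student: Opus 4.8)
The plan is to verify closure under finite binary intersections and finite binary unions for each of $Rel_R(\mc{S})$, $Rel_L(\mc{S})$, and $Rel_T(\mc{S})$; the general finite case then follows by an immediate induction on the number of sets, so I would only spell out the two-set case. Since the three semigroups are handled by the same argument applied on one side, the other side, or both, I would prove the statement for $Rel_R(\mc{S})$ in full and then remark that the left and two-sided versions are identical (for $Rel_T$ one simply intersects/unions the right-ideal and left-ideal conditions).

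First I would take $\mc{I}, \mc{J} \in Rel_R(\mc{S})$, so $\mc{I}\mc{S}\subseteq \mc{I}$ and $\mc{J}\mc{S}\subseteq \mc{J}$, and both are subsets of $G(\N)$. For the union: if $X \in (\mc{I}\cup\mc{J})\mc{S}$, then $X = YB$ with $B\in\mc{S}$ and either $Y\in\mc{I}$ or $Y\in\mc{J}$; in the first case $X\in\mc{I}\mc{S}\subseteq\mc{I}$, in the second $X\in\mc{J}\mc{S}\subseteq\mc{J}$, so $X\in\mc{I}\cup\mc{J}$. Hence $(\mc{I}\cup\mc{J})\mc{S}\subseteq\mc{I}\cup\mc{J}$, and since $\mc{I}\cup\mc{J}\subseteq G(\N)$, we get $\mc{I}\cup\mc{J}\in Rel_R(\mc{S})$. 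For the intersection: if $X\in(\mc{I}\cap\mc{J})\mc{S}$, then $X=YB$ with $B\in\mc{S}$ and $Y\in\mc{I}\cap\mc{J}$, so $X\in\mc{I}\mc{S}\subseteq\mc{I}$ and simultaneously $X\in\mc{J}\mc{S}\subseteq\mc{J}$, giving $X\in\mc{I}\cap\mc{J}$; and $\mc{I}\cap\mc{J}\subseteq G(\N)$. Hence $\mc{I}\cap\mc{J}\in Rel_R(\mc{S})$.

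The left-ideal case is verbatim the same computation with the factor from $\mc{S}$ written on the left, using $\mc{S}\mc{I}\subseteq\mc{I}$ and $\mc{S}\mc{J}\subseteq\mc{J}$. For $Rel_T(\mc{S})$, an element is a relative ideal precisely when it is simultaneously a relative right ideal and a relative left ideal, and we must also recall that relative ideals of $\mc{S}$ are required to be subsets of $\mc{S}$; both of these properties are clearly inherited by finite intersections and finite unions of members of $Rel_T(\mc{S})$, so the conclusion follows by combining the two one-sided arguments. Finally, I would close with the one-line induction: if $\mc{I}_1,\dots,\mc{I}_m$ lie in one of these semigroups, then writing $\bigcap_{i=1}^{m}\mc{I}_i = \mc{I}_1\cap\bigl(\bigcap_{i=2}^{m}\mc{I}_i\bigr)$ and likewise for unions, the claim follows from the binary case together with the inductive hypothesis.

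I do not anticipate a genuine obstacle here: the statement is essentially a formal consequence of the defining inclusions distributing over $\cup$ and $\cap$. The only point that requires a moment's care is bookkeeping the ambient-set requirements — $\mc{I}\subseteq G(\N)$ for the one-sided semigroups and $\mc{I}\subseteq\mc{S}$ for $Rel_T(\mc{S})$ — and noting that these are preserved under the set operations so that the resulting sets genuinely lie back in the same semigroup rather than merely satisfying the absorption inclusion.
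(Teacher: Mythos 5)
Your argument is correct and is essentially identical to the paper's proof, which simply records the two inclusions $(\mc{I}\cup \mc{J})\mc{S} = (\mc{I}\mc{S}) \cup (\mc{J}\mc{S}) \subseteq \mc{I}\cup \mc{J}$ and $(\mc{I}\cap \mc{J})\mc{S} \subseteq (\mc{I}\mc{S}) \cap (\mc{J}\mc{S}) \subseteq \mc{I}\cap \mc{J}$ for $Rel_R(\mc{S})$ and declares the other cases similar; your element-chasing is just the unwound version of the same computation. The only nitpick is that two-sided \emph{relative} ideals are (per the paper's operative definition) subsets of $G(\N)$ rather than of $\mc{S}$, but since either ambient condition is preserved by unions and intersections, this does not affect your proof.
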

	
\begin{proof}
We will prove our theorem for $Rel_R(\mc{S})$ only since the proofs of the other two cases are similar. 

Let $\mc{I}$ and $\mc{J}$ be two relative right ideals of $\mc{S}$.
Then we have 
\begin{enumerate}
\item $(\mc{I}\cup \mc{J}) \mc{S} = (\mc{I}\mc{S}) \cup (\mc{J}\mc{S}) \subseteq (\mc{I}\cup  \mc{J})$, and 
\item $(\mc{I}\cap \mc{J}) \mc{S} \subseteq (\mc{I}\mc{S}) \cap (\mc{J}\mc{S}) \subseteq (\mc{I}\cap  \mc{J})$.
\end{enumerate}
Hence, the proof of our theorem follows. 
\end{proof}

\begin{Notation}
Hereafter, the set of all idempotents of a semigroup $X$ will be denoted by $E(X)$. 
The neutral element of $X$ will be referred to as the \red{\em trivial idempotent} of $X$. 
\end{Notation}

\begin{Corollary}\label{C:intersections}
Let $\mc{S}$ be a unipotent numerical monoid. 
Then $Rel_T(\mc{S},\subseteq)$ is closed under unions and intersections.
In particular, the set of idempotents $E(Rel_T(\mc{S},\subseteq))$ is a lattice. 
\end{Corollary}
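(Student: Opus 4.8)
The plan is to deduce Corollary~\ref{C:intersections} from the preceding results, so the work is mostly organizational. The first statement — that $Rel_T(\mc{S},\subseteq)$ is closed under unions and intersections — follows by combining Theorem~\ref{T:closedunderunions} with the definition of the torsion monoid. Specifically, if $\mc{I}$ and $\mc{J}$ are relative ideals of $\mc{S}$ with $\mc{S}\subseteq \mc{I}$ and $\mc{S}\subseteq \mc{J}$, then by Theorem~\ref{T:closedunderunions} both $\mc{I}\cup\mc{J}$ and $\mc{I}\cap\mc{J}$ are again relative ideals of $\mc{S}$, and clearly $\mc{S}\subseteq \mc{I}\cap\mc{J}\subseteq \mc{I}\cup\mc{J}$, so both remain in $Rel_T(\mc{S},\subseteq)$. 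Hence $(Rel_T(\mc{S},\subseteq),\cup,\cap)$ is a finite lattice (finiteness being already noted in the discussion preceding the definition of the torsion monoid).

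For the ``in particular'' clause, I would first recall from Lemma~\ref{L:unmiffidempotent} that the idempotents of $Rel_T(\mc{S},\subseteq)$ are exactly the unipotent numerical monoids $\mc{T}$ with $\mc{S}\subseteq\mc{T}\subseteq G(\N)$. The key point is that the intersection of two such submonoids is again a submonoid: if $\mc{T}_1$ and $\mc{T}_2$ are unipotent numerical monoids containing $\mc{S}$, then $\mc{T}_1\cap\mc{T}_2$ is closed under products, contains $\mathbf{1}_n$, contains $\mc{S}$, and is complement-finite in $G(\N)$ — hence it is again a unipotent numerical monoid, i.e. again an idempotent of $Rel_T(\mc{S},\subseteq)$. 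So $E(Rel_T(\mc{S},\subseteq))$ is closed under the meet operation $\cap$ inherited from the ambient lattice.

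The one genuine point requiring care is the join: the set-theoretic union $\mc{T}_1\cup\mc{T}_2$ of two unipotent numerical monoids need not be a monoid, so the join \emph{within the lattice $E(Rel_T(\mc{S},\subseteq))$} is not $\cup$ but rather the smallest unipotent numerical monoid containing both, namely $\langle \mc{T}_1\cup\mc{T}_2\rangle$. I would verify that this supremum exists: the family of idempotents of $Rel_T(\mc{S},\subseteq)$ containing both $\mc{T}_1$ and $\mc{T}_2$ is nonempty (it contains $G(\N)$) and finite, and it is closed under $\cap$ by the previous paragraph, so it has a least element, which serves as $\mc{T}_1\vee\mc{T}_2$. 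Combined with the meet $\mc{T}_1\wedge\mc{T}_2 = \mc{T}_1\cap\mc{T}_2$, this shows $E(Rel_T(\mc{S},\subseteq))$ is a lattice. I expect this distinction between the union in $Rel_T(\mc{S},\subseteq)$ and the join in the sublattice of idempotents to be the main subtlety worth making explicit; everything else is bookkeeping with the definitions and Lemma~\ref{L:unmiffidempotent}.
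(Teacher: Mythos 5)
Your proposal is correct and follows essentially the same route as the paper: closure of $Rel_T(\mc{S},\subseteq)$ under $\cup$ and $\cap$ comes from Theorem~\ref{T:closedunderunions}, and the lattice property of $E(Rel_T(\mc{S},\subseteq))$ is obtained by showing closure under intersections (the paper computes $(\mc{I}\cap\mc{J})^2\subseteq \mc{I}^2\cap\mc{J}^2=\mc{I}\cap\mc{J}$, which is equivalent to your appeal to Lemma~\ref{L:unmiffidempotent}) and then invoking finiteness together with the top element $G(\N)$ to get joins. Your explicit remark that the join of idempotents is $\langle \mc{T}_1\cup\mc{T}_2\rangle$ rather than $\mc{T}_1\cup\mc{T}_2$ is a correct elaboration of what the paper leaves implicit.
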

\begin{proof}
The fact that the monoid $Rel_T(\mc{S},\subseteq)$ is closed under finite unions and finite intersections is clear from Theorem~\ref{T:closedunderunions}. 
Since $Rel_T(\mc{S},\subseteq)$ is finite and contains $G(\N)$ as the unique maximal element, to show that $E(Rel_T(\mc{S},\subseteq))$ is a lattice, it suffices to show that 
$E(Rel_T(\mc{S},\subseteq))$ is closed under intersections. 
Let $\mc{I}$ and $\mc{J}$ be two idempotents from $Rel_T(\mc{S},\subseteq)$.
To this end, we observe that
\begin{align*}
(\mc{I} \cap \mc{J})^2 \subseteq \mc{I}^2 \cap \mc{J}^2 = \mc{I}\cap \mc{J}.
\end{align*}
The reverse inclusion automatically holds since $\mathbf{1}_n \in \mc{I} \cap \mc{J}$.
Hence, the intersection $\mc{I}\cap \mc{J}$ is an idempotent as well. 
This finishes the proof of our corollary.
\end{proof}

In general, $Rel_T(\mc{S},\subseteq)$ and its set of idempotents $E(Rel_T(\mc{S},\subseteq))$ are not equal.

\begin{Example}
Let $\mc{S}$ denote the $k$-th fundamental monoid of $\mathbf{P}(n,\N)$. 
Then each element of $Rel_T(\mc{S},\subseteq)$ is of the form $\mc{S} \cup \bigcup_{A\in \mc{A}} A\mc{S}$, 
where $\mc{A}$ is a subset of the gap set of $\mc{S}$. 
We want to emphasize that not every element of $Rel_T(\mc{S},\subseteq)$ is a unipotent numerical monoid. 
Indeed, on the left side of Figure~\ref{F:2nd}, we have a relative ideal of $\mc{S}:=\mathbf{P}(3,\N)_2$, which is a unipotent numerical monoid itself. 
On the right side of the same figure, we have a relative ideal that is not a unipotent numerical monoid.
The points colored in blue are the minimal generators of the ideals. 

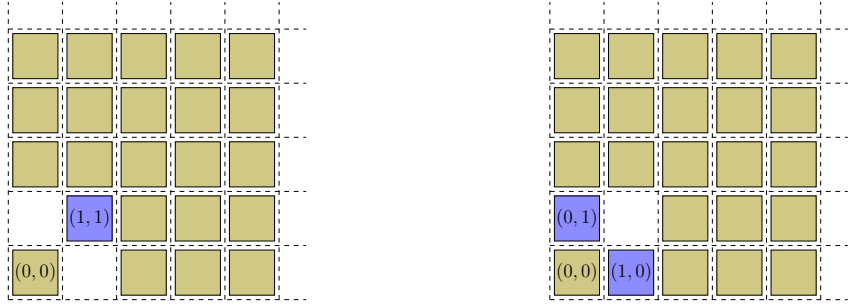
\begin{figure}[htp]
\begin{center}
\scalebox{.6}{
\begin{tikzpicture}[scale=1.2]
						
\begin{scope}[shift={(-5cm,0)}]
							
\foreach \i in {2,...,4} {\draw (\i,0)  node[fill=olive!45!, minimum size=1cm,draw] {};};
\foreach \i in {2,...,4} {\draw (\i,1)  node[fill=olive!45!, minimum size=1cm,draw] {};};
\foreach \i in {0,1,...,4} {\draw (\i,2)  node[fill=olive!45!, minimum size=1cm,draw] {};};
\foreach \i in {0,1,...,4} {\draw (\i,3)  node[fill=olive!45!, minimum size=1cm,draw] {};};
\foreach \i in {0,1,...,4} {\draw (\i,4)  node[fill=olive!45!, minimum size=1cm,draw] {};};
\foreach \i in {0,...,5} {\draw [dashed] (\i-.5,-.5) -- (\i-.5,5);};
\foreach \i in {0,...,5} {\draw [dashed] (-.5,\i-0.5) -- (5,\i-0.5);};

\draw (1,1)  node[fill=blue!45!, minimum size=1cm,draw] {};
\draw (0,0)  node[fill=olive!45!, minimum size=1cm,draw] {};
\node at (0,0) {$(0,0)$};
\node at (1,1) {$(1,1)$};
\end{scope}

\begin{scope}[shift={(5cm,0)}]

\foreach \i in {2,...,4} {\draw (\i,0)  node[fill=olive!45!, minimum size=1cm,draw] {};};
\foreach \i in {2,...,4} {\draw (\i,1)  node[fill=olive!45!, minimum size=1cm,draw] {};};
\foreach \i in {0,1,...,4} {\draw (\i,2)  node[fill=olive!45!, minimum size=1cm,draw] {};};
\foreach \i in {0,1,...,4} {\draw (\i,3)  node[fill=olive!45!, minimum size=1cm,draw] {};};
\foreach \i in {0,1,...,4} {\draw (\i,4)  node[fill=olive!45!, minimum size=1cm,draw] {};};
\foreach \i in {0,...,5} {\draw [dashed] (\i-.5,-.5) -- (\i-.5,5);};
\foreach \i in {0,...,5} {\draw [dashed] (-.5,\i-0.5) -- (5,\i-0.5);};

\draw (0,1)  node[fill=blue!45!, minimum size=1cm,draw] {};
\draw (1,0)  node[fill=blue!45!, minimum size=1cm,draw] {};
\draw (0,0)  node[fill=olive!45!, minimum size=1cm,draw] {};
\node at (0,0) {$(0,0)$};
\node at (0,1) {$(0,1)$};
\node at (1,0) {$(1,0)$};
\end{scope}

\end{tikzpicture}
}
\end{center}
\caption{Two relative ideals of $\mc{S}=\mathbf{P}(3,\N)_2$. The one on the left is a unipotent numerical monoid containing $\mc{S}$.}
\label{F:2nd}
\end{figure}

The idempotents of $Rel_T(\mc{S},\subseteq)$ are given by
\begin{align*}
\mc{T}_0 &:= \mc{S},\\ 
\mc{T}_1 &:= \mc{S} \cup (1,0)\mc{S},\\ 
\mc{T}_2 &:= \mc{S} \cup (1,1)\mc{S},\\ 
\mc{T}_3 &:= \mc{S} \cup (0,1)\mc{S},\\ 
\mc{T}_4 &:= \mc{S} \cup (1,0)\mc{S} \cup (1,1)\mc{S},\\ 
\mc{T}_5 &:= \mc{S} \cup (0,1)\mc{S} \cup (1,1)\mc{S},\\ 
\mc{T}_6 &:= \mathbf{P}(3,\N). 
\end{align*}

In Figure~\ref{F:Hasse}, we have the Hasse diagram of the lattice of idempotents $E(Rel_T(\mc{S},\subseteq))$.

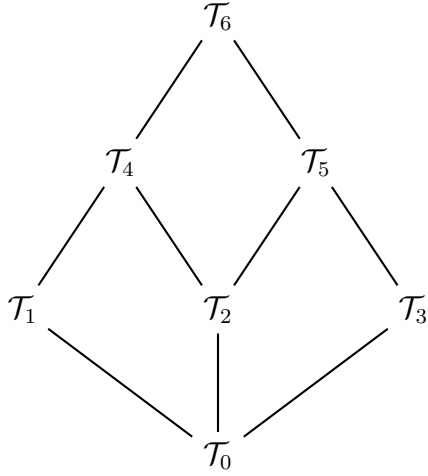
\begin{figure}[htp]
\begin{center}
\begin{tikzpicture}[scale=1.3]

\node (t0) at (0,-.5) {$\mc{T}_0$};
\node (t1) at (-2,1) {$\mc{T}_1$};
\node (t2) at (0,1) {$\mc{T}_2$};
\node (t3) at (2,1) {$\mc{T}_3$};
\node (t4) at (-1,2.5) {$\mc{T}_4$};
\node (t5) at (1,2.5) {$\mc{T}_5$};
\node (t6) at (0,4) {$\mc{T}_6$};

\draw[thick] (t0) to (t1);
\draw[thick] (t0) to (t2);
\draw[thick] (t0) to (t3);
\draw[thick] (t1) to (t4);
\draw[thick] (t2) to (t4);
\draw[thick] (t2) to (t5);
\draw[thick] (t3) to (t5);
\draw[thick] (t4) to (t6);
\draw[thick] (t5) to (t6);

\end{tikzpicture}
\end{center}
\caption{The Hasse diagram of the idempotent lattice $E(\mathbf{P}(3,\N)_2)$.}
\label{F:Hasse}
\end{figure}

\end{Example}

An idempotent semigroup is a semigroup whose elements are idempotents. 
Such semigroups are often referred to as \red{\em bands}.
\begin{Proposition}\label{P:itisaband}
Let $\mc{S}$ be a unipotent numerical monoid. 
If $\mc{S}$ is commutative, then $E(Rel_T(\mc{S},\subseteq))$ is a commutative band. 
\end{Proposition}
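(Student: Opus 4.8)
The goal is to show that when $\mc{S}$ is commutative, the set of idempotents $E(Rel_T(\mc{S},\subseteq))$ is a commutative band, i.e.\ that it is closed under the product $(\mc{I},\mc{J})\mapsto \mc{IJ}$ and that every element of the product set is again idempotent (commutativity is inherited from the commutativity of matrix multiplication in $\mc{S}$, since then $\mc{IJ}=\mc{JI}$ for all subsets). The main point is therefore closure: given two idempotents $\mc{I},\mc{J}\in Rel_T(\mc{S},\subseteq)$, the product $\mc{IJ}$ must again be an idempotent relative ideal containing $\mc{S}$.

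First I would record the easy containments. Since $\mathbf{1}_n\in\mc{I}$ and $\mathbf{1}_n\in\mc{J}$, we have $\mathbf{1}_n\in\mc{IJ}$, hence $\mc{S}\subseteq\mc{IJ}$, and moreover $\mc{I}\subseteq\mc{IJ}$ and $\mc{J}\subseteq\mc{IJ}$ (multiply by $\mathbf{1}_n$ on the appropriate side). Also $\mc{IJ}\in Rel_T(\mc{S})$ by the semigroup property established in Theorem~\ref{P:identity}, and it lies in $Rel_T(\mc{S},\subseteq)$ because it contains $\mathbf{1}_n$, hence contains $\mc{S}$. So it remains only to check $(\mc{IJ})^2=\mc{IJ}$. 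The inclusion $\mc{IJ}\subseteq(\mc{IJ})^2$ is automatic because $\mathbf{1}_n\in\mc{IJ}$. For the reverse inclusion I would use commutativity of $\mc{S}$: $(\mc{IJ})^2=\mc{I}\mc{J}\mc{I}\mc{J}=\mc{I}^2\mc{J}^2=\mc{I}\mc{J}$, where the middle equality reorders the factors using $\mc{JI}=\mc{IJ}$ (valid since all four sets live inside the commutative monoid $G(\N)$, so their pairwise products commute as sets), and the last equality uses that $\mc{I}$ and $\mc{J}$ are idempotents. This shows $\mc{IJ}\in E(Rel_T(\mc{S},\subseteq))$.

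Having closure, I would then note that $E(Rel_T(\mc{S},\subseteq))$ is a semigroup under the induced product — associativity descends from $Rel_T(\mc{S})$ — and every element is by definition idempotent, so it is a band; and it is commutative because $\mc{IJ}=\mc{JI}$ for all subsets of the commutative monoid $G(\N)$. That completes the argument. I do not anticipate a genuine obstacle here: the only subtlety is making sure the set-level rearrangement $\mc{I}\mc{J}\mc{I}\mc{J}=\mc{I}^2\mc{J}^2$ is justified, and this is precisely where the commutativity hypothesis on $\mc{S}$ (equivalently on the ambient $G(\N)$, e.g.\ the case $G=\mathbf{P}(n,\C)$) is used — one should state explicitly that commutativity of the underlying monoid forces $XY=YX$ for all elements, hence $\mc{A}\mc{B}=\mc{B}\mc{A}$ for all subsets $\mc{A},\mc{B}$. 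Alternatively, one can cite Corollary~\ref{C:intersections}: in the commutative case $Rel_T(\mc{S})=Rel_T(\mc{S},\subseteq)$'s products coincide with intersections on idempotents? — no, that is not quite true in general, so I would avoid that shortcut and give the direct computation above.
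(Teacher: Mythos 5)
Your argument is correct and follows essentially the same route as the paper's proof: both establish $\mc{IJ}=\mc{JI}$ from commutativity of the ambient monoid and then compute $(\mc{IJ})^2=\mc{I}\mc{J}\mc{I}\mc{J}=\mc{I}^2\mc{J}^2=\mc{IJ}$. Your version adds the (harmless, and slightly more careful) bookkeeping that $\mathbf{1}_n\in\mc{IJ}$ forces $\mc{IJ}\in Rel_T(\mc{S},\subseteq)$, which the paper leaves implicit.
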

\begin{proof}
It suffices to show that if $\mc{I}$ and $\mc{J}$ are two idempotent ideals from $Rel_T(\mc{S},\subseteq)$,
then $(\mc{I}\mc{J})^2 = \mc{I}\mc{J}$ and $\mc{I}\mc{J} = \mc{J}\mc{I}$.
Since $\mc{S}$ is commutative, the elements of $\mc{I}$ and $\mc{J}$ commute with each other, showing that $\mc{I}\mc{J}=\mc{J}\mc{I}$.  
In particular, we have 
$$
(\mc{I}\mc{J})^2 = \mc{I}\mc{J}\mc{I}\mc{J}=\mc{I}\mc{I}\mc{J}\mc{J} = \mc{I} \mc{J}.
$$
This finishes the proof.
\end{proof}	
\begin{Remark}
In the proof of Proposition~\ref{P:itisaband}, replacing $Rel_T(\mc{S},\subseteq)$ with $Rel_T(\mc{S})$, we see that if $\mc{S}$ is a commutative unipotent numerical monoid, then the idempotents of $Rel_T(\mc{S})$ form a commutative band.
\end{Remark}
	
We are now ready to prove the first stated theorem from the introduction.
Let us recall its statement for the convenience of the reader. 
\medskip

An element $\mc{T}$ of $Rel_T(\mc{S},\subseteq)$ is an idempotent if and only if $\mc{T}$ is a unipotent numerical monoid.
Furthermore, if $\mc{S}$ is commutative, then the idempotents of $Rel_T(\mc{S},\subseteq)$ form a band. 

\begin{proof}[Proof of Theorem~\ref{intro:T1}]
The proof follows readily from Lemma~\ref{L:unmiffidempotent} and Proposition~\ref{P:itisaband}. 
\end{proof}

Recall that a unipotent numerical monoid $\mc{S}$ is called irreducible 
if it cannot be written as an intersection of the form $\mc{S}=\mathcal{T}_1 \cap \mathcal{T}_2$, where $\mathcal{T}_i$ ($i\in \{1,2\}$) is a unipotent numerical monoid properly containing $\mc{S}$. The torsion monoid of $\mc{S}$ captures this property in a lattice theoretic way. 
We will discuss this connection in a greater detail after we introduce the pseudo-Frobenius sets.

\section{Generating Sets and Order Ideals}\label{S:Finite}

In this section, we discuss the generating sets of relative ideals in relation with the partial orders that we introduced in the introduction section.

Let $\mc{I}$ be a relative right ideal of $\mc{S}$. 
A subset $\mathcal{E}\subseteq \mc{I}$ is called \red{\em a set of generators for $\mc{I}$} if 
every element $D\in \mc{I}$ can be written in the form $D=AB$ for some $A\in \mathcal{E}$ and $B\in \mc{S}$.
In other words, a subset $\mathcal{E}\subset \mc{I}$ is a set of generators for $\mc{I}$ if $\mc{I}$ is given by 
\begin{align*}
\mathcal{E}\mc{S} := \{ AB \mid A\in \mc{E},\ B\in \mc{S}\} = \mc{I}.
\end{align*}
A relative right ideal is said to be \red{\em finitely generated} if it has a finite set of generators.

\begin{Lemma}\label{L:mingens1}
The set $\mc{I}\setminus \mc{I}^*\mc{S}^*$ is the unique minimal generating set for $\mc{I}$.
\end{Lemma}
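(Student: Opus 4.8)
\textbf{Proof plan for Lemma~\ref{L:mingens1}.}

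The plan is to mimic the argument used for Theorem~\ref{T:uniquemgs}, adapting it from the monoid $\mc{S}$ itself to an arbitrary relative right ideal $\mc{I}$, where the role of ``products of two nonunits'' is played by $\mc{I}^*\mc{S}^*$. Set $\mc{A} := \mc{I}\setminus \mc{I}^*\mc{S}^*$. First I would check that $\mc{A}$ actually generates $\mc{I}$, i.e. $\mc{A}\mc{S} = \mc{I}$. The inclusion $\mc{A}\mc{S}\subseteq \mc{I}$ is immediate since $\mc{A}\subseteq\mc{I}$ and $\mc{I}$ is a relative right ideal. For the reverse inclusion, take $D\in \mc{I}$; if $D\in\mc{A}$ we are done (write $D = D\cdot \mathbf{1}_n$). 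Otherwise $D\in\mc{I}^*\mc{S}^*$, so $D = A_0 B_1$ with $A_0\in\mc{I}^*$ and $B_1\in\mc{S}^*$. If $A_0\in\mc{A}$ we are done; if not, repeat on $A_0$, obtaining $A_0 = A_1 B_2$ with $A_1\in\mc{I}^*$, $B_2\in\mc{S}^*$, hence $D = A_1 B_2 B_1$. Iterating, the key point is that this process must terminate: at each step, passing from $A_k$ to $A_{k+1}$ strictly decreases some entry (since $B_{k+1}\in\mc{S}^*$ means $B_{k+1}\neq\mathbf{1}_n$, so $B_{k+1}$ has a strictly positive off-diagonal entry, and matrix multiplication by such a factor strictly increases — or at any rate changes monotonically — the entries, so $A_{k+1}\leq_e A_k$ with $A_{k+1}\neq A_k$; since all entries are nonnegative integers this cannot go on forever). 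Therefore some $A_k\in\mc{A}$, giving $D = A_k(B_{k+1}\cdots B_1)\in\mc{A}\mc{S}$.

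Next I would establish minimality and uniqueness simultaneously, exactly as in the proof of Theorem~\ref{T:uniquemgs}. Let $\mc{B}$ be any generating set for $\mc{I}$. First show $\mc{B}\supseteq\mc{A}$: if $A\in\mc{A}$, then since $\mc{B}$ generates $\mc{I}$ we can write $A = B C$ with $B\in\mc{B}\subseteq\mc{I}$ and $C\in\mc{S}$; were $C\in\mc{S}^*$, then $A = BC\in\mc{I}^*\mc{S}^*$ (noting $B\in\mc{I}$, and $B\neq\mathbf{1}_n$ since $\mathbf{1}_n\notin\mc{I}$ — wait, one must be careful here, see the caveat below), contradicting $A\in\mc{A}$; hence $C = \mathbf{1}_n$ and $A = B\in\mc{B}$. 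This shows every generating set contains $\mc{A}$, so $\mc{A}$ is the unique minimal one, provided $\mc{A}$ itself is a generating set, which was the first step. Hence $\mc{A} = \mc{I}\setminus\mc{I}^*\mc{S}^*$ is the unique minimal generating set.

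The main obstacle, and the step I would be most careful about, is the bookkeeping around $\mathbf{1}_n$ and the definition of $\mc{I}^*$. Since $\mc{I}\subseteq G(\N)$ is a relative right ideal that need not contain $\mathbf{1}_n$, one has $\mc{I}^* = \mc{I}\setminus\{\mathbf{1}_n\}$, which may simply equal $\mc{I}$ when $\mathbf{1}_n\notin\mc{I}$ (this is the situation for proper ideals), but the statement is meant to cover $\mc{I} = \mc{S}$ as well, where $\mc{I}^* = \mc{S}^*$ and the lemma specializes to Theorem~\ref{T:uniquemgs}. In the factorization argument I need that whenever $A\in\mc{I}$ factors as $A = BC$ with $B\in\mc{I}$, $C\in\mc{S}$, and $C\neq\mathbf{1}_n$, then $A\in\mc{I}^*\mc{S}^*$ — this requires $B\in\mc{I}^*$, i.e. $B\neq\mathbf{1}_n$, which holds automatically when $\mathbf{1}_n\notin\mc{I}$ and, in the case $\mc{I} = \mc{S}$, follows because $A = BC$ with $C\neq\mathbf{1}_n$ and $A$ a nonunit forces $B$ to be a nonunit. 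I would state this as a short case distinction (or simply observe $\mathbf{1}_n\in\mc{I}$ implies $\mc{I} = \mc{S}$, reducing to the already-proven Theorem~\ref{T:uniquemgs}, and otherwise $\mc{I}^* = \mc{I}$ and everything is clean), so the termination argument and the ``generating set contains $\mc{A}$'' argument both go through without friction.
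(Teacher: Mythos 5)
Your proposal follows essentially the same route as the paper's proof: generation is established by the identical descending factorization $X_0=X_1A_1=X_2A_2A_1=\cdots$, terminating because the nonnegative integer entries strictly decrease, and the claim that $\mc{I}\setminus\mc{I}^*\mc{S}^*$ lies in every generating set is the argument of Theorem~\ref{T:uniquemgs} transplanted to $\mc{I}$ (a step the paper simply asserts ``does not require a proof''). In that respect your write-up is, if anything, more complete than the paper's.

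However, the point you yourself flag as delicate is handled incorrectly. The parenthetical claim that $\mathbf{1}_n\in\mc{I}$ implies $\mc{I}=\mc{S}$ is false: it only gives $\mc{S}=\mathbf{1}_n\mc{S}\subseteq\mc{I}$, and the torsion monoid $Rel_T(\mc{S},\subseteq)$ consists precisely of relative ideals that contain $\mathbf{1}_n$ yet are generally strictly larger than $\mc{S}$. Your explicit case distinction likewise covers only $\mathbf{1}_n\notin\mc{I}$ and $\mc{I}=\mc{S}$, so the case $\mc{S}\subsetneq\mc{I}$ with $\mathbf{1}_n\in\mc{I}$ is left open --- and there the factorization through $\mathbf{1}_n$ that you were worried about genuinely breaks the ``every generating set contains $\mc{A}$'' step. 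Concretely, take $n=2$, identify $G(\N)$ with $\N$, let $\mc{S}=\langle 2,3\rangle=\{0,2,3,4,\dots\}$ and $\mc{I}=\N$ (a relative right ideal of $\mc{S}$). Then $\mc{I}^*\mc{S}^*=\{3,4,5,\dots\}$, so $\mc{I}\setminus\mc{I}^*\mc{S}^*=\{0,1,2\}$, yet $\{0,1\}$ already generates $\mc{I}$ because $2=0+2$ with $0\notin\mc{I}^*$ and $2\in\mc{S}$. So in this regime the asserted set is not contained in every generating set. The paper's proof silently suffers from the same edge case (its unproved minimality assertion is exactly what fails), so your instinct to isolate this step was right; the correct repair is to restrict the statement to ideals with $\mathbf{1}_n\notin\mc{I}$, not to claim that the case $\mathbf{1}_n\in\mc{I}$ reduces to Theorem~\ref{T:uniquemgs}.
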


\begin{proof}
Let $\mc{F}$ denote $\mc{I}\setminus \mc{I}^*\mc{S}^*$.
The fact that $\mc{F}$ must be a part of any generating set for $\mc{I}$ does not require a proof. 
		
Let $X_0\in \mc{I}^*\mc{S}^*$. 
Then $X_0 = X_1A_1$ for some $X_1\in \mc{I}^*$ and $A_1\in \mc{S}^*$. 
Assume that $X_1\notin \mc{F}$.
Then, there exists $X_2 \in \mc{I}^*$ and $A_2\in \mc{S}^*$ such that $X_1 = X_2 A_2$.
Hence, we have $X_0 = X_2 A_2A_1$.
We proceed inductively to construct decompositions,
\begin{align*}
X_0 &= X_1A_1 \\ 
&= X_2 A_2 A_1 \\
&\vdots \\
&= X_m A_m A_{m-1} \cdots A_2 A_1\\
&\vdots
\end{align*}
where $A_i \in \mc{S}^*$ and $X_i\in \mc{I}^*$ for $i=1,2,\dots$.
But since all matrices involved in these decompositions are from $G(\N)$, in each decomposition, the entries of the matrices get smaller.
Hence, this process terminates after finitely many iterations. 
At the end, we arrive at a decomposition 
\begin{align*}
X_0 = X_r (A_r A_{r-1} \cdots A_2 A_1),
\end{align*}
where $X_r \in \mc{I}\setminus (\mc{I}^*\mc{S}^*)$ and $A_r A_{r-1} \cdots A_2 A_1\in \mc{S}^*$.
It follows that the set $\mc{I}\setminus (\mc{I}^*\mc{S}^*)$ is a minimal generating set for the relative right ideal $\mc{I}$.
\end{proof}

The generating set of a relative right ideal has a convenient interpretation via the partial orders that we described in the introduction. 
	
\begin{Lemma}\label{L:mingens2}
Let $\mc{I}$ be a relative right ideal.
Then the unique minimal generating set of $\mc{I}$ is given by the set of minimal elements of $\mc{I}^*$ with respect to 
$\leq_{\mc{S},l}$, denoted ${\rm Minimals}_{\leq_{\mc{S},l}} \mc{I}^*$. 
\end{Lemma}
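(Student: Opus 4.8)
The goal is to identify the unique minimal generating set of a relative right ideal $\mc{I}$, already known by Lemma~\ref{L:mingens1} to be $\mc{I}\setminus \mc{I}^*\mc{S}^*$, with the set of $\leq_{\mc{S},l}$-minimal elements of $\mc{I}^*$. The plan is to prove the set equality $\mc{I}\setminus(\mc{I}^*\mc{S}^*) = {\rm Minimals}_{\leq_{\mc{S},l}}\mc{I}^*$ directly, by unwinding the definition of the order $\leq_{\mc{S},l}$, namely $A\leq_{\mc{S},l}B \iff A^{-1}B\in\mc{S}$.

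\textbf{Key steps.} First I would establish the containment ${\rm Minimals}_{\leq_{\mc{S},l}}\mc{I}^* \subseteq \mc{I}\setminus(\mc{I}^*\mc{S}^*)$. Take $X$ minimal in $\mc{I}^*$ with respect to $\leq_{\mc{S},l}$, and suppose toward a contradiction that $X\in\mc{I}^*\mc{S}^*$, say $X = YA$ with $Y\in\mc{I}^*$ and $A\in\mc{S}^*$. Then $Y^{-1}X = A\in\mc{S}$, so $Y\leq_{\mc{S},l}X$; and since $A\neq\mathbf{1}_n$ has some strictly positive entry, $Y\neq X$ (one can see this from the entrywise order, as the entries of $Y$ are $\leq_e$ those of $X$ with at least one strict inequality — here I would invoke that products in $G(\N)$ are entrywise monotone, as used in the proof of Lemma~\ref{L:mingens1}). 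This contradicts minimality of $X$. Second, for the reverse containment, take $X\in\mc{I}\setminus(\mc{I}^*\mc{S}^*)$; note $X\neq\mathbf{1}_n$ is impossible only if $\mathbf{1}_n\in\mc{I}$, but in any case if $X=\mathbf{1}_n$ it is trivially minimal, so assume $X\in\mc{I}^*$. Suppose $Y\leq_{\mc{S},l}X$ for some $Y\in\mc{I}^*$, i.e. $Y^{-1}X = A\in\mc{S}$. If $A\neq\mathbf{1}_n$ then $A\in\mc{S}^*$ and $X = YA\in\mc{I}^*\mc{S}^*$, contradicting the choice of $X$; hence $A=\mathbf{1}_n$ and $X=Y$. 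This shows $X$ is minimal in $\mc{I}^*$. Combining the two containments gives the claimed equality, and uniqueness is inherited from Lemma~\ref{L:mingens1}.

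\textbf{Main obstacle.} The only subtle point is the argument that $Y\neq X$ in the first containment — i.e. that a factor $A\in\mc{S}^*$ genuinely shrinks $X$ — which relies on the fact that right multiplication by a non-identity element of $G(\N)$ strictly increases some entry (equivalently, $\leq_{\mc{S},l}$ is a genuine partial order with $\mathbf{1}_n$ minimal, which is consistent with the setup in the introduction and with Theorem~\ref{intro:T4}). Once that antisymmetry/monotonicity fact is in hand, the rest is a routine rewriting of the two defining conditions, and the whole proof is short. I would keep it to a few lines, citing Lemma~\ref{L:mingens1} for the hard existence/uniqueness content and the entrywise-order monotonicity for the order-theoretic translation.
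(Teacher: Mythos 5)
Your proof is correct and follows essentially the same route as the paper: both reduce the statement to Lemma~\ref{L:mingens1} and establish the set equality $\mc{I}\setminus(\mc{I}^*\mc{S}^*)={\rm Minimals}_{\leq_{\mc{S},l}}\mc{I}^*$, and your argument that a $\leq_{\mc{S},l}$-minimal element cannot factor as $YA$ with $Y\in\mc{I}^*$ and $A\in\mc{S}^*$ is exactly the paper's. The only (harmless) divergence is in the other containment: the paper first shows that the minimal elements generate $\mc{I}$ (every element of $\mc{I}^*$ dominates a minimal one) and then uses that the unique minimal generating set lies in every generating set, whereas you verify $\mc{I}\setminus(\mc{I}^*\mc{S}^*)\subseteq{\rm Minimals}_{\leq_{\mc{S},l}}\mc{I}^*$ directly from the definition of $\leq_{\mc{S},l}$ and cancellativity, which avoids the well-foundedness step; your parenthetical about the case $\mathbf{1}_n\in\mc{I}$ flags a boundary issue that the paper's own proof also glosses over, so no further action is needed there.
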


\begin{proof}
Since every element $B\in \mc{I}^*$ is greater than or equal to some minimal element $A\in {\rm Minimals}_{\leq_{\mc{S},l}} \mc{I}^*$, we have $B\in A\mc{S}$. 
But this means that ${\rm Minimals}_{\leq_{\mc{S},l}} \mc{I}^*$ generates $\mc{I}$. 
Hence, the unique minimal generating set $\mc{I}\setminus \mc{I}^*\mc{S}^*$ is contained in ${\rm Minimals}_{\leq_{\mc{S},l}} \mc{I}^*$.
Conversely, suppose $A$ is an element of the minimum under $\leq_{\mc{S}, l}$ of $\mc{I}^*$, then $A$ cannot be contained in $\mc{I}^* \mc{S}^*$. 
Otherwise, this would contradict the minimality of $A$.
Therefore, $A$ is an element of $\mc{I}\setminus \mc{I}^*\mc{S}^*$. 
This finishes the proof of our assertion. 
\end{proof}

In the rest of this section we establish an important combinatorial characterization of the ideals of a unipotent numerical monoid. 
We continue with the discussion of a general order theoretic property.
Let $G$ denote either $\mathbf{U}(n,\C)$ or $\mathbf{P}(n,\C)$.
We recall the statement of Theorem~\ref{intro:T4} from the introduction:
\medskip		

Let $\leq$ denote one of the following partial orders: $\leq_{\mc{S},l}$, $\leq_{\mc{S},r}$, or $\leq_{\mc{S},t}$. 
Then $(G(\N),\leq)$ is a locally finite poset.

\begin{proof}[Proof of Theorem~\ref{intro:T4}]
Without loss of generality we will assume that $\leq$ is the order $\leq_{\mc{S},l}$. 
The proofs of the other cases are similar. 
		
Let $[ A,B ]$ be an interval in $(G(\N),\leq)$. 
Then for every $C\in [A,B]$ we have $A\leq C$ and $C\leq B$. 
Equivalently, for every $C\in [A,B]$ we have $A^{-1}C \in \mc{S}$ and $C^{-1}B \in \mc{S}$.
Also, equivalently, for every $C\in [A,B]$ we have $C\in A\mc{S}$ and $B\in C\mc{S}$. 
It follows that if $A,B$, and $C$ are given by $A=(a_{ij})_{1\leq i,j\leq n}$, $B=(b_{ij})_{1\leq i,j\leq n}$, and 
$C=(c_{ij})_{1\leq i,j\leq n}$, then $a_{ij}\leq c_{ij} \leq b_{ij}$ for every $1\leq i,j\leq n$. 
Now, let us assume towards a contradiction that the interval $[A,B]$ has infinitely many elements. 
Then there are infinitely many matrices $C(l):= (c_{ij}(l))_{1\leq i,j\leq n}$, $l=1,2,\dots$ in $[A,B]$ such that $C(l) \in A\mc{S}$ and $B\in C(l)\mc{S}$.
But then there exists a matrix coordinate $(i,j)$, where $1\leq i < j\leq n$, such that $c_{ij}(l) \to \infty$ as $l\to \infty$.  
This means that $b_{ij}\to \infty$ as $l\to \infty$. 
Clearly, this is impossible since the entries of $B$ are fixed integers. 
Therefore, $[A,B]$ has only finitely many elements. 
\end{proof}
\medskip

We proceed to characterize the ideals of $\mc{S}$ by using $\mc{S}$-orders.
\begin{Definition}
Let $(P,\leq)$ be a poset. 
A subset $L\subseteq P$ is called a \red{\it lower order ideal} of $P$ if for every $y\in L$ and $x\in P$, the relation $x\leq y$ implies that $x\in L$. 
\end{Definition}

\begin{Theorem}\label{T:lowerideal}
Let $\mc{I}$ be a subset of $\mc{S}$. 
Then the following statements are equivalent: 
\begin{enumerate}
\item[(1)] $\mc{S}\setminus \mc{I}$ is a lower order ideal of the poset $(\mc{S},\leq_{\mc{S},l})$.
\item[(2)] $\mc{I}$ is a right ideal of $\mc{S}$.
\end{enumerate}
\end{Theorem}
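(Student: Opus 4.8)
The plan is to prove the two implications separately, unwinding the definitions of "right ideal" and "lower order ideal" in terms of the left $\mc{S}$-order $\leq_{\mc{S},l}$. Recall that $A\leq_{\mc{S},l} B$ means $A^{-1}B\in\mc{S}$, equivalently $B\in A\mc{S}$. So the complement $\mc{S}\setminus\mc{I}$ being a lower order ideal of $(\mc{S},\leq_{\mc{S},l})$ says: whenever $B\in\mc{S}\setminus\mc{I}$ and $A\in\mc{S}$ with $B\in A\mc{S}$, then $A\in\mc{S}\setminus\mc{I}$. Meanwhile, $\mc{I}\subseteq\mc{S}$ is a right ideal means $\mc{I}\mc{S}\subseteq\mc{I}$, i.e. for all $A\in\mc{I}$ and $C\in\mc{S}$ we have $AC\in\mc{I}$.

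First I would prove $(1)\Rightarrow(2)$. Assume $\mc{S}\setminus\mc{I}$ is a lower order ideal. Take $A\in\mc{I}$ and $C\in\mc{S}$; I want $AC\in\mc{I}$. Since $A,C\in\mc{S}$ and $\mc{S}$ is a monoid, $AC\in\mc{S}$, so it suffices to rule out $AC\in\mc{S}\setminus\mc{I}$. Suppose $AC\in\mc{S}\setminus\mc{I}$. Setting $B=AC$, we have $A\in\mc{S}$ and $B=AC\in A\mc{S}$, so $A\leq_{\mc{S},l} B$. By the lower-order-ideal property applied to $B\in\mc{S}\setminus\mc{I}$, we conclude $A\in\mc{S}\setminus\mc{I}$, contradicting $A\in\mc{I}$. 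Hence $AC\in\mc{I}$, so $\mc{I}\mc{S}\subseteq\mc{I}$ and $\mc{I}$ is a right ideal.

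For $(2)\Rightarrow(1)$, assume $\mc{I}$ is a right ideal. Let $B\in\mc{S}\setminus\mc{I}$ and suppose $A\in\mc{S}$ with $A\leq_{\mc{S},l} B$; I must show $A\in\mc{S}\setminus\mc{I}$. By definition of the order, $C:=A^{-1}B\in\mc{S}$, so $B=AC$ with $A\in\mc{S}$, $C\in\mc{S}$. If $A\in\mc{I}$, then since $\mc{I}$ is a right ideal we get $B=AC\in\mc{I}\mc{S}\subseteq\mc{I}$, contradicting $B\notin\mc{I}$. Therefore $A\notin\mc{I}$, i.e. $A\in\mc{S}\setminus\mc{I}$ (using $A\in\mc{S}$), which is exactly the lower-order-ideal condition. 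This completes the proof.

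I do not anticipate a genuine obstacle here; the statement is essentially a dictionary translation and both directions are short contrapositive-style arguments. The only point requiring minor care is the bookkeeping that $\mc{S}\setminus\mc{I}$ is being regarded as a subposet of $(\mc{S},\leq_{\mc{S},l})$ — so the ambient poset in the definition of lower order ideal is $\mc{S}$, not $G(\N)$ — and that the relation $A\leq_{\mc{S},l}B$ with $B\in\mc{S}$ automatically forces $A\in\mc{S}$ whenever we also know $A^{-1}B\in\mc{S}$ and... actually it does not, so one must remember to quantify $A$ over $\mc{S}$ as the definition of lower order ideal inside $(\mc{S},\leq_{\mc{S},l})$ does. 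A remark could be added afterward that the same argument, using $\leq_{\mc{S},r}$ in place of $\leq_{\mc{S},l}$, characterizes left ideals, matching the statement made in the introduction.
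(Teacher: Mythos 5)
Your proof is correct and follows essentially the same route as the paper's: both directions are the same contradiction arguments, using $A \leq_{\mc{S},l} AC$ for $(1)\Rightarrow(2)$ and $B = A(A^{-1}B) \in \mc{I}\mc{S}$ for $(2)\Rightarrow(1)$. Your closing remark about the left-ideal analogue via $\leq_{\mc{S},r}$ also matches the remark the paper records after its proof.
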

	
\begin{proof}
Let us assume that $\mc{S}\setminus \mc{I}$ is a lower order ideal of $(\mc{S},\leq_{\mc{S},l})$.
This means that if $B\in \mc{S}\setminus \mc{I}$ and $A\in \mc{S}$ are two elements such that $A \leq_{\mc{S},l} B$, then $A$ is an element of $\mc{S}\setminus \mc{I}$ as well. 
Let $X\in \mc{I}$ and $Y\in \mc{S}$. 
We assume towards a contradiction that $XY\notin \mc{I}$. 
Then $XY\in \mc{S}\setminus \mc{I}$ as $\mc{I}\subseteq \mc{S}$. 
Since $Y\in \mc{S}$, the relation $X \leq_{\mc{S},l} XY$ holds. 
Then the lower order ideal property of $\mc{S}\setminus \mc{I}$ implies that $X\in \mc{S}\setminus \mc{I}$, contradicting our initial assumption that $X\in \mc{I}$. 
This shows that (1) implies (2). 
		
Conversely, we assume that $\mc{I}$ is a right ideal of $\mc{S}$.
Let $A\in \mc{S}$ and $B\in \mc{S}\setminus \mc{I}$ be two elements such that $A\leq_{\mc{S},l} B$.
We will show that $A\in \mc{S}\setminus \mc{I}$.
Now, $A\leq_{\mc{S},l} B$ implies that $A^{-1}B\in \mc{S}$ or that $B\in A\mc{S}$.
Of course, if $A$ is an element of $\mc{I}$, then $A\mc{S}\subset \mc{I}$ implies that $B\in \mc{I}$, contradicting our initial assumption that $B\in \mc{S}\setminus \mc{I}$. 
Therefore, we have $A\in \mc{S}\setminus \mc{I}$ as claimed.
This shows that (2) implies (1). 
		
Hence, the proof of our theorem is complete. 
\end{proof}

\begin{Remark}
The proof of our theorem extends naturally to left ideals and two-sided ideals of $\mc{S}$. 
However, for brevity, we omit the statements and proofs here.
\end{Remark}

\section{Frobenius Sets}\label{S:Frobenius}

As we mentioned in the introduction, a noncommutative unipotent numerical monoid has several Frobenius sets.
Recall that the Frobenius sets of a subset $\mc{I}\subseteq \mc{S}$, where $\mc{S}$ is a unipotent numerical monoid, are defined by 
\begin{align*}
\mtt{F}_l(\mc{I}) &:= \{ A \in G(\N) \mid A \notin \mc{I}^* \text{ and } A(G(\N)^*)\subseteq \mc{I}\},\\
\mtt{F}_r(\mc{I}) &:= \{ A \in G(\N) \mid A\notin \mc{I}^* \text{ and } (G(\N)^*)A \subseteq \mc{I}\},\\
\mtt{F}_{t}(\mc{I}) &:= \{ A \in G(\N) \mid A\notin \mc{I}^* \text{ and } A(G(\N)^*)\cup (G(\N)^*)A \subseteq \mc{I}\}.
\end{align*}
In particular, we have 
\begin{align*}
\mtt{F}_{t}(\mc{I}) = \mtt{F}_{l}(\mc{I})\cap \mtt{F}_{r}(\mc{I}).
\end{align*} 
Notice that the Frobenius sets are always non-empty since every maximal element with respect to the entrywise order in $G(\mb{N})\backslash\mc{S}^*$ is an element of the Frobenius set. 
We proceed with another remark.
	
\begin{Remark}\label{R:absurd}
If $A$ and $B$ are two elements from $\mtt{F}_l (\mc{S})$, then there is no $C\in G(\N)^*$ such that $A = BC$.
Indeed, if we assume the existence of such an element $C\in G(\N)^*$, then we find that $BC=A \in G(\N)\setminus \mc{S}$,
which is absurd since $B\in \mtt{F}_l(\mc{S})$.
Likewise, it is easy to see that for $\{A,B\}\subseteq \mtt{F}_r(\mc{S})$ (resp. $\{A,B\}\subseteq \mtt{F}_t(\mc{S})$)
there is no $C\in G(\N)^*$ such that $A=CB$ (resp. there is no $\{C,D\}\subset G(\N)^*$ s.t. $A=BC=DB$).
\end{Remark}

\begin{Example}\label{E:subtleexample0}
We will show in this example that, in general, the left and the right Frobenius sets do not agree. 
Let $\mc{S}$ be the unipotent numerical monoid in $G(\N):= \mathbf{U}(3,\N)$ defined by 
\begin{align*}
\mc{S}=\mtt{N}(\mc{S})\sqcup \mbf{U}(2,\mb{N})_3,
\end{align*}
where $\mtt{N}(\mc{S})$ consists of matrices 
\begin{align*}
\begin{bmatrix}
1 & a & b \\
0 & 1 & c \\
0 & 0 & 1
\end{bmatrix} \in \mathbf{U}(3,\N),
\end{align*}
where $(a,b,c)$ is an element of the set
\begin{align*}
\left\{
\begin{matrix}
(0,0,0), & (1,1,1), & (1,2,1) & (1,2,2) \\  (2,0,1), & (2,0,2), & (2,1,1) &
\end{matrix}
\right\}.
\end{align*}
It is straightforward to check that the right Frobenius set of $\mc{S}$ is given by
\begin{align*}
\mtt{F}_r(\mc{S}) &= \{A\in \mathbf{U}(3,\N) \mid A\notin \mc{S}\text{ and } \mathbf{U}(3,\N)^*A\subset \mc{S}\}\\
&=
\left\{
\begin{bmatrix}
1 & 0 & 2 \\
0 & 1 & 2 \\
0 & 0 & 1
\end{bmatrix}, \begin{bmatrix}
1 & 2 & 2 \\
0 & 1 & 2 \\
0 & 0 & 1
\end{bmatrix}
\right\}.
\end{align*}
The left Frobenius set of $\mc{S}$ is given by
\begin{align*}
\mtt{F}_l(\mc{S}) &= \{A\in \mathbf{U}(3,\N) \mid A\notin \mc{S}\text{ and } A \mathbf{U}(3,\N)^*\subset \mc{S}\}\\
&=
\left\{
\begin{bmatrix}
1 & 2 & 2 \\
0 & 1 & 0 \\
0 & 0 & 1
\end{bmatrix}, 
\begin{bmatrix}
1 & 2 & 2 \\
0 & 1 & 1 \\
0 & 0 & 1
\end{bmatrix}, 
\begin{bmatrix}
1 & 2 & 2 \\
0 & 1 & 2 \\
0 & 0 & 1
\end{bmatrix}
\right\}.
\end{align*}
Clearly, we have $\mtt{F}_r(\mc{S})\neq \mtt{F}_l(\mc{S})$. 
\end{Example}

\begin{Proposition}\label{P:SunionF}
Let $\mc{S}$ be a unipotent numerical monoid.
Let $\mtt{F}$ denote one of $\mtt{F}_l(\mc{S})$, $\mtt{F}_r(\mc{S})$, or $\mtt{F}_{t}(\mc{S})$. 
Then the union $\mc{S} \cup \mtt{F}$ is a unipotent numerical monoid. 
Furthermore, if $\mtt{F}=\mtt{F}_t(\mc{S})$, then $\mc{S}\cup \mtt{C}$ is a unipotent numerical monoid for any subset $\mtt{C}\subseteq\mtt{F}$.
\end{Proposition}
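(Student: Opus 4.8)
The plan is to show that $\mc{S} \cup \mtt{F}$ (and more generally $\mc{S} \cup \mtt{C}$ for $\mtt{C} \subseteq \mtt{F}_t(\mc{S})$) is closed under multiplication; since it clearly contains $\mathbf{1}_n$, is contained in $G(\N)$, and has finite complement (being a superset of $\mc{S}$), closure under products is the only thing that needs verification. Because $\mc{S}$ is already a monoid and $\mtt{F} \subseteq G(\N)$, the products we must control are of three types: (i) $A \cdot s$ with $A \in \mtt{F}$ and $s \in \mc{S}$; (ii) $s \cdot A$ with $s \in \mc{S}$, $A \in \mtt{F}$; and (iii) $A \cdot B$ with both $A, B \in \mtt{F}$.

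I would handle the left case $\mtt{F} = \mtt{F}_l(\mc{S})$ first (the right case being symmetric). For type (i): if $s = \mathbf{1}_n$ then $As = A \in \mtt{F} \subseteq \mc{S} \cup \mtt{F}$; if $s \in \mc{S}^* \subseteq G(\N)^*$, then by definition of $\mtt{F}_l(\mc{S})$ we have $As \in \mc{S}$. For type (ii), $sA$: here $A \in G(\N)$, and I want $sA \in \mc{S} \cup \mtt{F}$. If $s = \mathbf{1}_n$ this is $A \in \mtt{F}$; if $s \in \mc{S}^*$, one can write $sA = s \cdot A$ and note that $A$ itself need not be in $\mc{S}$, so this direction is the genuinely delicate one — the point is that we do not need $sA \in \mc{S}$, only $sA \in \mc{S} \cup \mtt{F}$, but in fact I expect to argue $sA \in \mc{S}$ by a different route, or to observe that the multiplication $sA$ for $s \in \mc{S}^*$ lands in $\mc{S}$ because... actually the cleanest observation is: whichever element of $\mtt{F}$ we multiply, once we have \emph{one} factor that can absorb into the Frobenius condition the product drops into $\mc{S}$. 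For type (iii), $AB$ with $A,B \in \mtt{F}_l(\mc{S})$: write $AB = A \cdot B$ with $B \in G(\N)^*$ (since $B \neq \mathbf{1}_n$, as $\mathbf{1}_n \notin \mtt{F}_l(\mc{S})$ because $\mathbf{1}_n \in \mc{S}$), hence $AB \in \mc{S}$ directly from $A \in \mtt{F}_l(\mc{S})$. So the only subtle type is (ii), and I expect to resolve it by the symmetric observation combined with the structure: $sA$ with $s \in \mc{S}^*$ — here I would instead read the product as needing $A$ on the right, and since the relevant Frobenius condition is about left multiplication by $A$, I would argue that $sA \in \mc{S}$ must be checked by hand, or more precisely conclude the closure only needs the first and third cases once we also know $\mc{S} \cup \mtt{F}$ is commutative-free-of-obstruction.

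For the two-sided statement with $\mtt{C} \subseteq \mtt{F}_t(\mc{S})$: the key gain is that every $A \in \mtt{C} \subseteq \mtt{F}_t(\mc{S})$ satisfies \emph{both} $A \cdot G(\N)^* \subseteq \mc{S}$ and $G(\N)^* \cdot A \subseteq \mc{S}$. Now for any product of two elements $X, Y \in \mc{S} \cup \mtt{C}$: if both lie in $\mc{S}$, done; if $X \in \mtt{C}$ and $Y \neq \mathbf{1}_n$, then $Y \in G(\N)^*$ and $XY \in \mc{S}$ by the two-sided Frobenius property of $X$; if $Y \in \mtt{C}$ and $X \neq \mathbf{1}_n$, then $X \in G(\N)^*$ and $XY \in \mc{S}$ again by the two-sided property of $Y$; and if either factor is $\mathbf{1}_n$ the product is the other factor, which lies in $\mc{S} \cup \mtt{C}$. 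This covers all cases, so $\mc{S} \cup \mtt{C}$ is multiplicatively closed, contains $\mathbf{1}_n$, and is complement-finite in $G(\N)$; being also finitely generated (it is complement-finite in the finitely generated $G(\N)$, cf. the results of~\cite{CanSakran}), it is a unipotent numerical monoid.

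\textbf{Main obstacle.} The genuine difficulty is the one-sided case: for $\mtt{F} = \mtt{F}_l(\mc{S})$, a product $sA$ with $s \in \mc{S}^*$ and $A \in \mtt{F}_l(\mc{S})$ has the Frobenius element on the \emph{wrong side}, so the defining inclusion $A \cdot G(\N)^* \subseteq \mc{S}$ does not immediately apply. I expect the resolution to be that we never actually need this product to land in $\mc{S}$ as long as we can show $sA \in \mtt{F}_l(\mc{S}) \cup \mc{S}$ directly — or, more likely, the intended argument restricts attention to showing closure using only right-multiplication by elements of $\mc{S}$ on elements of $\mtt{F}_l(\mc{S})$ together with products within $\mtt{F}_l(\mc{S})$, and handles $\mc{S} \cdot \mtt{F}_l(\mc{S})$ by observing that by Remark~\ref{R:absurd}-type reasoning or by an entrywise/maximality argument such a product cannot escape $\mc{S} \cup \mtt{F}_l(\mc{S})$. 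That asymmetry — and pinning down exactly why $\mc{S}^* \cdot \mtt{F}_l(\mc{S}) \subseteq \mc{S} \cup \mtt{F}_l(\mc{S})$ — is the step I would spend the most care on; the two-sided statement, by contrast, is essentially immediate from the two-sided Frobenius condition.
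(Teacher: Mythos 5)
Your case analysis is organized correctly, and your treatment of the two-sided statement (the $\mtt{C}\subseteq\mtt{F}_t(\mc{S})$ part) is complete and essentially identical to the paper's. However, for the one-sided case $\mtt{F}=\mtt{F}_l(\mc{S})$ you explicitly leave the product $sA$ with $s\in\mc{S}^*$ and $A\in\mtt{F}_l(\mc{S})$ unresolved: you correctly identify it as the delicate case but then offer only speculation (``I expect to argue\dots'', ``or more likely\dots'') without carrying out any of the sketched routes. That is a genuine gap, and it is precisely the case that carries the content of the one-sided assertion. Note also that the target you are reaching for, namely $sA\in\mc{S}$, is in general the wrong one: the product need not fall into $\mc{S}$ at all; it can land in $\mtt{F}_l(\mc{S})$ itself.

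The fix is a one-line associativity argument, which is what the paper does. For any $C\in G(\N)^*$ we have $AC\in\mc{S}$ because $A\in\mtt{F}_l(\mc{S})$, and hence $(sA)C=s(AC)\in\mc{S}$ because $s\in\mc{S}$ and $\mc{S}$ is closed under products. Thus $(sA)\,G(\N)^*\subseteq\mc{S}$, so either $sA\in\mc{S}$, or $sA\notin\mc{S}$ and then $sA\in\mtt{F}_l(\mc{S})$ by the very definition of the left Frobenius set. Either way $sA\in\mc{S}\cup\mtt{F}_l(\mc{S})$, which closes the gap; the mirror-image argument handles $\mtt{F}_r(\mc{S})$. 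The rest of your proposal --- types (i) and (iii), and the observation that closure under products is all that needs checking since $\mc{S}\cup\mtt{F}$ contains $\mathbf{1}_n$ and is complement-finite --- is correct and matches the paper.
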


\begin{proof}
We will prove our claim for $\mtt{F}=\mtt{F}_l(\mc{S})$ only. 
The proofs of the other two cases are similar.

Let $A$ and $B$ be two elements from $\mc{S} \cup \mathtt{F}$.
We will show that $AB\in \mc{S} \cup \mathtt{F}$.
We have essentially four cases to consider:
\begin{enumerate}
\item Both $A$ and $B$ are from $\mc{S}$. In this case, we always have $AB\in \mc{S}$. 
\item $A\in \mtt{F}$ and $B\in \mc{S}$. In this case, since $A\in \mtt{F}$, we have $AB\in \mc{S}$.
\item $A\in \mc{S}$ and $B\in \mtt{F}$. In this case, we will show below that $AB\in \mc{S}\cup\mtt{F}$. 
\item Both $A$ and $B$ are from $\mtt{F}$. In this case, we will show below that $AB\in \mc{S}$.
\end{enumerate}
Let us continue with the third case.
For every $C\in G(\N)^*$ we have $BC\in \mc{S}$. 
At the same time, since $A\in \mc{S}$, the membership $BC \in \mc{S}$ implies that $(AB)C = A(BC) \in \mc{S}$.
In other words, we have $AB\in \mathtt{F}$ or $AB\in\mc{S}$. 
Finally, let us handle the fourth case. 
Let $\{A,B\}\subseteq \mathtt{F}$.
Then the assumption $A\in \mathtt{F}$ implies that $AB\in \mc{S}$. 
In conclusion, we see that if $\{A,B\}\subseteq \mc{S}\cup \mathtt{F}$, then $AB \in \mc{S}\cup \mathtt{F}$.
For the last assertion, if $\mtt{F}=\mtt{F}_t(\mc{S})$, then for every $A\in\mc{S}$ and $B,B'\in\mtt{F}$, we have $\{AB,BA,BB'\}\subset \mc{S}\subseteq\mc{S}\cup\mtt{F}$. Hence, $\mc{S}\cup \mtt{C}$ is a unipotent numerical monoid for every subset $\mtt{C}\subseteq\mtt{F}$. 
This finishes the proof of our proposition.
\end{proof}

Frobenius sets are quite useful for characterizing irreducibility.

\begin{Theorem}\label{T:twocharacterizationsofirr}
Let $\mc{S}$ be a unipotent numerical monoid.
Let $\mtt{F}(\cdot )$ denote one of $\mtt{F}_l(\cdot )$, $\mtt{F}_r(\cdot )$, or $\mtt{F}_{t}(\cdot )$. Suppose $|\mtt{F}(\mc{S})|=1$.
Then the following statements are equivalent: 
\begin{enumerate}
\item[(1)] $\mc{S}$ is irreducible.
\item[(2)]  $\mc{S}$ is maximal with respect to set inclusion in the set of unipotent numerical monoids $\mathcal{T}$ such that $\mathtt{F}(\mc{S}) \cap \mathcal{T} = \emptyset$. 
\end{enumerate}
\end{Theorem}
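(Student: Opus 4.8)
The plan is to prove the two implications separately, relying crucially on the single-element hypothesis $\mtt{F}(\mc{S})=\{C\}$ and on Proposition~\ref{P:SunionF}, which guarantees that $\mc{S}\cup\{C\}$ is a unipotent numerical monoid strictly larger than $\mc{S}$. Throughout, I would write the argument for $\mtt{F}=\mtt{F}_t$; the one-sided cases go through verbatim.

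\emph{From (2) to (1).} Suppose $\mc{S}$ is maximal among unipotent numerical monoids $\mc{T}$ with $C\notin\mc{T}$, and assume for contradiction that $\mc{S}=\mc{T}_1\cap\mc{T}_2$ with each $\mc{T}_i$ a unipotent numerical monoid properly containing $\mc{S}$. Since $\mc{S}\neq\mc{T}_i$, pick any element in $\mc{T}_i\setminus\mc{S}$; the key observation is that $C$ must lie in both $\mc{T}_1$ and $\mc{T}_2$. Indeed, if $C\notin\mc{T}_1$, then $\mc{T}_1$ is a unipotent numerical monoid strictly containing $\mc{S}$ and avoiding $C$, contradicting the maximality of $\mc{S}$ asserted in (2) — here I would need to check that $\mc{T}_1$ being strictly larger than $\mc{S}$ is genuinely incompatible with maximality, which is immediate since maximality is with respect to set inclusion. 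The same applies to $\mc{T}_2$. But then $C\in\mc{T}_1\cap\mc{T}_2=\mc{S}$, contradicting $C\in\mtt{F}_t(\mc{S})\subseteq G(\N)\setminus\mc{S}$. Hence no such decomposition exists and $\mc{S}$ is irreducible.

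\emph{From (1) to (2).} Suppose $\mc{S}$ is irreducible. First, $\mc{S}$ itself satisfies $\mtt{F}_t(\mc{S})\cap\mc{S}=\{C\}\cap\mc{S}=\emptyset$, so $\mc{S}$ belongs to the family in (2). Now suppose $\mc{S}$ is not maximal in this family: then there is a unipotent numerical monoid $\mc{T}$ with $\mc{S}\subsetneq\mc{T}$ and $C\notin\mc{T}$. The plan is to manufacture the forbidden decomposition. Set $\mc{T}_1:=\mc{T}$ and $\mc{T}_2:=\mc{S}\cup\{C\}$; by Proposition~\ref{P:SunionF} the latter is a unipotent numerical monoid, and it properly contains $\mc{S}$ because $C\notin\mc{S}$. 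Both $\mc{T}_1,\mc{T}_2$ properly contain $\mc{S}$, so it remains to show $\mc{T}_1\cap\mc{T}_2=\mc{S}$. The inclusion $\supseteq$ is clear. For $\subseteq$: take $A\in\mc{T}_1\cap\mc{T}_2$. Since $A\in\mc{T}_2=\mc{S}\cup\{C\}$, either $A\in\mc{S}$ (done) or $A=C$; but $A=C$ is impossible since $A\in\mc{T}_1=\mc{T}$ and $C\notin\mc{T}$. Hence $\mc{T}_1\cap\mc{T}_2=\mc{S}$, contradicting irreducibility. Therefore $\mc{S}$ is maximal in the family, which is statement (2).

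\emph{Main obstacle.} The argument is essentially formal once Proposition~\ref{P:SunionF} is in hand, so there is no deep obstacle; the one point demanding care is the use of $|\mtt{F}(\mc{S})|=1$. In the (2)$\Rightarrow$(1) direction this hypothesis is what forces the \emph{same} element $C$ to appear in every proper overgroup of $\mc{S}$ — without it, $\mc{T}_1$ might avoid one Frobenius element while $\mc{T}_2$ avoids another, and the contradiction collapses. I would therefore make explicit in the write-up exactly where single-elementedness is invoked, and note that the counterexample already promised in the paper (around Theorem~\ref{intro:T5}) shows the hypothesis cannot be dropped. The one-sided versions require only replacing $\mtt{F}_t$ by $\mtt{F}_l$ or $\mtt{F}_r$ and citing the corresponding case of Proposition~\ref{P:SunionF}.
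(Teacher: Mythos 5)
Your proposal is correct and follows essentially the same route as the paper: both directions hinge on Proposition~\ref{P:SunionF} applied to $\mc{S}\cup\mtt{F}(\mc{S})=\mc{S}\cup\{C\}$ as one intersectand, and on the single-element hypothesis forcing $C$ into any proper overmonoid avoided by the maximality condition. Your (2)$\Rightarrow$(1) is organized a little more cleanly than the paper's case analysis on whether $C$ lies in $\mtt{F}(\mc{T}_i)$ or in $\mc{T}_i$, but the logical content is identical.
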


\begin{proof}
We will prove our theorem for $\mtt{F}=\mtt{F}_l$ only. The other cases are proven similarly.

(1)$\Rightarrow$(2). Let $\mathcal{T}$ be a unipotent numerical monoid of $G(\N)$ such that $\mc{T}\cap \mathtt{F}(\mc{S}) = \emptyset$ and $\mc{S}\subseteq \mc{T}$.
Since $\mathcal{T} \cap \mathtt{F}(\mathcal{T}) = \emptyset$, evidently, $\mc{S}$ is given by the intersection $\mathcal{T} \cap (\mc{S} \cup \mathtt{F}(\mc{S}))$. 
By Proposition~\ref{P:SunionF}, the set $\mc{S}\cup \mtt{F}(\mc{S})$ is a unipotent numerical monoid.
Since $\mc{S}\cap \mtt{F}(\mc{S}) = \emptyset$, the irreducibility of $\mc{S}$ implies that $\mathcal{T}=\mc{S}$.
This finishes the proof of our assertion on the maximality of $\mc{S}$.

(2)$\Rightarrow$(1). Let us assume that $\mc{S}$ can be written as the intersection $\mathcal{T}_1\cap \mathcal{T}_2$, where $\mathcal{T}_i$ ($i\in \{1,2\}$) is a unipotent numerical monoid. 
Let $\mtt{F}(\mc{S})=\{C\}$.
Since $C G(\N)^*\subseteq \mc{S} = \mc{T}_1\cap \mc{T}_2$, one of the following holds:
\begin{enumerate}
\item $C\in \mtt{F}(\mc{T}_1) \cap \mtt{F}(\mc{T}_2)$, or 
\item either $C\in \mc{T}_1$ or $C\in \mc{T}_2$, but not both.
\end{enumerate} 
If $C\in \mtt{F}(\mc{T}_1) \cap \mtt{F}(\mc{T}_2)$ holds, then $\mtt{F}(\mc{S}) \cap \mc{T}_i = \emptyset$ for $i\in \{1,2\}$.  
Since $\mc{S}$ is maximal with respect to this property, we find that $\mc{T}_i = \mc{S}$ for some $i\in \{1,2\}$.  
In other words, $\mc{S}$ is irreducible. 
Now, if $C\in \mc{T}_1$ then $\mtt{F}(\mc{S})\cap\mc{T}_2=\emptyset$, which by hypothesis implies $\mc{S}=\mc{T}_2$. 
We can apply a similar argument if $C\in\mc{T}_2$ holds. 
Hence, we see that $\mc{S}$ is irreducible.
This finishes the proof of our theorem. 
\end{proof}

Given the additional assumption $|\mtt{F}(\mc{S})|=1$, we obtain a closely related characterization of irreducible unipotent numerical monoids. 
In the next section, we will show, following the introduction of the `set of special gaps' of a unipotent numerical monoid, that if $\mc{S}$ is irreducible, then $|\mtt{F}(\mc{S})|=1$.

\begin{Theorem}\label{T:threecharacterizationsofirr3}
Let $\mc{S}$ be a unipotent numerical monoid.
Let $\mtt{F}(\mc{S}) \in \{ \mtt{F}_l(\mc{S}),\mtt{F}_r(\mc{S}),\mtt{F}_t(\mc{S})\}$.
If $|\mtt{F}(\mc{S})|=1$ holds, then the following statements are equivalent: 
\begin{enumerate}
\item[(1)] $\mc{S}$ is irreducible.
\item[(2)]  $\mc{S}$ is maximal with respect to set inclusion in the set of unipotent numerical monoids $\mathcal{T}$ such that
$\mathtt{F}(\mathcal{T}) = \mathtt{F}(\mc{S})$.  
\item[(3)]  $\mc{S}$ is maximal with respect to set inclusion in the set of unipotent numerical monoids $\mathcal{T}$ such that
$\mathtt{F}(\mc{S}) \cap \mathcal{T} = \emptyset$. 
\end{enumerate}
\end{Theorem}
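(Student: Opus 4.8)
The plan is to show the cycle of implications (1)$\Rightarrow$(2)$\Rightarrow$(3)$\Rightarrow$(1), with the bulk of the work already provided by Theorem~\ref{T:twocharacterizationsofirr}, which gives (1)$\Leftrightarrow$(3). So the real task is to splice the new condition (2) into that equivalence, and the cleanest route is to prove (3)$\Rightarrow$(2)$\Rightarrow$(3), or perhaps more naturally (2)$\Rightarrow$(3) and (1)$\Rightarrow$(2). Write $\mtt{F}(\mc{S})=\{C\}$ throughout, using the hypothesis $|\mtt{F}(\mc{S})|=1$; I will carry out the argument for $\mtt{F}=\mtt{F}_l$, the other two cases being identical.

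First I would prove (1)$\Rightarrow$(2). Assume $\mc{S}$ is irreducible. Let $\mc{T}$ be a unipotent numerical monoid with $\mc{S}\subseteq \mc{T}$ and $\mtt{F}(\mc{T})=\mtt{F}(\mc{S})=\{C\}$. The key observation is that since $C=\mtt{F}_l(\mc{T})$, in particular $C\notin \mc{T}$, so $\mc{T}\cap \mtt{F}(\mc{S})=\emptyset$; hence by Theorem~\ref{T:twocharacterizationsofirr} (applied in direction (1)$\Rightarrow$(2)) the maximality of $\mc{S}$ among monoids disjoint from $\mtt{F}(\mc{S})$ forces $\mc{T}=\mc{S}$. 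This establishes that $\mc{S}$ is maximal subject to $\mtt{F}(\mathcal{T})=\mtt{F}(\mc{S})$. Conversely, for (2)$\Rightarrow$(3): suppose $\mc{S}$ is maximal among monoids $\mc{T}$ with $\mtt{F}(\mc{T})=\mtt{F}(\mc{S})$, and let $\mc{T}$ be any unipotent numerical monoid with $\mc{S}\subseteq\mc{T}$ and $\mc{T}\cap\mtt{F}(\mc{S})=\emptyset$, i.e. $C\notin\mc{T}$. I want to deduce $\mc{T}=\mc{S}$. The idea is to replace $\mc{T}$ by $\mc{T}\cup\mtt{F}_l(\mc{T})$ if necessary, or more directly to argue that $C\in\mtt{F}_l(\mc{T})$: since $CG(\N)^*\subseteq\mc{S}\subseteq\mc{T}$ and $C\notin\mc{T}$, we indeed have $C\in\mtt{F}_l(\mc{T})$. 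If $\mtt{F}_l(\mc{T})$ were strictly larger than $\{C\}$, pick $D\in\mtt{F}_l(\mc{T})\setminus\{C\}$; then $\mc{T}\cup\{D\}$ — or rather $\mc{S}\cup\{C,D\}$-type enlargements — would need to be analyzed to derive a contradiction with maximality in (2). The subtle point is that $\mtt{F}_l(\mc{T})$ need not equal $\{C\}$ a priori, so one has to work a bit to land back inside the class $\{\mathcal{T}':\mtt{F}(\mathcal{T}')=\mtt{F}(\mc{S})\}$; I would handle this by taking the largest monoid $\mc{T}'$ with $\mc{S}\subseteq\mc{T}'\subseteq\mc{T}$ and $\mtt{F}_l(\mc{T}')=\{C\}$ (such exists since $\mc{S}$ itself qualifies and the poset of such monoids is finite), then showing $\mc{T}'=\mc{T}$: any $D\in\mc{T}\setminus\mc{T}'$ sits above $\mc{T}'$ and, because $C\notin\mc{T}$, one checks $\mtt{F}_l(\mc{T}'\cup\{D\}\cdot\mc{T}')$ still equals $\{C\}$, contradicting maximality of $\mc{T}'$ — hence $\mc{T}'=\mc{T}$, so $\mtt{F}(\mc{T})=\{C\}$, and then maximality in (2) gives $\mc{T}=\mc{S}$.

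Finally, (3)$\Rightarrow$(1) is exactly the implication (2)$\Rightarrow$(1) of Theorem~\ref{T:twocharacterizationsofirr}, which I would simply cite, closing the loop. The main obstacle, as flagged above, is the step (2)$\Rightarrow$(3): moving from "disjoint from $\mtt{F}(\mc{S})$" back to "has the same Frobenius set $\{C\}$" requires showing that enlarging $\mc{S}$ while keeping $C$ outside cannot create a spurious new Frobenius element — equivalently, that the submonoid generated by $\mc{S}$ and any extra element still has singleton left Frobenius set $\{C\}$ as long as $C$ remains a gap. This is where the hypothesis $|\mtt{F}(\mc{S})|=1$ and Remark~\ref{R:absurd} (no $C\in G(\N)^*$ with one Frobenius element a multiple of another) do the work, and it is worth writing out carefully rather than dismissing as routine.
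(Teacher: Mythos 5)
Your overall architecture is sound, and two of the three implications are handled correctly: for (1)$\Rightarrow$(2) you rightly observe that $\mtt{F}(\mc{T})=\mtt{F}(\mc{S})$ forces $\mc{T}\cap\mtt{F}(\mc{S})=\emptyset$, so the maximality already guaranteed by Theorem~\ref{T:twocharacterizationsofirr} applies (the paper instead re-runs the intersection argument $\mc{S}=\mc{T}\cap(\mc{S}\cup\mtt{F}(\mc{S}))$ directly, but your shortcut is legitimate); and (3)$\Rightarrow$(1) is correctly cited from that theorem. The problem is the step you yourself flag as the crux, (2)$\Rightarrow$(3), which is left as an unverified sketch. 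Your proposed device --- take a maximal intermediate monoid $\mc{T}'$ with $\mc{S}\subseteq\mc{T}'\subseteq\mc{T}$ and $\mtt{F}_l(\mc{T}')=\{C\}$, then claim that adjoining any $D\in\mc{T}\setminus\mc{T}'$ ``still gives Frobenius set $\{C\}$'' --- does not reduce the difficulty at all: that claim is exactly the statement that enlarging inside $\mc{T}$ cannot create a new Frobenius element, which is the whole content of (2)$\Rightarrow$(3), and you give no argument for it. (Note also that by hypothesis (2) your $\mc{T}'$ is forced to equal $\mc{S}$, so the intermediate-monoid scaffolding collapses and you are back to the original question for $\langle\mc{S},D\rangle$.)

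What actually closes this step in the paper is a direct proof that $\mtt{F}(\mc{T})=\mtt{F}(\mc{S})$ for the given $\mc{T}$. The easy inclusion $\mtt{F}(\mc{S})\subseteq\mtt{F}(\mc{T})$ follows from $\mc{S}\subseteq\mc{T}$ and $\mc{T}\cap\mtt{F}(\mc{S})=\emptyset$. For the reverse inclusion one takes $A\in\mtt{F}_l(\mc{T})\setminus\mtt{F}_l(\mc{S})$, so $AC\notin\mc{S}$ for some $C\in G(\N)^*$, and repeatedly right-multiplies by elements of $G(\N)^*$; since $G(\N)\setminus\mc{S}$ is finite this terminates at an element of $\mtt{F}_l(\mc{S})\subseteq\mtt{F}_l(\mc{T})$ that is a proper right multiple of $A\in\mtt{F}_l(\mc{T})$, contradicting Remark~\ref{R:absurd}. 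You allude to Remark~\ref{R:absurd} ``doing the work'' but never set up the element to which it applies; without the push-forward-through-the-finite-gap-set construction the remark has nothing to bite on. A second, smaller issue: you assert the three cases are ``identical,'' but the two-sided case is not --- $A\notin\mtt{F}_t(\mc{T})$ only tells you $A$ fails to lie in $\mtt{F}_l(\mc{T})$ \emph{or} in $\mtt{F}_r(\mc{T})$, and the paper needs a separate (if mild) adaptation tracking which side fails and reducing to the one-sided inclusions $\mtt{F}_l(\mc{S})\subseteq\mtt{F}_l(\mc{T})$ along the way.
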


\begin{proof}
First, we prove our theorem for $\mtt{F}=\mtt{F}_l$. 
The proof of the case of $\mtt{F}=\mtt{F}_r$ is very similar, so, we omit it.

(1)$\Rightarrow$(2). Let $\mathcal{T}$ be a unipotent numerical monoid of $G(\N)$ such that $\mathtt{F}(\mathcal{T}) = \mathtt{F}(\mc{S})$. Let us assume that $\mathcal{T}\supseteq \mc{S}$. 
Since $\mathcal{T} \cap \mathtt{F}(\mathcal{T}) = \emptyset$, evidently, $\mc{S}$ is given by the intersection $\mathcal{T} \cap (\mc{S} \cup \mathtt{F}(\mc{S}))$. 
By Proposition~\ref{P:SunionF}, the set $\mc{S}\cup \mtt{F}(\mc{S})$ is a unipotent numerical monoid. 
But then the irreducibility of $\mc{S}$ implies that $\mathcal{T}=\mc{S}$.
This shows that $\mc{S}$ is maximal.

(2)$\Rightarrow$(3). Let us assume that $\mc{S}$ is contained in a unipotent numerical monoid $\mathcal{T}$ such that $\mathtt{F}(\mc{S}) \cap \mathcal{T} = \emptyset$. We will show that $\mc{S}=\mc{T}$. 

First, let us assume that there exists $A\in \mtt{F}(\mc{S})\setminus \mtt{F}(\mc{T})$.
This means that $AG(\N)^*\subset \mc{S}$ but there exists $C\in G(\N)^*$ such that $AC \in \mc{S}\setminus \mc{T}$. 
Since $\mc{S}$ is a subset of $\mc{T}$ and since $AC$ is an element of $\mc{S}$, the last statement is absurd. 
In other words, we have $\mtt{F}(\mc{S})\setminus \mtt{F}(\mc{T})=\emptyset$, implying that $\mtt{F}(\mc{S})\subseteq \mtt{F}(\mc{T})$.

Next, let us assume that $A\in \mtt{F}(\mc{T})\setminus \mtt{F}(\mc{S})$.
This means that $AG(\N)^*\subset \mc{T}$ but there exists $C\in G(\N)^*$ such that $AC \in \mc{T}\setminus \mc{S}$. 
If for some $E\in G(\N)^*$ the element $ACE$ is still not in $\mc{S}$, then we replace $C$ by $C E$.
Then we repeat this procedure continually. 
Since $G(\N)\setminus \mc{S}$ is a finite set, our procedure will come to an end after a finite number of replacements. 
The resulting element, which we still denote by $AC$ has the property that $AC\notin \mc{S}$ but $AC G(\mb{N})^*\subseteq \mc{S}$.
Said differently, we found an element $AC$ of $\mtt{F}(\mc{S})$. 
But we already know that $\mtt{F}(\mc{S})\subseteq \mtt{F}(\mc{T})$. 
By Remark~\ref{R:absurd} (2), this is absurd since the elements of $\mtt{F}(\mc{T})$ can not be obtained from each other by multiplying by an element from $G(\N)^*$. 
Hence, we conclude that $\mtt{F}(\mc{S})= \mtt{F}(\mc{T})$. 
Now, our assumption on the maximality of $\mc{S}$ implies that $\mc{S}= \mc{T}$. 
This finishes the proof of (2)$\Rightarrow$(3).

(3)$\Rightarrow$(1). We already know this implication from Theorem~\ref{T:twocharacterizationsofirr}.
Hence, the proof of our theorem is complete for $\mtt{F} \in \{ \mtt{F}_l, \mtt{F}_r\}$. 
\medskip

We now proceed under the assumption that $\mtt{F}=\mtt{F}_t$. 
The proofs of the implications (1)$\Rightarrow$(2) and (3)$\Rightarrow$(1) remain valid in this case. 
We are left with proving the assertion (2)$\Rightarrow$(3).
This part of the proof will be a subtle adaptation of the arguments used for the similar implication in the case of $\mtt{F}=\mtt{F}_l$.
To distinguish the Frobenius sets here, we will employ subscripts.

Let us assume that $\mc{S}$ is contained in a unipotent numerical monoid $\mathcal{T}$ such that $\mathtt{F}_t(\mc{S}) \cap \mathcal{T} = \emptyset$. 
We will show that $\mc{S}=\mc{T}$. 

First, let us assume that there exists $A\in \mtt{F}_t(\mc{S})\setminus \mtt{F}_t(\mc{T})$.
Since $A\notin \mtt{F}_t(\mc{T})$ we know that either $A\notin \mtt{F}_l(\mc{T})$ or $A\notin \mtt{F}_r(\mc{T})$.
Without loss of generality, we may assume that $A\notin \mtt{F}_l(\mc{T})$. 
This means that there exists $C\in G(\N)^*$ such that $AC \notin \mc{T}$.
Now, since $A\in \mtt{F}_t(\mc{S}) \subset \mtt{F}_l(\mc{S})$, we know that $A$ is an element of $G(\N) \setminus \mc{S}^*$ such that $AG(\N)^*\subseteq \mc{S}$. In particular, we know that $AC\in \mc{S}$. 
But this is absurd since we assumed that $\mc{S}\subseteq \mc{T}$.
In conclusion, we have $\mtt{F}_t(\mc{S})\subseteq \mtt{F}_t(\mc{T})$.
We emphasize our previous observation that the inclusion $\mtt{F}_l(\mc{S})\subseteq \mtt{F}_l(\mc{T})$ holds as well.
This reiteration will be useful in the next paragraph.

Next, let us assume that $A\in \mtt{F}_t(\mc{T})\setminus \mtt{F}_t(\mc{S})$.
By using the argument as in the previous paragraph, we may assume without loss of generality that $A\in \mtt{F}_l(\mc{T})\setminus \mtt{F}_l(\mc{S})$.
This means that 1) $AG(\N)^*\subseteq \mc{T}$, and 2) there exists $C\in G(\N)^*$ such that $AC \notin \mc{S}$.
If for some $E\in G(\N)^*$ the element $ACE$ is still not in $\mc{S}$, then we replace $C$ by $C E$.
Then we repeat this procedure continually. 
Since $G(\N)\setminus \mc{S}$ is a finite set, our procedure will come to an end after a finite number of replacements. 
The resulting element, which we still denote by $AC$ has the property that $AC\notin \mc{S}$ but $AC G(\mb{N})^*\subseteq \mc{S}$.
Said differently, we found an element $AC$ of $\mtt{F}_l(\mc{S})$. 
But we already know that $\mtt{F}_l(\mc{S})\subseteq \mtt{F}_l(\mc{T})$. 
By Remark~\ref{R:absurd} (2), this is absurd since the elements of $\mtt{F}_l(\mc{T})$ can not be obtained from each other by multiplying by an element from $G(\N)^*$. 
Hence, we conclude that there is no such $A$. 
In other words, we obtained the equality $\mtt{F}_t(\mc{S})= \mtt{F}_t(\mc{T})$. 
Now, our assumption on the maximality of $\mc{S}$ implies that $\mc{S}= \mc{T}$. 
This finishes the proof of (2)$\Rightarrow$(3).
\end{proof}

We now present a sufficient condition for the irreducibility of a unipotent numerical monoid that will be useful when we investigate the symmetric and pseudo-symmetric unipotent numerical monoids. 
We note that this theorem was stated as Theorem~\ref{intro:T5} in the introduction.

\begin{Theorem}\label{T:irreducibility}
Let $\mc{S}$ be a unipotent numerical monoid.
If one of the following conditions hold true, then $\mc{S}$ is irreducible: 
\begin{enumerate}
\item[(1)] $|\mtt{F}_l(\mc{S})|=1$ and for every $A\in G(\N)\setminus \mc{S}$ we have either $\mtt{F}_l(\mc{S}) \cap A \mc{S} \neq \emptyset$ or
$\mtt{F}_l(\mc{S}) \cap  \mc{S} A\neq \emptyset$; 
\item[(2)] $|\mtt{F}_r(\mc{S})|=1$ and for every $A\in G(\N)\setminus \mc{S}$ we have either $\mtt{F}_r(\mc{S}) \cap \mc{S}A \neq \emptyset$ or 
$\mtt{F}_r(\mc{S}) \cap A\mc{S} \neq \emptyset$;
\item[(3)] $|\mtt{F}_t(\mc{S})|=1$ and for every $A\in G(\N)\setminus \mc{S}$ we have either $\mtt{F}_t(\mc{S}) \cap \mc{S}A \neq \emptyset$ or 
$\mtt{F}_t(\mc{S}) \cap  A\mc{S} \neq \emptyset$.
\end{enumerate}
\end{Theorem}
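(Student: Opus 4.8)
The strategy is to reduce all three cases to the characterization already available from Theorem~\ref{T:twocharacterizationsofirr}, namely that when $|\mtt{F}(\mc{S})|=1$, the monoid $\mc{S}$ is irreducible if and only if it is maximal among the unipotent numerical monoids $\mc{T}$ with $\mtt{F}(\mc{S})\cap \mc{T}=\emptyset$. Thus for, say, case (1), it suffices to assume that $\mc{T}$ is a unipotent numerical monoid with $\mc{S}\subsetneq \mc{T}$ and $\mtt{F}_l(\mc{S})\cap \mc{T}=\emptyset$, and to derive a contradiction. Writing $\mtt{F}_l(\mc{S})=\{C\}$, the containment $\mc{S}\subsetneq \mc{T}$ gives us some $A\in \mc{T}\setminus \mc{S}$, i.e.\ $A\in G(\N)\setminus \mc{S}$, and the hypothesis then supplies either $C\in A\mc{S}$ or $C\in \mc{S}A$.

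In the first subcase, $C=AB$ for some $B\in \mc{S}\subseteq \mc{T}$; since $A\in \mc{T}$ and $\mc{T}$ is closed under products, we get $C\in \mc{T}$, contradicting $\mtt{F}_l(\mc{S})\cap \mc{T}=\emptyset$. The second subcase, $C=BA$ with $B\in \mc{S}\subseteq \mc{T}$, is handled identically: $C=BA\in \mc{T}$, again a contradiction. Hence no such proper $\mc{T}$ exists, so $\mc{S}$ is maximal with the stated property, and Theorem~\ref{T:twocharacterizationsofirr} yields irreducibility. Cases (2) and (3) follow by the same argument, invoking the $\mtt{F}_r$ and $\mtt{F}_t$ versions of Theorem~\ref{T:twocharacterizationsofirr}; note that in each case the relevant ``$A\mc{S}$ or $\mc{S}A$'' disjunction from the hypothesis is exactly what is needed, since the only property of $\mc{T}$ being used is its closure under multiplication together with $\mc{S}\subseteq \mc{T}$.

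The argument is short and essentially formal once Theorem~\ref{T:twocharacterizationsofirr} is in hand; the only point requiring a little care is making sure the element $A$ produced from $\mc{T}\setminus \mc{S}$ genuinely lies in $G(\N)\setminus \mc{S}$ so that the hypothesis applies—this is immediate since $\mc{T}\subseteq G(\N)$. I would also remark that one does \emph{not} need the full strength of both disjuncts in the hypothesis for a single fixed $\mc{T}$: whichever of $C\in A\mc{S}$ or $C\in \mc{S}A$ holds, the conclusion $C\in \mc{T}$ follows from $B\in \mc{S}\subseteq \mc{T}$ and closure. The main (and minor) obstacle is bookkeeping the three parallel cases; there is no real analytic or combinatorial difficulty, in contrast to the converse direction discussed after the theorem, where genuine counterexamples appear.
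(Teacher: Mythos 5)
Your proposal is correct and follows essentially the same route as the paper: both reduce to Theorem~\ref{T:twocharacterizationsofirr}, take a proper overmonoid $\mc{T}$ disjoint from the one-element Frobenius set, pick $A\in \mc{T}\setminus\mc{S}$, and use the hypothesis to place $AB$ or $BA$ (an element of $\mtt{F}(\mc{S})$) inside $\mc{T}$, a contradiction. The only difference is presentational—you prove maximality directly while the paper argues by assuming reducibility—but the underlying argument is identical.
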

\begin{proof}
We will prove (1) only. The other assertions are proven similarly. 
		
Let $\mc{S}$ be a unipotent numerical monoid such that $|\mtt{F}_l(\mc{S})|=1$ and for every $A\in G(\N)\setminus \mc{S}$ 
we have either $\mtt{F}_l(\mc{S}) \cap A \mc{S} \neq \emptyset$ or $\mtt{F}_l(\mc{S}) \cap  \mc{S} A\neq \emptyset$.
We assume towards a contradiction that $\mc{S}$ is reducible. 
Then, by Theorem~\ref{T:twocharacterizationsofirr}, there exists a unipotent numerical monoid $\mc{T}$, properly containing $\mc{S}$, such that $\mc{T}\cap \mtt{F}_l(\mc{S}) = \emptyset$. 
Let $A\in \mc{T}\setminus \mc{S}$.
Then, by the hypothesis of our theorem, we have either $\mtt{F}_l(\mc{S}) \cap A \mc{S} \neq \emptyset$ or
$\mtt{F}_l(\mc{S}) \cap  \mc{S} A\neq \emptyset$.
In other words, there exists $B\in \mc{S}$ such that either $AB\in \mtt{F}_l(\mc{S})$ or $BA\in \mtt{F}_l(\mc{S})$ holds. 
Since $\mc{S}$ is a subset of $\mc{T}$ we see that both $A$ and $B$ are from $\mc{T}$.
It follows that either $AB\in \mc{T}$ or $BA\in \mc{T}$ holds. 
But this contradicts our assumption that $\mc{T}\cap \mtt{F}_l(\mc{S})=\emptyset$.
Therefore, $\mc{S}$ is an irreducible unipotent numerical monoid. 
\end{proof}

\begin{Example}
Let $\mc{S}\subset \mathbf{P}(3,\N)$ be the unipotent numerical monoid defined as in Figure~\ref{F:S2}.
The points colored in blue represent the minimal generators.
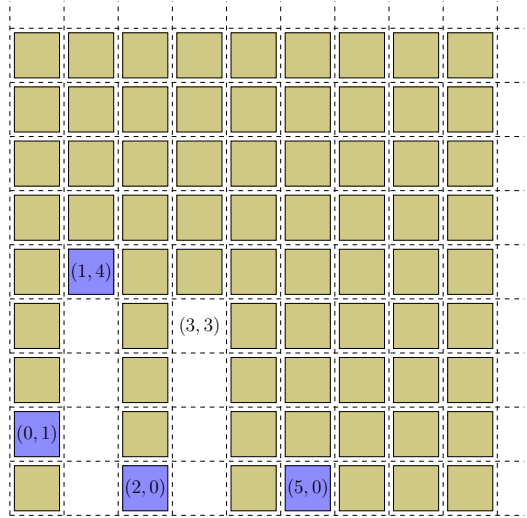
\begin{figure}[htp]
\begin{center}
\scalebox{.6}{
\begin{tikzpicture}[scale=1.2]

\draw (0,0)  node[fill=olive!45!, minimum size=1cm,draw] {};
\draw (0,1)  node[fill=blue!45!, minimum size=1cm,draw] {};
\node at (0,1) {$(0,1)$};

\foreach \i in {4,...,8} {\draw (\i,1)  node[fill=olive!45!, minimum size=1cm,draw] {};};
\foreach \i in {4,...,8} {\draw (\i,2)  node[fill=olive!45!, minimum size=1cm,draw] {};};
\foreach \i in {4,...,8} {\draw (\i,3)  node[fill=olive!45!, minimum size=1cm,draw] {};};
\foreach \i in {0,...,8} {\draw (\i,4)  node[fill=olive!45!, minimum size=1cm,draw] {};};
\foreach \i in {0,...,8} {\draw (\i,5)  node[fill=olive!45!, minimum size=1cm,draw] {};};
\foreach \i in {0,...,8} {\draw (\i,6)  node[fill=olive!45!, minimum size=1cm,draw] {};};
\foreach \i in {0,...,8} {\draw (\i,7)  node[fill=olive!45!, minimum size=1cm,draw] {};};
\foreach \i in {0,...,8} {\draw (\i,8)  node[fill=olive!45!, minimum size=1cm,draw] {};};
\foreach \i in {1,...,3} {\draw (2,\i)  node[fill=olive!45!, minimum size=1cm,draw] {};};
\foreach \i in {6,...,8} {\draw (\i,0)  node[fill=olive!45!, minimum size=1cm,draw] {};};

\draw (0,2)  node[fill=olive!45!, minimum size=1cm,draw] {}; 
\draw (0,3)  node[fill=olive!45!, minimum size=1cm,draw] {}; 
\draw (4,0)  node[fill=olive!45!, minimum size=1cm,draw] {}; 
\draw (5,0)  node[fill=blue!45!, minimum size=1cm,draw] {}; 
\draw (2,0)  node[fill=blue!45!, minimum size=1cm,draw] {}; 
\draw (1,4)  node[fill=blue!45!, minimum size=1cm,draw] {};

\node at (1,4) {$(1,4)$};
\node at (2,0) {$(2,0)$};
\node at (5,0) {$(5,0)$};
\node at (3,3) {$(3,3)$};

\foreach \i in {0,...,9} {\draw [dashed] (\i-.5,-.5) -- (\i-.5,9);};
\foreach \i in {0,...,9} {\draw [dashed] (-.5,\i-0.5) -- (9,\i-0.5);};

\end{tikzpicture}
}
\end{center}
\caption{An irreducible, commutative unipotent numerical monoid in $\mathbf{P}(3,\Z)$.}
\label{F:S2}
\end{figure}

Then we have 
\begin{align*}
\mtt{F}_l(\mc{S}) = \{ (3,3) \}\ \text{ and } \ 
\mtt{Gap}(\mc{S}) = \{ (1,0),(1,1),(1,2),(1,3), (3,0),(3,1), (3,2),(3,3)\}.
\end{align*}
Now we check the condition (1) in Theorem~\ref{T:irreducibility}:
\begin{align*}
(1,3) + (2,0) &= (3,3),  \\
(1,2) + (2,1) &= (3,3),  \\
(1,1) + (2,2) &= (3,3),  \\
(1,0) + (2,3) &= (3,3),  \\
(3,0) + (0,3) &= (3,3),  \\
(3,1) + (0,2) &= (3,3),  \\
(3,2) + (0,1) &= (3,3), \\
(3,3) + (0,0) &= (3,3).
\end{align*}
In these equalities, the second summands on the left hand side are from $\mc{S}$ while the first summands are from $\mtt{Gap}(\mc{S})$.  
Hence, according to Theorem~\ref{T:irreducibility}, $\mc{S}$ is an irreducible unipotent numerical monoid. 
\end{Example}

We will show by an example that the converse of Theorem~\ref{T:irreducibility} need not be true. 
\begin{Example}\label{E:conversefails}
Let $\mathcal{S} \subset \mathbf{P}(3,\mathbb{N})$ be the commutative unipotent numerical monoid defined as in Figure~\ref{F:S22}. 
The points colored in blue are the minimal generators of the monoid.
In this example, the Frobenius set of $\mathcal{S}$ is given by $\mathtt{F}_t(\mathcal{S}) = \{ (2,2) \}$. Let $A := (1,1)$. 
It is easy to check that neither $\mathtt{F}_t(\mathcal{S}) \cap A \mathcal{S} \neq \emptyset$ nor $\mathtt{F}_t(\mathcal{S}) \cap \mathcal{S} A \neq \emptyset$ holds. 
Hence, part (3) of Theorem~\ref{T:irreducibility} fails. 
Note that since $\mathcal{S}$ is commutative, we do not need to check parts (1) and (2) of Theorem~\ref{T:irreducibility}. 
This provides an example where the converse of Theorem~\ref{T:irreducibility} does not hold. 
Nevertheless, in this case, $\mathcal{S}$ is irreducible. 
There are several ways to see this. 
A direct calculation can be performed, and one method involves using the ``special gaps" of a generalized numerical semigroup, as described in~\cite[Proposition 2.5]{CFPU2019}. Another useful approach is presented in~\cite[Theorem 5.7]{CFPU2019}. 
Given that the genus of $\mathcal{S}$ is 5, we have $2\mathtt{g}(\mathcal{S}) - 1 = 9$.
At the same time, the unique element $F$ of the Frobenius set of $\mathcal{S}$ is given by $F = (F^{(1)}, F^{(2)}) = (2,2)$. 
Since $(F^{(1)} + 1)(F^{(2)} + 1) = 9 = 2\mathtt{g}(\mathcal{S}) - 1$, according to~\cite[Theorem 5.7]{CFPU2019}, $\mathcal{S}$ is ``pseudo-symmetric'' and hence irreducible. 
(Later in this article, we will introduce pseudo-symmetric unipotent numerical semigroups, which generalize pseudo-symmetric generalized numerical semigroups.)

\begin{figure}[htp]
\begin{center}
\scalebox{.6}{\begin{tikzpicture}[scale=1.2]

\draw (0,0)  node[fill=olive!45!, minimum size=1cm,draw] {};

\foreach \i in {3,...,5} {\draw (\i,0)  node[fill=olive!45!, minimum size=1cm,draw] {};};
\foreach \i in {3,...,5} {\draw (\i,1)  node[fill=olive!45!, minimum size=1cm,draw] {};};
\foreach \i in {3,...,5} {\draw (\i,2)  node[fill=olive!45!, minimum size=1cm,draw] {};};
\foreach \i in {0,...,5} {\draw (\i,3)  node[fill=olive!45!, minimum size=1cm,draw] {};};
\foreach \i in {0,...,5} {\draw (\i,4)  node[fill=olive!45!, minimum size=1cm,draw] {};};
\foreach \i in {0,...,5} {\draw (\i,5)  node[fill=olive!45!, minimum size=1cm,draw] {};};

\draw (0,5)  node[fill=blue!45!, minimum size=1cm,draw] {}; 
\draw (0,4)  node[fill=blue!45!, minimum size=1cm,draw] {}; 
\draw (0,3)  node[fill=blue!45!, minimum size=1cm,draw] {}; 
\draw (2,0)  node[fill=blue!45!, minimum size=1cm,draw] {}; 
\draw (2,0)  node[fill=blue!45!, minimum size=1cm,draw] {}; 
\draw (2,1)  node[fill=blue!45!, minimum size=1cm,draw] {}; 
\draw (1,2)  node[fill=blue!45!, minimum size=1cm,draw] {};
\draw (1,3)  node[fill=blue!45!, minimum size=1cm,draw] {};
\draw (1,4)  node[fill=blue!45!, minimum size=1cm,draw] {};
\draw (3,0)  node[fill=blue!45!, minimum size=1cm,draw] {};
\draw (3,1)  node[fill=blue!45!, minimum size=1cm,draw] {};
						
\node at (2,0) {$(2,0)$};
\node at (2,1) {$(2,1)$};
\node at (3,1) {$(3,1)$};
\node at (3,0) {$(3,0)$};
\node at (1,2) {$(1,2)$};
\node at (1,3) {$(1,3)$};
\node at (1,4) {$(1,4)$};
\node at (0,3) {$(0,3)$};
\node at (0,4) {$(0,4)$};
\node at (0,5) {$(0,5)$};
\node at (2,2) {\color{purple}$(2,2)$};
\node at (1,1) {\color{gray}$(1,1)$};

\foreach \i in {0,...,6} {\draw [dashed] (\i-.5,-.5) -- (\i-.5,5);};
\foreach \i in {0,...,6} {\draw [dashed] (-.5,\i-0.5) -- (5.5,\i-0.5);};
\end{tikzpicture}}
\end{center}
\caption{The converse of Theorem~\ref{T:irreducibility} fails.}
\label{F:S22}
\end{figure}
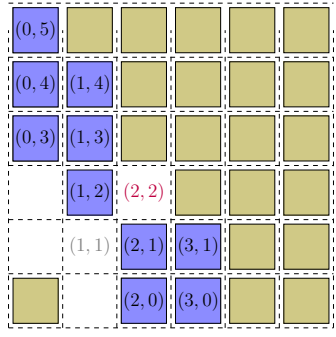
\end{Example}

\section{Pseudo-Frobenius Sets}\label{S:Pseudo}

Let $\mc{S}$ be a unipotent numerical monoid.
Let $\mc{I}$ be a relative right ideal of $\mc{S}$. 
We now introduce the \red{\em left pseudo-Frobenius set of $\mc{I}$}:
\begin{align*}
\mathtt{PF}_l(\mc{I}) := \{A \in G(\N) \mid A\notin \mc{I}^* \text{ and } A\mc{S}^* \subseteq \mc{I}\}.
\end{align*}
The \red{\em right pseudo-Frobenius set of $\mc{I}$} is defined by:
\begin{align*}
\mathtt{PF}_r(\mc{I}) := \{A \in G(\N) \mid A\notin \mc{I}^* \text{ and } \mc{S}^*A \subseteq \mc{I}\}.
\end{align*}
Finally, we define the \red{\em two-sided pseudo-Frobenius set of $\mc{I}$} here. 
\begin{align*}
\mathtt{PF}_{t}(\mc{I}) := \{A \in G(\N) \mid A\notin \mc{I}^*\ \text{ and } \{ AC,CA\} \subseteq \mc{I} \ \text{ for all }\ C\in \mc{S}^*\}.
\end{align*}

The left (resp. right, two-sided) pseudo-Frobenius set of $\mc{S}$ as a unipotent numerical monoid is defined as the left (resp. right, two-sided)  pseudo-Frobenius set of $\mc{S}$ as a right ideal of itself.

\begin{Example}\label{E:linearly}
Let $\mc{S}$ denote the commutative unipotent numerical monoid defined as in Figure~\ref{F:S3}.
The points colored in blue are the minimal generators of $\mc{S}$. 
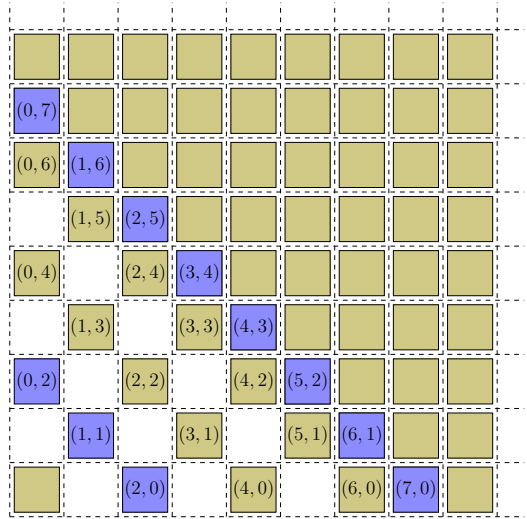
\begin{figure}[htp]
\begin{center}
\scalebox{.6}{
\begin{tikzpicture}[scale=1.2]

\draw (0,0)  node[fill=olive!45!, minimum size=1cm,draw] {};

\foreach \i in {6,...,8} {\draw (\i,0)  node[fill=olive!45!, minimum size=1cm,draw] {};}; 
\foreach \i in {5,...,8} {\draw (\i,1)  node[fill=olive!45!, minimum size=1cm,draw] {};};
\foreach \i in {4,...,8} {\draw (\i,2)  node[fill=olive!45!, minimum size=1cm,draw] {};};
\foreach \i in {3,...,8} {\draw (\i,3)  node[fill=olive!45!, minimum size=1cm,draw] {};};
\foreach \i in {2,...,8} {\draw (\i,4)  node[fill=olive!45!, minimum size=1cm,draw] {};};
\foreach \i in {1,...,8} {\draw (\i,5)  node[fill=olive!45!, minimum size=1cm,draw] {};};
\foreach \i in {0,...,8} {\draw (\i,6)  node[fill=olive!45!, minimum size=1cm,draw] {};};
\foreach \i in {0,...,8} {\draw (\i,7)  node[fill=olive!45!, minimum size=1cm,draw] {};};
\foreach \i in {0,...,8} {\draw (\i,8)  node[fill=olive!45!, minimum size=1cm,draw] {};};
\foreach \i in {0,...,4} {\draw (4-\i,\i)  node[fill=olive!45!, minimum size=1cm,draw] {};};
\foreach \i in {0,...,2} {\draw (2-\i,\i)  node[fill=blue!45!, minimum size=1cm,draw] {};};
\foreach \i in {0,...,7} {\draw (7-\i,\i)  node[fill=blue!45!, minimum size=1cm,draw] {};};

\node at (0,2) {$(0,2)$};
\node at (1,1) {$(1,1)$};
\node at (2,0) {$(2,0)$};

\node at (4,0) {$(4,0)$};
\node at (3,1) {$(3,1)$};
\node at (2,2) {$(2,2)$};
\node at (1,3) {$(1,3)$};
\node at (0,4) {$(0,4)$};

\node at (6,0) {$(6,0)$};
\node at (5,1) {$(5,1)$};
\node at (4,2) {$(4,2)$};
\node at (3,3) {$(3,3)$};
\node at (2,4) {$(2,4)$};
\node at (1,5) {$(1,5)$};
\node at (0,6) {$(0,6)$};
\node at (7,0) {$(7,0)$};
\node at (6,1) {$(6,1)$};
\node at (5,2) {$(5,2)$};
\node at (4,3) {$(4,3)$};
\node at (3,4) {$(3,4)$};
\node at (2,5) {$(2,5)$};
\node at (1,6) {$(1,6)$};
\node at (0,7) {$(0,7)$};

\foreach \i in {0,...,9} {\draw [dashed] (\i-.5,-.5) -- (\i-.5,9);};
\foreach \i in {0,...,9} {\draw [dashed] (-.5,\i-0.5) -- (9,\i-0.5);};

\end{tikzpicture}}

\end{center}
\caption{A commutative unipotent numerical monoid in $\mathbf{P}(3,\N)$.}
\label{F:S3}
\end{figure}
It is easy to check that the pseudo-Frobenius set of $\mc{S}$ is given by 
\begin{align*}
\mtt{PF}_t(\mc{S}) = \{ (0,5), (1,4), (2,3), (3,2), (4,1), (5,0) \},
\end{align*}
which is also the Frobenius set of $\mc{S}$. 
\end{Example}

The pseudo-Frobenius sets can be characterized as follows.
\begin{Proposition}\label{P:leftrightPFS}
We maintain our previous notation.  
Then we have 
\begin{enumerate}
\item[(1)] $\mathtt{PF}_l(\mc{I}) = \max_{\leq_{\mc{S},l}} (G(\N) \setminus \mc{I}^*)$,
\item[(2)] $\mathtt{PF}_r(\mc{I}) = \max_{\leq_{\mc{S},r}} (G(\N) \setminus \mc{I}^*)$,
\item[(3)] $\mathtt{PF}_{t}(\mc{I}) = \max_{\leq_{\mc{S},l}} (G(\N) \setminus \mc{I}^*)\cap \max_{\leq_{\mc{S},r}} (G(\N) \setminus \mc{I}^*)$.
\end{enumerate}
\end{Proposition}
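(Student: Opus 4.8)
The plan is to establish (1) and (2) by directly unwinding the definition of a maximal element of a poset, and then to obtain (3) as a formal consequence. Since the three $\mc{S}$-orders are defined symmetrically, I would spell out the argument for (1) and indicate that (2) follows by exchanging left multiplication for right multiplication throughout.

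First I would record the following translation of $\leq_{\mc{S},l}$: for $A,B\in G(\N)$ one has $A\leq_{\mc{S},l} B$ exactly when $B\in A\mc{S}$, and this relation is strict precisely when $B\in A\mc{S}^*$ (if $AC=A$ with $C\in\mc{S}$ then $C=\mathbf{1}_n$, using that $A$ is invertible). Consequently, $A$ is a maximal element of the poset $(G(\N)\setminus\mc{I}^*,\leq_{\mc{S},l})$ if and only if $A\in G(\N)\setminus\mc{I}^*$ and there is no $C\in\mc{S}^*$ for which $AC\in G(\N)\setminus\mc{I}^*$; equivalently, $A\notin\mc{I}^*$ and $AC\in\mc{I}^*$ for every $C\in\mc{S}^*$.

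The one point that I would isolate as an explicit step is that in this last condition $\mc{I}^*$ may be replaced by $\mc{I}$: for $A\in G(\N)$ and $C\in\mc{S}^*$ the product $AC$ is never the identity. Indeed, if $A=\mathbf{1}_n$ then $AC=C\in\mc{S}^*$, while if $A\neq\mathbf{1}_n$ then $AC=\mathbf{1}_n$ would force $C=A^{-1}$, which is impossible since $A^{-1}$ has a negative entry whereas $C\in G(\N)$. Hence $AC\in\mc{I}^*\iff AC\in\mc{I}$, and the maximality condition becomes ``$A\notin\mc{I}^*$ and $A\mc{S}^*\subseteq\mc{I}$'', which is exactly the definition of $\mathtt{PF}_l(\mc{I})$. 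This proves (1); running the mirror argument with $\leq_{\mc{S},r}$ and the products $CA$ gives (2).

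For (3), I would simply observe from the definitions that $\mathtt{PF}_t(\mc{I})=\mathtt{PF}_l(\mc{I})\cap\mathtt{PF}_r(\mc{I})$, because the condition ``$\{AC,CA\}\subseteq\mc{I}$ for all $C\in\mc{S}^*$'' is the conjunction of ``$A\mc{S}^*\subseteq\mc{I}$'' and ``$\mc{S}^*A\subseteq\mc{I}$'' (with the same side condition $A\notin\mc{I}^*$), and then substitute the formulas from (1) and (2). I do not expect a genuine obstacle in this proof; the only subtlety is the interchange of $\mc{I}$ and $\mc{I}^*$ in the middle step, which is precisely why I would state and verify it separately rather than treat it as automatic.
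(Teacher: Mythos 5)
Your proof is correct and follows essentially the same route as the paper's: both hinge on the observation that, by cancellativity, a strict relation $A \lneq_{\mc{S},l} B$ forces $B=AC$ for some $C\in\mc{S}^*$, so maximality in $(G(\N)\setminus\mc{I}^*,\leq_{\mc{S},l})$ unwinds directly to $A\mc{S}^*\subseteq\mc{I}$. The only organizational differences are that you derive (3) formally from (1), (2) and the definitional identity $\mathtt{PF}_t(\mc{I})=\mathtt{PF}_l(\mc{I})\cap\mathtt{PF}_r(\mc{I})$ (which the paper instead runs as a separate direct argument and later records as a corollary of this proposition), and that you treat the interchange of $\mc{I}$ with $\mc{I}^*$ explicitly, a point the paper's proof passes over silently.
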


Before proving our proposition, we have a preliminary observation.
\begin{Lemma}\label{L:leftrightintheintersection}
Let $\mc{I}$ be a relative right ideal of $\mc{S}$.
Then we have the inclusion
\begin{align*}
\max_{\leq_{\mc{S},t}} (G(\N) \setminus \mc{I}^*)\  \supseteq  \ 
\max_{\leq_{\mc{S},l}} (G(\N) \setminus \mc{I}^*)\  \cap  \ \max_{\leq_{\mc{S},r}} (G(\N) \setminus \mc{I}^*).
\end{align*}
\end{Lemma}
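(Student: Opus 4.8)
The plan is to take an arbitrary element $A$ in the right-hand side and show that it is maximal with respect to $\leq_{\mc{S},t}$ in $G(\N)\setminus\mc{I}^*$. So suppose $A \in \max_{\leq_{\mc{S},l}}(G(\N)\setminus\mc{I}^*) \cap \max_{\leq_{\mc{S},r}}(G(\N)\setminus\mc{I}^*)$. First I would recall that $A\leq_{\mc{S},t} B$ means precisely that both $A\leq_{\mc{S},l} B$ and $A\leq_{\mc{S},r} B$ hold, by the very definition of the two-sided $\mc{S}$-order given in the introduction. Hence if $B\in G(\N)\setminus\mc{I}^*$ satisfies $A\leq_{\mc{S},t} B$, then in particular $A\leq_{\mc{S},l} B$ with $B$ in the same set $G(\N)\setminus\mc{I}^*$; maximality of $A$ with respect to $\leq_{\mc{S},l}$ forces $B = A$ (here I would note that each $\leq_{\mc{S},\bullet}$ is a genuine partial order, in particular antisymmetric, so "$A$ is maximal and $A\leq B$" really does give $A=B$). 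This shows $A$ is maximal with respect to $\leq_{\mc{S},t}$, i.e. $A\in \max_{\leq_{\mc{S},t}}(G(\N)\setminus\mc{I}^*)$, which is the desired inclusion.

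The one point that deserves a sentence of care — and is really the only substantive step — is why $\leq_{\mc{S},t}$ is antisymmetric (and reflexive) on $G(\N)$, so that "maximal" behaves as expected; this follows because $\leq_{\mc{S},t}$ refines the entrywise order $\leq_e$: if $A\leq_{\mc{S},t} B$ then $A^{-1}B\in\mc{S}\subseteq G(\N)$, and writing everything in coordinates shows $a_{ij}\le b_{ij}$ for all $i,j$, exactly as in the proof of Theorem \ref{intro:T4}. Antisymmetry of $\leq_e$ then transfers. Actually, for the lemma one only needs antisymmetry of $\leq_{\mc{S},l}$, which is the same argument applied to one side.

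I do not anticipate a genuine obstacle here: the statement is essentially the tautology that being maximal for a coarser relation (using only one of the two defining conditions of $\leq_{\mc{S},t}$, namely $\leq_{\mc{S},l}$) already implies being maximal for $\leq_{\mc{S},t}$, since any $\leq_{\mc{S},t}$-comparison above $A$ is in particular a $\leq_{\mc{S},l}$-comparison above $A$. The reverse inclusion, incidentally, need not hold — an element maximal for $\leq_{\mc{S},t}$ could still be dominated strictly in the $\leq_{\mc{S},l}$ order by some element that is not above it in the $\leq_{\mc{S},r}$ order — which is presumably why the lemma is stated as a one-sided containment and will be used that way in the proof of Proposition \ref{P:leftrightPFS}(3).
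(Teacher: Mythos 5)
Your proof is correct and follows essentially the same route as the paper's: take $A$ in the intersection, note that $A\leq_{\mc{S},t} B$ forces $A\leq_{\mc{S},l} B$ (the paper also invokes $A\leq_{\mc{S},r} B$, though as you observe one side suffices), and conclude $B=A$ by maximality. Your added remark on antisymmetry via the entrywise order is a reasonable piece of extra care, but the argument is the same.
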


\begin{proof}
Let $A\in \max_{\leq_{\mc{S},l}} (G(\N) \setminus \mc{I}^*)\cap \max_{\leq_{\mc{S},r}} (G(\N) \setminus \mc{I}^*)$. 
Let $B$ be an element of $G(\mb{N})\backslash \mc{I}^*$ such that $A\leq_{\mc{S},t} B$. 
Then, since $\{BA^{-1},A^{-1}B\}\subseteq \mc{S}$, we have the relations $A\leq_{\mc{S},l} B$ and $A\leq_{\mc{S},r} B$, implying that $B=A$.
\end{proof}

\begin{proof}[Proof of Proposition~\ref{P:leftrightPFS}]
		
(1) Let $A\in \mathtt{PF}_l(\mc{I})$.
Then $AD\in \mc{I}$ for all $D\in \mc{S}^*$.
Let $\mathbf{B}$ denote the set of elements $B\in G(\N) \setminus \mc{I}$ such that $A\leq_{\mc{S},l} B$.
This is nonempty since $A\in \mathbf{B}$. 
We assume towards a contradiction that there exists $B\in \mathbf{B} \setminus \{A\}$.
The relation $A\lneq_{\mc{S},l} B$ implies that there exists $C\in \mc{S}^*$ such that $A^{-1}B = C$.
Since $B= AC$, and since $A\in \mtt{PF}_l(\mc{I})$, we see that $B\in \mc{I}$. 
But this contradicts with our initial assumption that $B\notin \mc{I}$. 
Therefore, $\mathbf{B} = \{A\}$.  
This observation shows that $\mathtt{PF}_l(\mc{I})$ is contained in the set of maximal elements of $G(\N)\setminus \mc{I}$ with respect to the partial order $\leq_{\mc{S},l}$.

Conversely, let $A\in\max_{\leq_{\mc{S},l}}(G(\N)\setminus \mc{I})$. 
We will show that $A\in \mathtt{PF}_l(\mc{I})$. 
Equivalently, we will show that $A\mc{S}^* \subseteq \mc{I}$.
Let us assume towards a contradiction that there exists $B \in \mc{S}^*$ such that $AB \notin \mc{I}$.
We will compare $AB$ with $A$.
Since $A^{-1}AB = B \in \mc{S}$, we see that $A\leq_{\mc{S},l} AB$. 
But $A$ is a maximal element. 
It follows that $A=AB$ or equivalently that $\mathbf{1}_n = B$ since $G(\N)$ is a cancellative monoid. 
This contradicts with the fact that $B$ is from $\mc{S}^*$. 
Hence, $A$ is an element of $\mathtt{PF}_l(\mc{I})$ as claimed. 
This finishes the proof of our first assertion.
\\
		
(2) The proof of this case is almost identical to the proof of (1) so we skip it. 
\\

(3) Let $A\in \mathtt{PF}_t(\mc{I})$.
Then $\{AD,DA\}\subset \mc{I}$ for every $D\in \mc{S}^*$.
Let $\mathbf{B}_l$ and $\mbf{B}_r$ denote the sets of elements $B_l,B_r\in G(\N) \setminus \mc{I}$ such that $A\leq_{\mc{S},l} B_l$ and $A\leq_{\mc{S},r} B_r$, respectively.
These are nonempty sets since $A\in \mathbf{B}_l$ and $A\in\mbf{B}_r$. 
We will show that $\mbf{B}_l=\mathbf{B}_r=\{A\}$. 
Now, assume that $B\in\mbf{B}_l\backslash \{A\}$. 
Then $A^{-1}B=C\in\mc{S}$, implying that $B=AC\in \mc{I}$.
This contradicts with the fact $B\notin\mc{I}$. 
Hence, we have $\mbf{B}_l=\{A\}$. 
Similarly, we see that $\mbf{B}_r=\{A\}$. 
Thus we showed that $A\in \max_{\leq_{\mc{S},l}} (G(\N) \setminus \mc{I})\cap \max_{\leq_{\mc{S},r}} (G(\N) \setminus \mc{I})$.

Conversely, let $A\in \max_{\leq_{\mc{S},l}} (G(\N) \setminus \mc{I})\cap \max_{\leq_{\mc{S},r}} (G(\N) \setminus \mc{I})$. 
We will show that $A\in \mathtt{PF}_{t}(\mc{I})$. 
Equivalently, we will show that $A\mc{S}^*\cup \mc{S}^*A \subseteq \mc{I}$.
Let us assume towards a contradiction that there exists $B \in \mc{S}^*$ such that either $AB \notin \mc{I}$ or $BA \notin \mc{I}$.
Without loss of generality, we assume that $AB\notin \mc{I}$. 
We will compare $AB$ with $A$ but in the order $\leq_{\mc{S},l}$ not in the order $\leq_{\mc{S},t}$.
Since $A^{-1}AB = B \in \mc{S}$, we see that $A\leq_{\mc{S},l} AB$. 
But $A$ is a maximal element of $\max_{\leq_{\mc{S},l}} (G(\N) \setminus \mc{I})$ by assumption.
It follows that $A=AB$ or equivalently that $\mathbf{1}_n = B$ since $G(\N)$ is a cancellative monoid. 
This contradicts with the fact that $B$ is from $\mc{S}^*$. 
Hence, $A$ is an element of $\mathtt{PF}_{t}(\mc{I})$ as claimed. 
This finishes the proof of our proposition.
\end{proof}

\begin{Proposition}\label{P:PFisinF}
Let $\mc{I}$ be a relative right ideal of $\mc{S}$. 
Then we have the following assertions:
\begin{enumerate}
\item[(1)] $\mathtt{PF}_{t}(\mc{I}) = \mathtt{PF}_l(\mc{I}) \cap \mathtt{PF}_r(\mc{I})$,
\item[(2)] $\mathtt{F}_l(\mc{I}) \subseteq  \mathtt{PF}_l(\mc{I})$,
\item[(3)] $\mathtt{F}_r(\mc{I}) \subseteq  \mathtt{PF}_r(\mc{I})$,
\item[(4)] $\mathtt{F}_t(\mc{I}) \subseteq  \mathtt{PF}_t(\mc{I})$.
\end{enumerate}
\end{Proposition}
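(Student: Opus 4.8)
The plan is to deduce all four assertions directly from the definitions of the pseudo-Frobenius and Frobenius sets, together with the characterization in Proposition~\ref{P:leftrightPFS}.

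First I would dispatch assertion (1). By definition, $A\in \mathtt{PF}_l(\mc{I})$ means $A\notin \mc{I}^*$ and $A\mc{S}^*\subseteq \mc{I}$, while $A\in \mathtt{PF}_r(\mc{I})$ means $A\notin \mc{I}^*$ and $\mc{S}^*A\subseteq \mc{I}$. Intersecting these two conditions gives exactly the condition that $A\notin \mc{I}^*$ and $\{AC,CA\}\subseteq \mc{I}$ for all $C\in \mc{S}^*$, which is the definition of $\mathtt{PF}_t(\mc{I})$. So (1) is a one-line unwinding of definitions; alternatively one can invoke Proposition~\ref{P:leftrightPFS}(3) directly, since that proposition already expresses $\mathtt{PF}_t(\mc{I})$ as an intersection of the same two maximal-element sets that describe $\mathtt{PF}_l(\mc{I})$ and $\mathtt{PF}_r(\mc{I})$.

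Next I would prove (2) (and (3) symmetrically). Let $A\in \mathtt{F}_l(\mc{I})$. Then $A\notin \mc{I}^*$ and $A(G(\N)^*)\subseteq \mc{I}$. Since $\mc{S}^*\subseteq \mc{S}\subseteq G(\N)$ and in fact $\mc{S}^*\subseteq G(\N)^*$ (because $\mc{S}^* = \mc{S}\setminus\{\mathbf{1}_n\}$ consists of non-identity elements of $G(\N)$), we get $A\mc{S}^*\subseteq A(G(\N)^*)\subseteq \mc{I}$. Combined with $A\notin \mc{I}^*$, this shows $A\in \mathtt{PF}_l(\mc{I})$, establishing (2). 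The proof of (3) is identical with left multiplication replaced by right multiplication.

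Finally, assertion (4) follows by combining the previous parts with the analogous identity for Frobenius sets. Recall from Section~\ref{S:Frobenius} that $\mathtt{F}_t(\mc{I}) = \mathtt{F}_l(\mc{I})\cap \mathtt{F}_r(\mc{I})$. Using (2) and (3), we obtain
\begin{align*}
\mathtt{F}_t(\mc{I}) = \mathtt{F}_l(\mc{I})\cap \mathtt{F}_r(\mc{I}) \subseteq \mathtt{PF}_l(\mc{I})\cap \mathtt{PF}_r(\mc{I}) = \mathtt{PF}_t(\mc{I}),
\end{align*}
where the last equality is part (1). I do not anticipate any real obstacle here: the proposition is essentially bookkeeping with the definitions, the only mild subtlety being the observation that $\mc{S}^*\subseteq G(\N)^*$, which makes the "for all $C\in G(\N)^*$" condition defining the Frobenius sets stronger than the "for all $C\in \mc{S}^*$" condition defining the pseudo-Frobenius sets. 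This inclusion $\mc{S}^*\subseteq G(\N)^*$ is immediate since $\mc{S}\subseteq G(\N)$ and both $*$-operations simply remove the common identity element $\mathbf{1}_n$.
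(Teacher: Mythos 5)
Your proposal is correct and follows essentially the same route as the paper, which likewise derives (1) from the definitions (via Proposition~\ref{P:leftrightPFS}), gets (2) and (3) directly from the definitions using $\mc{S}^*\subseteq G(\N)^*$, and deduces (4) from (1)--(3) together with $\mtt{F}_t(\mc{I})=\mtt{F}_l(\mc{I})\cap\mtt{F}_r(\mc{I})$. Your write-up simply makes explicit the bookkeeping that the paper leaves implicit.
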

\begin{proof}
The first item follows immediately from Proposition~\ref{P:leftrightPFS}.  
The assertions (2) and (3) follow from the definitions. 
Finally, (4) follows from (1), (2), and (3) and the definition of the two-sided Frobenius set.
\end{proof}
\medskip

We now investigate the relationship between the pseudo-Frobenius sets and the Ap\'ery sets. 
Recall that the left and the right Ap\'ery sets of an element $A\in \mc{S}^*$ are defined by 
\begin{align*}
\mtt{Ap}_l(\mc{S}, A) := \{ B\in \mc{S} \mid A^{-1}B \notin \mc{S} \} \quad \text{and}\quad \mtt{Ap}_r(\mc{S}, A) := \{ B\in \mc{S} \mid BA^{-1} \notin \mc{S} \}.
\end{align*}

\begin{Theorem}\label{T:PF_Apery}
Let $A\in \mc{S}^*$. Then we have 
\begin{align*}
\mtt{PF}_{t}(\mc{S})=  A^{-1} (\max_{\leq_{\mc{S},l}} \mtt{Ap}_l(\mc{S},A)) \cap  (\max_{\leq_{\mc{S},r}} \mtt{Ap}_r(\mc{S},A) ) A^{-1}.
\end{align*}
\end{Theorem}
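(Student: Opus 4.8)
The plan is to reduce everything to an elementary fact about the $\mc{S}$-orders: for any fixed $X\in G_\Z$, left multiplication $Y\mapsto XY$ is an automorphism of the poset $(G_\Z,\leq_{\mc{S},l})$, since $(XY_1)^{-1}(XY_2)=Y_1^{-1}Y_2$, and symmetrically right multiplication is an automorphism of $(G_\Z,\leq_{\mc{S},r})$. Taking $X=A^{-1}$ this gives $A^{-1}\bigl(\max_{\leq_{\mc{S},l}}\mtt{Ap}_l(\mc{S},A)\bigr)=\max_{\leq_{\mc{S},l}}\bigl(A^{-1}\mtt{Ap}_l(\mc{S},A)\bigr)$, and a direct rewriting of the definitions yields $A^{-1}\mtt{Ap}_l(\mc{S},A)=\{Z\in G_\Z : Z\notin\mc{S}\text{ and }AZ\in\mc{S}\}$; similarly $\mtt{Ap}_r(\mc{S},A)A^{-1}=\{Z\in G_\Z : Z\notin\mc{S}\text{ and }ZA\in\mc{S}\}$, with maximal elements now taken for $\leq_{\mc{S},r}$. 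Thus the theorem becomes the assertion that $\mtt{PF}_t(\mc{S})$ equals the intersection of these two sets of maximal elements.

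For the inclusion $\supseteq$, take $C\in\mtt{PF}_t(\mc{S})$ with $C\neq\mathbf{1}_n$, so $C\in G(\N)\setminus\mc{S}$, $C\mc{S}^*\subseteq\mc{S}$ and $\mc{S}^*C\subseteq\mc{S}$. Since $A\in\mc{S}^*$, the condition $\mc{S}^*C\subseteq\mc{S}$ gives $AC\in\mc{S}$, while $A^{-1}(AC)=C\notin\mc{S}$, so $AC\in\mtt{Ap}_l(\mc{S},A)$. If $W\in\mtt{Ap}_l(\mc{S},A)$ satisfies $AC\leq_{\mc{S},l}W$, write $W=A(CD)$ with $D:=(AC)^{-1}W\in\mc{S}$; if $D\neq\mathbf{1}_n$ then $CD\in\mc{S}$ by $C\mc{S}^*\subseteq\mc{S}$, forcing $A^{-1}W=CD\in\mc{S}$, a contradiction, so $D=\mathbf{1}_n$ and $W=AC$. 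Hence $AC$ is $\leq_{\mc{S},l}$-maximal in $\mtt{Ap}_l(\mc{S},A)$, and the mirror argument — with the roles of the two absorption conditions interchanged and $\leq_{\mc{S},r}$ in place of $\leq_{\mc{S},l}$ — shows $CA$ is $\leq_{\mc{S},r}$-maximal in $\mtt{Ap}_r(\mc{S},A)$. Therefore $C=A^{-1}(AC)=(CA)A^{-1}$ lies in both sets. The point worth stressing is that in each factor one of the two absorption conditions is used for membership in the relevant Apéry set and the other for maximality, so genuinely two-sided elements are needed.

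For $\subseteq$, let $C$ lie in the intersection and set $W_l:=AC$, $W_r:=CA$. From $C=A^{-1}W_l\notin\mc{S}$ we get $C\notin\mc{S}^*$. Given $s\in\mc{S}^*$ we have $W_ls\in\mc{S}$, and if $Cs=A^{-1}(W_ls)\notin\mc{S}$ then $W_ls\in\mtt{Ap}_l(\mc{S},A)$ would sit strictly above $W_l$ (because $W_l^{-1}(W_ls)=s\in\mc{S}^*$), contradicting maximality of $W_l$; so $Cs\in\mc{S}$, i.e. $C\mc{S}^*\subseteq\mc{S}$, and symmetrically $\mc{S}^*C\subseteq\mc{S}$. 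Finally $C\in G(\N)$: if $C$ had a negative entry in position $(i,j)$, a sufficiently high power of an elementary generator $E_{r,s}$ with $s\neq j$ — which lies in $\mc{S}^*$ because $\mc{S}$ is complement-finite — would preserve the sign of that entry upon right multiplication, contradicting $C\mc{S}^*\subseteq\mc{S}\subseteq G(\N)$. Hence $C\in\mtt{PF}_t(\mc{S})$.

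I expect the main obstacle to be exactly this failure of one-sidedness: the one-sided identities such as $A^{-1}\max_{\leq_{\mc{S},l}}\mtt{Ap}_l(\mc{S},A)=\mtt{PF}_l(\mc{S})$ are false in general (for instance $C\mc{S}^*\subseteq\mc{S}$ only yields $CA\in\mc{S}$, not $AC\in\mc{S}$), so the two factors cannot be treated independently and the bookkeeping of which absorption hypothesis is invoked where must be done carefully. A secondary technicality is that $A^{-1}\mtt{Ap}_l(\mc{S},A)$ need not sit inside $G(\N)$ — the exact analogue of the classical Apéry set $\mathrm{Ap}(S,n)-n$ acquiring negative entries — so complement-finiteness of $\mc{S}$ has to be used a second time to land the final intersection back in $G(\N)$. (One should also note that $\mathbf{1}_n$ itself lies in $\mtt{PF}_t(\mc{S})$ but not in the right-hand side, since $A\notin\mtt{Ap}_l(\mc{S},A)$, so the identity is the statement for the nontrivial pseudo-Frobenius elements and $\mathbf{1}_n$ is tracked separately.)
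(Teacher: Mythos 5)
Your proof is correct and follows essentially the same route as the paper's: in both arguments the forward inclusion comes from showing that $AC$ (resp.\ $CA$) lies in $\mtt{Ap}_l(\mc{S},A)$ (resp.\ $\mtt{Ap}_r(\mc{S},A)$) via one absorption condition and is maximal there via the other, and the reverse inclusion uses maximality to force $C\mc{S}^*\cup\mc{S}^*C\subseteq\mc{S}$ by the cancellation trick $W=WD\Rightarrow D=\mathbf{1}_n$. The two points you add are genuine improvements on the paper's write-up: the paper never checks that a maximal element of $A^{-1}\mtt{Ap}_l(\mc{S},A)$ actually lands in $G(\N)$ (your cofiniteness argument fills this in, though it implicitly assumes enough elementary matrices lie in $G$), and your warning that the one-sided identity $\mtt{PF}_l(\mc{S})=A^{-1}\max_{\leq_{\mc{S},l}}\mtt{Ap}_l(\mc{S},A)$ can fail is correct --- $C\mc{S}^*\subseteq\mc{S}$ gives $CA\in\mc{S}$ but not $AC\in\mc{S}$, as Example~\ref{E:subtleexample1} shows --- which is exactly why the two factors must be intersected rather than computed separately.
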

	
\begin{proof}
Let $Z\in \mtt{PF}_{t}(\mc{S})$. 
Then, for $A\in \mc{S}^*$, we have $ZA\in \mc{S}$ and $AZ \in \mc{S}$.  
Since $A^{-1}AZ= Z \notin \mc{S}$, we see that $AZ\in \mtt{Ap}_l(\mc{S},A)$. 
Likewise, since $ZAA^{-1} = Z \notin \mc{S}$, we see that $ZA \in \mtt{Ap}_r (\mc{S},A)$. 

We proceed to show that $AZ$ is a maximal element of $\mtt{Ap}_l(\mc{S},A)$ with respect to $\leq_{\mc{S},l}$. 
This will show that $Z$ is an element of $A^{-1} \max_{\leq_{\mc{S},l}} \mtt{Ap}_l(\mc{S},A)$. 
Let $Y\in  \mtt{Ap}_l(\mc{S},A)$ be such that $AZ \lneq_{\mc{S},l} Y$.
This means that $Y=AZC$ for some $C\in \mc{S}^*$. 
Since $A^{-1}Y\notin \mc{S}$, we see that $ZC\notin \mc{S}$. 
But this contradicts with our initial assumption that $Z\in \mtt{PF}_{t}(\mc{S}) \subset \mtt{PF}_l(\mc{S})$. 
Hence, $AZ$ is a maximal element of $\mtt{Ap}_l(\mc{S},A)$ with respect to $\leq_{\mc{S},l}$. 
In particular, this argument shows that $Z\in A^{-1} \max_{\leq_{\mc{S},l}} \mtt{Ap}_l(\mc{S},A)$.

Next, we will show that $ZA$ is a maximal element of $\mtt{Ap}_r(\mc{S},A)$ with respect to $\leq_{\mc{S},r}$. 
This will show that $Z$ is an element of $\max_{\leq_{\mc{S},r}} \mtt{Ap}_r(\mc{S},A) A^{-1}$. 
Let $Y\in  \mtt{Ap}_r(\mc{S},A)$ be such that $ZA \lneq_{\mc{S},r} Y$.
This means that $Y=CZA$ for some $C\in \mc{S}^*$. 
Since $YA^{-1} \notin \mc{S}$, we see that $CZ\notin \mc{S}$. 
But this contradicts with our initial assumption that $Z\in \mtt{PF}_{t}(\mc{S}) \subseteq \mtt{PF}_r(\mc{S})$. 
Hence, $ZA$ is a maximal element of $\mtt{Ap}_r(\mc{S},A)$ with respect to $\leq_{\mc{S},r}$. 
In particular, this argument shows that $Z\in (\max_{\leq_{\mc{S},r}} \mtt{Ap}_r(\mc{S},A)) A^{-1}$.
In conclusion, we showed that 
\begin{align*}
\mtt{PF}_{t}(\mc{S})\subseteq  A^{-1} (\max_{\leq_{\mc{S},l}} \mtt{Ap}_l(\mc{S},A)) \cap  (\max_{\leq_{\mc{S},r}} \mtt{Ap}_r(\mc{S},A) ) A^{-1}.
\end{align*}

Conversely, let $Z \in A^{-1} \max_{\leq_{\mc{S},l}} \mtt{Ap}_l(\mc{S},A)$.
This means that $AZ \in  \max_{\leq_{\mc{S},l}} \mtt{Ap}_l(\mc{S},A)$.
Hence, $AZ \in\mc{S}$ and $Z=A^{-1}(AZ) \notin\mc{S}$. 
Assume that $Z$ is not an element of $\mtt{PF}_l(\mc{S})$. 
Then there is an element $C\in \mc{S}^*$ such that $ZC \notin \mc{S}$. 
Since $AZ \in \mc{S}$ and $C\in \mc{S}^*$, we see that $AZC$ is an element of $\mc{S}$. 
At the same time, we know that $A^{-1}AZC=ZC$ is not an element of $\mc{S}$.
It follows that $AZC \in \mtt{Ap}_l(\mc{S},A)$. 
But $AZ \leq_{\mc{S},l} AZC$ since $(AZ)^{-1} AZC = C \in \mc{S}$.
Since $AZ$ is a maximal element of $\mtt{Ap}_l(\mc{S},A)$, we see that $AZ = AZC$, hence $\mathbf{1}_n = C$.
This contradicts with the fact that $C\in \mc{S}^*$. 
It follows that $Z\in \mtt{PF}_l(\mc{S})$. 
A similar argument shows that $Z\in \mtt{PF}_r(\mc{S})$. 
Hence, we proved that 
\begin{align*}
\mtt{PF}_{t}(\mc{S})\supseteq  A^{-1} (\max_{\leq_{\mc{S},l}} \mtt{Ap}_l(\mc{S},A)) \cap  (\max_{\leq_{\mc{S},r}} \mtt{Ap}_r(\mc{S},A) ) A^{-1}.
\end{align*}
This finishes the proof of our theorem.
\end{proof}

The proof of our previous theorem gives the proof of the following result, also.
\begin{Theorem}
Let $A\in \mc{S}^*$. Then we have 
\begin{align*}
\mtt{PF}_{l}(\mc{S})=  A^{-1} (\max_{\leq_{\mc{S},l}} \mtt{Ap}_l(\mc{S},A)) \quad \text{ and }\quad 
\mtt{PF}_{r}(\mc{S})= (\max_{\leq_{\mc{S},r}} \mtt{Ap}_r(\mc{S},A) ) A^{-1}.
\end{align*}
\end{Theorem}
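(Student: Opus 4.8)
The plan is to run the argument from the proof of Theorem~\ref{T:PF_Apery} twice, keeping only its ``left-hand factor'' in one pass and only its ``right-hand factor'' in the other; this is the precise sense in which that proof already contains the present one. I would establish the first identity in detail and then note that the second follows after the symmetric substitutions (replace $A^{-1}B$ by $BA^{-1}$, $\leq_{\mc{S},l}$ by $\leq_{\mc{S},r}$, $\mtt{Ap}_l$ by $\mtt{Ap}_r$, $\mtt{PF}_l$ by $\mtt{PF}_r$, and interchange left and right everywhere).

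For the inclusion $A^{-1}(\max_{\leq_{\mc{S},l}}\mtt{Ap}_l(\mc{S},A)) \subseteq \mtt{PF}_l(\mc{S})$, I would take $Z$ with $AZ \in \max_{\leq_{\mc{S},l}}\mtt{Ap}_l(\mc{S},A)$, so that $AZ \in \mc{S}$ and $Z = A^{-1}(AZ) \notin \mc{S}$, and then show $Z\mc{S}^* \subseteq \mc{S}$: if $ZC \notin \mc{S}$ for some $C \in \mc{S}^*$, then $AZC \in \mc{S}$ while $A^{-1}(AZC) = ZC \notin \mc{S}$, so $AZC \in \mtt{Ap}_l(\mc{S},A)$, and $AZ \leq_{\mc{S},l} AZC$ since $(AZ)^{-1}(AZC) = C \in \mc{S}$, contradicting the maximality of $AZ$ unless $C = \mathbf{1}_n$. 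This is word-for-word the ``Conversely'' part of the proof of Theorem~\ref{T:PF_Apery} with the right-hand factor deleted, and it uses nothing beyond $\leq_{\mc{S},l}$.

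For the reverse inclusion, I would take $Z \in \mtt{PF}_l(\mc{S})$, so $Z \notin \mc{S}$ and $Z\mc{S}^* \subseteq \mc{S}$, and first argue that $AZ \in \mc{S}$. This is the one spot where the two-sided hypothesis entered Theorem~\ref{T:PF_Apery} (there it gave $AZ \in \mc{S}$ instantly), so in the one-sided statement it needs its own justification; my route would be to observe that $(AZ)\mc{S}^* = A(Z\mc{S}^*) \subseteq A\mc{S} \subseteq \mc{S}$ by the monoid law, so $AZ$ is not a gap lying outside $\mtt{PF}_l(\mc{S})$, and then to exclude $AZ \in \mtt{PF}_l(\mc{S}) \setminus \mc{S}$ using the description $\mtt{PF}_l(\mc{S}) = \max_{\leq_{\mc{S},l}}(G(\N)\setminus\mc{S})$ from Proposition~\ref{P:leftrightPFS} together with finiteness of the gap set (applied to the pairwise distinct chain $Z, AZ, A^2Z, \dots$). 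Granting $AZ \in \mc{S}$, one gets $AZ \in \mtt{Ap}_l(\mc{S},A)$ from $A^{-1}(AZ) = Z \notin \mc{S}$, and $AZ$ is $\leq_{\mc{S},l}$-maximal in $\mtt{Ap}_l(\mc{S},A)$ by the same computation as in Theorem~\ref{T:PF_Apery}: if $AZ \lneq_{\mc{S},l} Y \in \mtt{Ap}_l(\mc{S},A)$ then $Y = AZC$ with $C \in \mc{S}^*$, and $A^{-1}Y = ZC \notin \mc{S}$ contradicts $Z \in \mtt{PF}_l(\mc{S})$. Hence $Z = A^{-1}(AZ) \in A^{-1}(\max_{\leq_{\mc{S},l}}\mtt{Ap}_l(\mc{S},A))$.

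The main obstacle is exactly that single step: deriving $AZ \in \mc{S}$ (and dually $ZA \in \mc{S}$ in the right-handed identity) from the one-sided pseudo-Frobenius condition $Z\mc{S}^* \subseteq \mc{S}$, which a priori only yields $ZA \in \mc{S}$, rather than from the two-sided condition. Everything else is a faithful transcription of the proof of Theorem~\ref{T:PF_Apery}, so the write-up would foreground this membership check and present the rest briefly.
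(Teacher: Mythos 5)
Your reading of the proof of Theorem~\ref{T:PF_Apery} is accurate: the inclusion $A^{-1}(\max_{\leq_{\mc{S},l}}\mtt{Ap}_l(\mc{S},A))\subseteq \mtt{PF}_l(\mc{S})$ really is the ``Conversely'' half of that proof with the right-hand factor deleted, and your transcription of it is correct. You have also put your finger on exactly the right spot: in the two-sided proof the membership $AZ\in\mc{S}$ comes from $\mc{S}^*Z\subseteq\mc{S}$, i.e.\ from the \emph{right} pseudo-Frobenius condition, whereas the one-sided hypothesis $Z\mc{S}^*\subseteq\mc{S}$ only yields $ZA\in\mc{S}$. The gap is that your proposed repair of this step does not close, and in fact cannot close, because the needed membership is false in general. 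Your step (a) is fine: $(AZ)\mc{S}^*=A(Z\mc{S}^*)\subseteq\mc{S}$, so $AZ\in\mc{S}\cup\mtt{PF}_l(\mc{S})$. But step (b) cannot exclude $AZ\in\mtt{PF}_l(\mc{S})\setminus\mc{S}$: the chain $Z,AZ,A^2Z,\dots$ is increasing for $\leq_{\mc{S},r}$, not for $\leq_{\mc{S},l}$, so the maximality of $\mtt{PF}_l(\mc{S})$ with respect to $\leq_{\mc{S},l}$ (Proposition~\ref{P:leftrightPFS}) says nothing about consecutive terms of that chain; finiteness of the gap set forces $A^kZ\in\mc{S}$ for some $k$, but nothing forces $k=1$.

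Concretely, Example~\ref{E:subtleexample1} of the paper already defeats the step. Writing $(a,b,c)$ for the element of $\mathbf{U}(3,\N)$ with $(1,2)$-entry $a$, $(1,3)$-entry $b$ and $(2,3)$-entry $c$, so that $(a,b,c)(a',b',c')=(a+a',\,b+b'+ac',\,c+c')$, that example has $\mtt{PF}_l(\mc{S})=\{(1,0,0),(1,1,1)\}$. Take $Z=(1,0,0)\in\mtt{PF}_l(\mc{S})$ and $A=(0,1,1)\in\mc{S}^*$. Then $AZ=(1,1,1)$, which lies in $\mtt{PF}_l(\mc{S})\setminus\mc{S}$; hence $AZ\notin\mtt{Ap}_l(\mc{S},A)$ and $Z\notin A^{-1}(\max_{\leq_{\mc{S},l}}\mtt{Ap}_l(\mc{S},A))$. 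The same phenomenon occurs on the right: $(0,0,1)\in\mtt{PF}_r(\mc{S})$ while $(0,0,1)\cdot(1,1,0)=(1,1,1)\notin\mc{S}$. So the inclusion you are trying to establish, $\mtt{PF}_l(\mc{S})\subseteq A^{-1}(\max_{\leq_{\mc{S},l}}\mtt{Ap}_l(\mc{S},A))$, genuinely fails for some $A\in\mc{S}^*$ unless an extra hypothesis is imposed (commutativity of $\mc{S}$, or $\mtt{PF}_l(\mc{S})=\mtt{PF}_t(\mc{S})$, or a restriction on $A$); only the containments $A^{-1}(\max_{\leq_{\mc{S},l}}\mtt{Ap}_l(\mc{S},A))\subseteq\mtt{PF}_l(\mc{S})$ and $(\max_{\leq_{\mc{S},r}}\mtt{Ap}_r(\mc{S},A))A^{-1}\subseteq\mtt{PF}_r(\mc{S})$ survive from the proof of Theorem~\ref{T:PF_Apery}. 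Your instinct to foreground this membership check was exactly right---it is the point at which the one-sided statement, and therefore any proof of it modeled on Theorem~\ref{T:PF_Apery}, comes apart.
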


\begin{Definition}
Let $\mc{S}$ be a unipotent numerical monoid. The \red{\em set of special gaps} of $\mc{S}$ is defined by
\begin{align*}
\mtt{SG}(\mc{S}) := \{ A\in \mtt{PF}_{t}(\mc{S}) \mid A^2 \in \mc{S} \}.
\end{align*}
\end{Definition}

\begin{Lemma}\label{L:specialgaps+1}
Let $A\in G(\N) \setminus \mc{S}$. Then $A\in \mtt{SG}(\mc{S})$ if and only if $\mc{S} \cup \{A\}$ is a unipotent numerical monoid.
\end{Lemma}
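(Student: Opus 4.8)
The plan is to prove the biconditional directly in both directions, using the characterization of $\mtt{PF}_t(\mc{S})$ via maximal elements from Proposition~\ref{P:leftrightPFS} together with the multiplicative closure arguments already developed for Frobenius sets (cf.\ the proof of Proposition~\ref{P:SunionF}). Throughout, recall that $\mtt{SG}(\mc{S}) = \{A \in \mtt{PF}_t(\mc{S}) \mid A^2 \in \mc{S}\}$ and that $\mtt{PF}_t(\mc{S}) = \max_{\leq_{\mc{S},l}}(G(\N)\setminus\mc{S}) \cap \max_{\leq_{\mc{S},r}}(G(\N)\setminus\mc{S})$, since here $\mc{I}=\mc{S}$ and $\mc{I}^* = \mc{S}^*$, so $G(\N)\setminus\mc{I}^* = (G(\N)\setminus\mc{S})\cup\{\mathbf{1}_n\}$ and $\mathbf{1}_n$ is never maximal.

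First I would prove the forward direction: assume $A\in\mtt{SG}(\mc{S})$ and show $\mc{S}\cup\{A\}$ is a unipotent numerical monoid. Since $A\notin\mc{S}$ and $\mc{S}$ is already complement-finite, the set $\mc{S}\cup\{A\}$ is still complement-finite and contains $\mathbf{1}_n$; the only thing to check is closure under multiplication. Given $X,Y\in\mc{S}\cup\{A\}$, there are four cases. If both lie in $\mc{S}$, then $XY\in\mc{S}$. If $X=A$ and $Y\in\mc{S}$: when $Y=\mathbf{1}_n$ we get $XY=A$; when $Y\in\mc{S}^*$ we use $A\in\mtt{PF}_t(\mc{S})$, hence $AY\in\mc{S}$. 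The case $X\in\mc{S},Y=A$ is symmetric, using $\mc{S}^*A\subseteq\mc{S}$. Finally if $X=Y=A$ then $XY=A^2\in\mc{S}$ by the defining condition of $\mtt{SG}(\mc{S})$. In all cases $XY\in\mc{S}\cup\{A\}$, so it is a submonoid, hence a unipotent numerical monoid.

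For the converse, suppose $\mc{T}:=\mc{S}\cup\{A\}$ is a unipotent numerical monoid, with $A\in G(\N)\setminus\mc{S}$. Closure of $\mc{T}$ gives $A\cdot A\in\mc{T}$, and since $A\in G(\N)^*$ we cannot have $A^2 = A$ (cancellativity would force $A=\mathbf{1}_n$), so $A^2\in\mc{S}$. It remains to show $A\in\mtt{PF}_t(\mc{S})$, i.e.\ $\{AC,CA\}\subseteq\mc{S}$ for every $C\in\mc{S}^*$. Fix such a $C$. Then $AC\in\mc{T}=\mc{S}\cup\{A\}$ by closure; if $AC=A$ then cancellativity gives $C=\mathbf{1}_n$, contradicting $C\in\mc{S}^*$, so $AC\in\mc{S}$. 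The argument for $CA$ is identical. Hence $A\in\mtt{PF}_t(\mc{S})$ and $A^2\in\mc{S}$, so $A\in\mtt{SG}(\mc{S})$.

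The argument is essentially bookkeeping once the right framework is in place; the only mild subtlety --- and the one place I would be careful --- is the repeated appeal to cancellativity to rule out $AC=A$ (equivalently $A^2=A$), which is what forces the product to land in $\mc{S}$ rather than merely in $\mc{S}\cup\{A\}$. This is legitimate because all unipotent numerical monoids are cancellative, as noted in Section~\ref{S:Preliminaries}. No deeper obstacle arises.
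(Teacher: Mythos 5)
Your proof is correct and follows essentially the same route as the paper's: the forward direction unpacks $A\in\mtt{PF}_t(\mc{S})$ and $A^2\in\mc{S}$ to verify closure case by case, and the converse uses cancellativity to force $A\mc{S}^*\cup\mc{S}^*A\subseteq\mc{S}$. If anything, your write-up is slightly more complete than the paper's, which leaves the verification of $A^2\in\mc{S}$ implicit in the converse direction.
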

	
\begin{proof}
If $A\in \mtt{SG}(\mc{S})$, then $A\in \mtt{PF}_{t}(\mc{S})$. 
Hence, we already know that $\{AX,XA\}\subset \mc{S}$ for every $X\in \mc{S}^*$.
At the same time, $A\in \mtt{SG}(\mc{S})$ implies that $A^2\in \mc{S}$.
These observations show that $\mc{S} \cup \{A\}$ is closed under multiplication. 
Since $A$ is from $G(\N)$, we proved that $\mc{S} \cup \{A\}$ is a unipotent numerical monoid. 
		
Conversely, if $\mc{S} \cup \{A\}$ is a unipotent numerical monoid, it is a cancellative semigroup.
In particular, we have $A\mc{S}^*\subseteq \mc{S}$ and $\mc{S}^*A\subseteq \mc{S}$. 
It follows that $A$ is an element of $\mtt{PF}_{t}(\mc{S})$.
\end{proof}

We are now ready to prove our Theorem~\ref{intro:T6} from the introduction. 
Since it has an additional part, we state it as a theorem to be able to give a reference to it later.

\begin{Theorem}(Theorem~\ref{intro:T6})\label{C:irreducibleimpliesF=1}
If $\mc{S}$ is an irreducible unipotent numerical monoid, then $\mtt{F}_t(\mc{S})=\mtt{SG}(\mc{S})$ and $|\mtt{F}_t(\mc{S})|=1$. 
\end{Theorem}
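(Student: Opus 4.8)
The plan is to derive the statement from two ingredients already available: Proposition~\ref{P:SunionF}, which says that $\mc{S}\cup\{A\}$ is a unipotent numerical monoid for every $A\in\mtt{F}_t(\mc{S})$; and Lemma~\ref{L:specialgaps+1}, which characterizes $\mtt{SG}(\mc{S})$ as the set of gaps $A$ for which $\mc{S}\cup\{A\}$ is a unipotent numerical monoid. The nonemptiness of $\mtt{F}_t(\mc{S})$, noted right after the definition of the Frobenius sets, supplies the lower bound on cardinalities.

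First I would prove the inclusion $\mtt{F}_t(\mc{S})\subseteq\mtt{SG}(\mc{S})$, which needs no irreducibility hypothesis. Let $A\in\mtt{F}_t(\mc{S})$, so that $A\in G(\N)\setminus\mc{S}$. By the last assertion of Proposition~\ref{P:SunionF} applied with $\mtt{C}=\{A\}$, the set $\mc{S}\cup\{A\}$ is a unipotent numerical monoid, hence Lemma~\ref{L:specialgaps+1} gives $A\in\mtt{SG}(\mc{S})$. Since $\mtt{F}_t(\mc{S})\neq\emptyset$, this also shows $\mtt{SG}(\mc{S})\neq\emptyset$.

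Next I would use irreducibility to show $|\mtt{SG}(\mc{S})|\leq 1$. Assume to the contrary that $A$ and $B$ are distinct elements of $\mtt{SG}(\mc{S})$. Since $\mtt{SG}(\mc{S})$ consists of gaps, $A$ and $B$ both lie in $G(\N)\setminus\mc{S}$, and by Lemma~\ref{L:specialgaps+1} the sets $\mc{T}_1:=\mc{S}\cup\{A\}$ and $\mc{T}_2:=\mc{S}\cup\{B\}$ are unipotent numerical monoids, each properly containing $\mc{S}$. Because $A\neq B$, we have $\mc{T}_1\cap\mc{T}_2=\mc{S}\cup(\{A\}\cap\{B\})=\mc{S}$, contradicting the irreducibility of $\mc{S}$. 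Hence $|\mtt{SG}(\mc{S})|\leq 1$.

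Finally I would assemble the conclusion: the chain $1\leq|\mtt{F}_t(\mc{S})|\leq|\mtt{SG}(\mc{S})|\leq 1$ forces $|\mtt{F}_t(\mc{S})|=|\mtt{SG}(\mc{S})|=1$, and since $\mtt{F}_t(\mc{S})\subseteq\mtt{SG}(\mc{S})$ with both singletons, the two sets are equal. I do not expect a real obstacle: the whole argument rests on Lemma~\ref{L:specialgaps+1} converting ``being a special gap'' into ``adjoining $A$ yields a unipotent numerical monoid'', so that two distinct special gaps would exhibit $\mc{S}$ as an intersection $\mc{T}_1\cap\mc{T}_2$ of strictly larger unipotent numerical monoids. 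The only thing to watch is the routine bookkeeping that elements of $\mtt{F}_t(\mc{S})$ and $\mtt{SG}(\mc{S})$ are genuine gaps, which is immediate from the condition $A\notin\mc{S}$ used throughout.
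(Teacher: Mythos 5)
Your proof is correct and rests on exactly the same ingredients as the paper's: Lemma~\ref{L:specialgaps+1}, Proposition~\ref{P:SunionF}, the nonemptiness of $\mtt{F}_t(\mc{S})$, and the observation that two distinct adjoinable gaps would exhibit $\mc{S}$ as an intersection of two strictly larger unipotent numerical monoids. The only (cosmetic) difference is that you obtain the reverse inclusion $\mtt{SG}(\mc{S})\subseteq\mtt{F}_t(\mc{S})$ indirectly from the cardinality chain $1\leq|\mtt{F}_t(\mc{S})|\leq|\mtt{SG}(\mc{S})|\leq 1$, whereas the paper argues it directly by intersecting $\mc{S}\cup\{D\}$ with $\mc{S}\cup\mtt{F}_t(\mc{S})$ for a hypothetical $D\in\mtt{SG}(\mc{S})\setminus\mtt{F}_t(\mc{S})$.
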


\begin{proof}
We begin with the proof of $\mtt{F}_t(\mc{S})=\mtt{SG}(\mc{S})$.
It follows from definitions that $\mtt{F}_t(\mc{S})\subseteq \mtt{PF}_t(\mc{S})$. 
Furthermore, for every $A\in \mtt{F}_t(\mc{S})$ we have $A^2 \in \mc{S}$. 
In other words, every element of $\mtt{F}_t(\mc{S})$ is an element of $\mtt{SG}(\mc{S})$. 
For the reverse inclusion, for $D\in\mtt{SG}(\mc{S})\backslash\mtt{F}_t(\mc{S})$, by Lemma~\ref{L:specialgaps+1}, 
$\mc{S}\cup \{D\}$ is a unipotent numerical monoid such that $(\mc{S}\cup \{D\}) \cap \mtt{F}_t(\mc{S})=\emptyset$. 
Then we have $\mc{S}=(\mc{S}\cup \{D\})\cap(\mc{S}\cup \mtt{F}_t(\mc{S}))$, which contradicts with the irreducibility of $\mc{S}$.

We proceed to show that $|\mtt{F}_t(\mc{S})|=1$. 
If $\mtt{F}_t(\mc{S})$ contains two distinct elements then we find two distinct unipotent numerical monoids $\mc{T}_1 := \mc{S}\sqcup \{A_1\}$ and $\mc{T}_2 := \mc{S} \sqcup \{A_2\}$, where $\{A_1,A_2\}\subseteq \mtt{F}_t(\mc{S})$ such that $\mc{T}_1 \cap \mc{T}_2 = \mc{S}$.
This contradicts with the irreducibility of $\mc{S}$. 
Therefore, we conclude that $|\mtt{F}_t(\mc{S})|=1$. 
This finishes the proof of our theorem. 
\end{proof}

We conclude this section by relating the irreducibility of $\mc{S}$ to its torsion monoid and the set of special gaps. 
We will prove our Theorem~\ref{intro:T2} from the introduction, which states the following: 
\medskip

Let $\mc{S}$ be a unipotent numerical monoid.
Then $\mc{S}$ is irreducible if and only if the following two conditions hold:
\begin{enumerate}
\item[(1)] the torsion monoid $Rel_T(\mc{S},\subseteq)$ has a unique minimal nontrivial idempotent, and 
\item[(2)] $\mtt{F}_t(\mc{S})=\mtt{SG}(\mc{S})$.
\end{enumerate}

\begin{proof}[Proof of Theorem~\ref{intro:T2}]
($\Rightarrow$) 
Let $\mc{S}$ be an irreducible unipotent numerical monoid. 
By Theorem~\ref{C:irreducibleimpliesF=1}, we know that $\mtt{F}_t(\mc{S})=\mtt{SG}(\mc{S})$.
We proceed to show (1). 
Let $E(Rel_T(\mc{S},\subseteq))$ denote the set of all idempotents of $Rel_T(\mc{S},\subseteq)$.

We now assume that $\mc{S}$ is an irreducible unipotent numerical monoid. 
Towards a contradiction, let us assume that $E(Rel_T(\mc{S},\subseteq))$ has two nontrivial distinct minimal idempotents, denoted by $\mc{T}_1$ and $\mc{T}_2$. 
Since we have $\mc{T}_2\not\subseteq \mc{T}_1$ and $\mc{T}_1\not\subseteq \mc{T}_2$, there exist $A_1\in\mc{T}_1\setminus \mc{T}_2$ and $A_2\in\mc{T}_2\setminus \mc{T}_1$. For these elements, we have $A_1 \mc{S} \cup \mc{S} A_1\subseteq \mc{T}_1$ and $A_2 \mc{S} \cup \mc{S} A_2 \subseteq \mc{T}_2$. It follows that $A_1\notin A_2 \mc{S} \cup \mc{S} A_2$ and $A_2\notin A_1 \mc{S} \cup \mc{S} A_1$.

For $A\in G(\N)$, let us denote by $\langle \mc{S},A\rangle$ the monoid generated by the set $\mc{S}\cup A$.   
Observe that $\mc{S}\subseteq \mc{S} \cup \{A\} \subseteq \langle \mc{S},A\rangle$ and that $\langle \mc{S},A\rangle$ is a submonoid of $G(\N)$.
So, $\langle \mc{S},A\rangle$ is a unipotent numerical monoid and also $\langle \mc{S},A\rangle \in Rel_T(\mc{S},\subseteq )$.

We apply these observations to the monoids $\langle \mc{S},A_1\rangle$ and $\langle \mc{S},A_2\rangle$.
By Lemma~\ref{L:unmiffidempotent}, these monoids are contained in the set of idempotents, $E(Rel_T(\mc{S},\subseteq))$. 
In fact, since $A_i \notin \mc{S}$ ($i\in \{1,2\}$), they are nontrivial idempotents. 
Moreover, it follows from our previous observations on the properties of the elements $A_1$ and $A_2$ that 
\begin{align*}
\mc{S}=\langle\mc{S},A_1\rangle\cap\langle\mc{S},A_2\rangle.
\end{align*} 
Since $\langle\mc{S}, A_2\rangle \subseteq \mc{T}_2$, by the minimality of $\mc{T}_2$, we have $\langle\mc{S}, A_2\rangle = \mc{T}_2$.
By the same argument, we have $\langle\mc{S}, A_1\rangle = \mc{T}_1$.
But now this contradicts with the fact that $\mc{S}$ is irreducible. 
Hence, we conclude that there is only one minimal nontrivial idempotent. 
\medskip
		
($\Leftarrow$) 
We assume that the torsion monoid has a unique minimal nontrivial idempotent and that $\mtt{F}_t(\mc{S})=\mtt{SG}(\mc{S})$ holds. 
If $\mtt{F}_t(\mc{S})$ contains two distinct elements $C_1$ and $C_2$, then by using Remark~\ref{R:absurd} and Proposition~\ref{P:SunionF} we find two distinct minimal nontrivial idempotents $\mc{S}\cup \{C_1\}$ and $\mc{S}\cup \{C_2\}$ where $C_1,C_2\in \mtt{F}_t(\mc{S})$, which is absurd. 
Hence, we know that $|\mtt{F}_t(\mc{S})|=1$. 
It follows that $\mc{S}\cup \mtt{F}_t(\mc{S})$ is the unique minimal nontrivial idempotent in $E(Rel_T(\mc{S},\subseteq))$. 

We now assume towards a contradiction that $\mc{S}$ is not irreducible.
Then there exist unipotent numerical monoids $\mc{T}_1,\dots, \mc{T}_r$ such that 
$\mc{S}\subsetneq \mc{T}_i \subsetneq G(\N)$ for every $i\in \{1,\dots, r\}$ and $\bigcap_{i=1}^r \mc{T}_i = \mc{S}$. 
But since $\mc{S}\cup \mtt{F}_t(\mc{S})$ is the unique minimal nontrivial idempotent, we have 
$$
\mc{S}\cup \mtt{F}_t(\mc{S}) \subseteq  \mc{T}_i \ \text{ for every $i\in \{1,\dots, r\}$},
$$
implying
$$
\mc{S} \neq \mc{S}\cup \mtt{F}_t(\mc{S}) \subseteq \bigcap_{i=1}^r \mc{T}_i.
$$
This contradicts with our initial assumption that $\mc{S}$ is given by the intersection $ \bigcap_{i=1}^r \mc{T}_i$. 
Hence, we conclude that $\mc{S}$ is irreducible. 
This finishes the proof of our theorem.
\end{proof}

\section{Irreducibility and the Pseudo-Frobenius Elements}\label{S:Irreducibility}

Let $\mc{S}$ be a unipotent numerical monoid. 
We define \red{\em left (resp. right, resp. two-sided) type} of $\mc{S}$ as the cardinality of $\mtt{PF}_l(\mc{S})$ (resp. of $\mtt{PF}_r(\mc{S})$, resp. of $\mtt{PF}_t(\mc{S})$). 
Notice that if 
\begin{align*}
|\mtt{PF}_l(\mc{S})| = |\mtt{PF}_r(\mc{S})|= 1
\end{align*}
then we have $|\mtt{PF}_t(\mc{S})|=1$ also. 
But the converse of this implication is not necessarily true as we will show in our next example.

\begin{Example}\label{E:subtleexample1}
Let $\mc{S}$ be the unipotent numerical monoid in $G(\N):= \mathbf{U}(3,\N)$ such that
\[
\mc{S}=\mtt{N}(\mc{S})\cup \mbf{U}(2,\mb{N})_2,
\]
where 
$$
\mtt{N}(\mc{S})
=
\left\{ 
\begin{bmatrix}
1 & 0 & 0 \\
0 & 1 & 0 \\
0 & 0 & 1
\end{bmatrix},
\begin{bmatrix}
1 & 1 & 1 \\
0 & 1 & 0 \\
0 & 0 & 1
\end{bmatrix},
\begin{bmatrix}
1 & 0 & 1 \\
0 & 1 & 1 \\
0 & 0 & 1
\end{bmatrix},
\begin{bmatrix}
1 & 1 & 0 \\
0 & 1 & 1 \\
0 & 0 & 1
\end{bmatrix}
\right\}.
$$
Then the gap set of $\mc{S}$ is given by 
$$
\mtt{Gaps}(\mc{S})
=
\left\{ 
\begin{bmatrix}
1 & 0 & 0 \\
0 & 1 & 1 \\
0 & 0 & 1
\end{bmatrix},
\begin{bmatrix}
1 & 0 & 1 \\
0 & 1 & 0 \\
0 & 0 & 1
\end{bmatrix},
\begin{bmatrix}
1 & 1 & 0 \\
0 & 1 & 0 \\
0 & 0 & 1
\end{bmatrix},
\begin{bmatrix}
1 & 1 & 1 \\
0 & 1 & 1 \\
0 & 0 & 1
\end{bmatrix}
\right\}.
$$
The following facts are easy to check:
\begin{enumerate}
\item $\mc{S}$ is irreducible.
\item $\mtt{F}_l(\mc{S})  = \mtt{F}_r(\mc{S}) =\mtt{F}_t(\mc{S}) =
\left\{\begin{bmatrix}
1 & 1 & 1 \\
0 & 1 & 1 \\
0 & 0 & 1
\end{bmatrix}
\right\}$.

\item $\mtt{PF}_r(\mc{S})= 
\left\{\begin{bmatrix}
1 & 0 & 0 \\
0 & 1 & 1 \\
0 & 0 & 1
\end{bmatrix},
\begin{bmatrix}
1 & 1 & 1 \\
0 & 1 & 1 \\
0 & 0 & 1
\end{bmatrix}
\right\}$.
\item $\mtt{PF}_l(\mc{S})=
\left\{
\begin{bmatrix}
1 & 1 & 0 \\
0 & 1 & 0 \\
0 & 0 & 1
\end{bmatrix}, 
\begin{bmatrix}
1 & 1 & 1 \\
0 & 1 & 1 \\
0 & 0 & 1
\end{bmatrix}
\right\}$.
\end{enumerate}
In conclusion, there are irreducible unipotent numerical monoids $\mc{S}$ such that 
\begin{align*}
\mtt{PF}_l(\mc{S}) \neq \mtt{PF}_r (\mc{S})\quad\text{and}\quad |\mtt{PF}_l(\mc{S}) \cap \mtt{PF}_r (\mc{S})|=1.
\end{align*}
\end{Example}

We present another example to motivate our definition of a symmetric unipotent numerical monoid.

\begin{Example}\label{E:subtleexample2}
For the second example let $\mc{S}$ be the unipotent numerical monoid in $G(\N):= \mathbf{U}(3,\N)$ such that
\[\mc{S}=\mtt{N}(\mc{S})\cup \mbf{U}(2,\mb{N})_4\]
where $\mtt{N}(\mc{S})$ consists of
\begin{align*}
\begin{bmatrix}
1 & a & b \\
0 & 1 & c \\
0 & 0 & 1
\end{bmatrix} \in \mathbf{U}(3,\N),
\end{align*}
where $(a,b,c)$ is an element of the set
\begin{align*}
\mtt{N}(\mc{S}):=
\left\{
\begin{matrix}
(0,0,0), & (0,1,1), & (0,2,2), & (0,3,3) & (1,0,2), & (1,1,1), & (1,1,2), & (1,1,3), \\
(1,2,1), & (1,2,2), & (1,2,3), & (1,3,1), & (1,3,2), & (1,3,3), & (2,0,1), & (2,0,2), \\
(2,1,2), & (2,1,3), & (2,2,1), & (2,2,2), & (2,2,3), & (2,3,2), & (2,3,3), & (3,0,3), \\
(3,1,1), & (3,1,2), & (3,1,3), & (3,2,1), & (3,2,2), & (3,2,3), & (3,3,2), & (3,3,3)
\end{matrix}
\right\}.
\end{align*}
Let us use the triplet notation to represent the elements of $\mtt{Gaps}(\mc{S})$ as well.
Hence, we have 
\begin{align*}
\mtt{Gaps}(\mc{S}):=
\left\{
\begin{matrix}
(0,0,1), & (0,0,2), & (0,0,3), & (0,1,0), & (0,1,2), & (0,1,3), & (0,2,0), & (0,2,1), \\
(0,2,3), & (0,3,0), & (0,3,1), & (0,3,2), & (1,0,0), & (1,0,1), & (1,0,3), & (1,1,0), \\ 
(1,2,0), & (1,3,0), & (2,0,0), & (2,0,3), & (2,1,0), & (2,1,1), & (2,2,0), & (2,3,0), \\
(2,3,1), & (3,0,0), & (3,0,1), & (3,0,2), & (3,1,0), & (3,2,0), & (3,3,0), & (3,3,1)
\end{matrix}
\right\}.
\end{align*}
Then we have 
\begin{align}\label{A:NG}
|\mtt{N} (\mc{S})|= |\mtt{Gaps}(\mc{S}) |.
\end{align}
At the same time, it is easy to check that $|\mtt{F}_t(\mc{S})|$ has more than one element.
Hence, by Theorem~\ref{intro:T6}, we see that $\mc{S}$ can not be irreducible.
\end{Example}
	
It is noteworthy that for a numerical monoid $S \subseteq \mathbb{N}$, if the analogous condition to (\ref{A:NG}) holds true, that is,
\[
\mtt{n}(S) = \mtt{g}(S),
\]
then the equality $\mtt{g}(S) = \frac{\mtt{F}(S) + 1}{2}$ also holds true. 
These equivalent conditions imply that $S$ is an irreducible numerical monoid. 
In fact, this is how ``symmetric numerical monoids" are defined. 
Our example shows that such a property must be defined with greater precision.

\medskip

In the rest of this section we discuss the refinements of the set of irreducible unipotent numerical monoids. 
We begin with recalling the definitions of symmetric and pseudo-symmetric unipotent numerical monoids from the introduction: 
\begin{itemize}
\item An irreducible unipotent numerical monoid is called \red{\em symmetric} if for every $A\in G(\N)\setminus \mc{S}$ we have 
$$
\mtt{F}_t(\mc{S}) \cap A\mc{S} \neq \emptyset \quad\text{or}\quad   \mtt{F}_t(\mc{S})\cap \mc{S}A \neq \emptyset.
$$
\item An irreducible unipotent numerical monoid is called \red{\em pseudo-symmetric} if there exists $B\in G(\N)\setminus \mc{S}$ such that $B^2 \in \mtt{F}_t(\mc{S})$ and for every $A\in G(\N)\setminus (\mc{S} \sqcup \{B\})$ we have 
$\mtt{F}_t(\mc{S}) \cap A\mc{S} \neq \emptyset$ or $\mtt{F}_t(\mc{S})\cap \mc{S}A \neq \emptyset$.
\end{itemize}

\begin{Remark}
If $\mc{S}$ is symmetric, then there is no $B\in G(\N)\setminus \mc{S}$ such that $B^2 \in \mtt{F}_t(\mc{S})$.
Otherwise, it follows from the defining condition that we have either $B^2 \in \{BD , DB\}$ for some $D\in \mc{S}$.
This is absurd since $\mc{S}$ is a cancellative monoid.  
\end{Remark}
	
\begin{Proposition}\label{P:PF=F}
Let $\mc{S}$ be a symmetric unipotent numerical monoid. Then we have $|\mtt{PF}_t(\mc{S})| = |\mtt{F}_t(\mc{S})|=1$. 
\end{Proposition}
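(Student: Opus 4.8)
The plan is to show that, for a symmetric unipotent numerical monoid $\mc{S}$, the two-sided pseudo-Frobenius set $\mtt{PF}_t(\mc{S})$ cannot be strictly larger than $\mtt{F}_t(\mc{S})$; since $\mc{S}$ is irreducible, Theorem~\ref{C:irreducibleimpliesF=1} already gives $|\mtt{F}_t(\mc{S})|=1$, and the inclusion $\mtt{F}_t(\mc{S})\subseteq\mtt{PF}_t(\mc{S})$ from Proposition~\ref{P:PFisinF}(4) is always in force. So the whole content is the reverse inclusion $\mtt{PF}_t(\mc{S})\subseteq\mtt{F}_t(\mc{S})$, from which $|\mtt{PF}_t(\mc{S})|=|\mtt{F}_t(\mc{S})|=1$ follows immediately.

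First I would fix the notation $\mtt{F}_t(\mc{S})=\{C\}$ and take an arbitrary $A\in\mtt{PF}_t(\mc{S})$; I want to prove $A=C$. Since $A\in\mtt{PF}_t(\mc{S})$ we have $A\in G(\N)\setminus\mc{S}^*$ and $A\mc{S}^*\cup\mc{S}^*A\subseteq\mc{S}$; in particular $A\notin\mc{S}$ (if $A=\mathbf{1}_n$ then $A\in\mc{S}$, contradicting $A\notin\mc{S}^*$ only if one is careless — here I should note $\mathbf{1}_n\in\mc{S}$ forces $A\neq\mathbf{1}_n$, hence $A\in G(\N)\setminus\mc{S}$). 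Now apply the defining property of symmetry to this $A$: either $\mtt{F}_t(\mc{S})\cap A\mc{S}\neq\emptyset$ or $\mtt{F}_t(\mc{S})\cap\mc{S}A\neq\emptyset$. Treat the first case (the second is symmetric): there is $D\in\mc{S}$ with $AD=C$. If $D\in\mc{S}^*$, then by Remark~\ref{R:absurd} the element $C$ of $\mtt{F}_t(\mc{S})$ would be obtained from $A$ by right multiplication by an element of $G(\N)^*$ — but $A$ need not itself lie in $\mtt{F}_t(\mc{S})$, so I cannot invoke Remark~\ref{R:absurd} directly; instead I argue that $A\mc{S}^*\subseteq\mc{S}$ combined with $AD=C\notin\mc{S}$ (since $C$ is a gap) forces $D\notin\mc{S}^*$, hence $D=\mathbf{1}_n$ and $A=C$. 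The case $\mtt{F}_t(\mc{S})\cap\mc{S}A\neq\emptyset$ runs identically using $\mc{S}^*A\subseteq\mc{S}$: writing $C=DA$ with $D\in\mc{S}$, we get $D=\mathbf{1}_n$ and $A=C$. In either case $A=C$, so $\mtt{PF}_t(\mc{S})=\{C\}$ and the proposition follows.

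The only delicate point — and the step I would be most careful about — is the clean use of $A\mc{S}^*\subseteq\mc{S}$ (respectively $\mc{S}^*A\subseteq\mc{S}$) to rule out $D\in\mc{S}^*$: the decomposition produced by symmetry, $C=AD$, has $C\notin\mc{S}$ by definition of the Frobenius set, whereas $AD$ would be in $\mc{S}$ as soon as $D\in\mc{S}^*$; this contradiction pins down $D=\mathbf{1}_n$. Everything else is bookkeeping with the definitions, and Theorem~\ref{C:irreducibleimpliesF=1} supplies $|\mtt{F}_t(\mc{S})|=1$ for free since a symmetric monoid is irreducible by definition.
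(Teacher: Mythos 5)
Your argument is correct and is essentially the paper's own proof: both reduce to showing $\mtt{PF}_t(\mc{S})\subseteq\mtt{F}_t(\mc{S})$ by applying the symmetry condition to $A\in\mtt{PF}_t(\mc{S})$ and observing that $A\mc{S}^*\cup\mc{S}^*A\subseteq\mc{S}$ while the Frobenius set is disjoint from $\mc{S}$, so the intersection witness must be $A$ itself; the paper phrases this at the level of sets where you extract an explicit $D$ with $AD=C$ and force $D=\mathbf{1}_n$. Your side remark that Remark~\ref{R:absurd} is not directly applicable is accurate, and the paper indeed does not use it here.
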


\begin{proof}
Let $A \in \mtt{PF}_t(\mc{S})$. 
Then $A\in G(\N)\setminus \mc{S}$ and $(\mc{S}^*A \cup A\mc{S}^*) \subseteq \mc{S}^*$.
Since $\mc{S}$ is symmetric, we have either $\mtt{F}_t(\mc{S}) \cap A\mc{S} \neq \emptyset$ or $\mtt{F}_t(\mc{S})\cap \mc{S}A \neq \emptyset$.
Hence, unless $A\in \mtt{F}_t(\mc{S})$, we have $\mtt{F}_t(\mc{S})\cap \mc{S}^* \neq \emptyset$.
But (the left, the right, or the two-sided) Frobenius subsets of $\mc{S}$ are always disjoint from $\mc{S}$. 
This contradiction shows that the membership $A\in \mtt{F}_t(\mc{S})$ holds true. 
It follows that $\mtt{PF}_t(\mc{S})$ is a subset of $\mtt{F}_t(\mc{S})$.
Since the converse inclusion holds automatically, we find that $\mtt{PF}_t(\mc{S}) = \mtt{F}_t(\mc{S})$.
Finally, since $\mc{S}$ is irreducible, by Theorem~\ref{intro:T6}, we have $|\mtt{F}_t(\mc{S})| = 1$. 
This finishes the proof of our proposition.
\end{proof}
\medskip

In an earlier version of our paper, we had used more stringent definitions for symmetric and pseudo-symmetric unipotent numerical monoids. 
An anonymous referee suggested that these earlier definitions be referred to by a different name.
In light of the referee's insightful suggestion, we now introduce the ``strong'' versions of symmetric and pseudo-symmetric unipotent numerical monoids.

\begin{Definition}
We call an irreducible unipotent numerical monoid \red{\em strongly symmetric} if for every $A\in G(\N)\setminus \mc{S}$ we have 
$\mtt{F}_t(\mc{S}) \cap A\mc{S} \cap \mc{S}A \neq \emptyset$.
We call an irreducible unipotent numerical monoid \red{\em strongly pseudo-symmetric} if the following two conditions are satisfied:
\begin{enumerate}
\item there exists an element $B\in G(\N)\setminus \mc{S}$ such that $B^2 \in \mtt{F}_t(\mc{S})$, and 
\item for every $A\in G(\N)\setminus (\mc{S} \sqcup \{B\})$ we have 
$\mtt{F}_t(\mc{S}) \cap A\mc{S} \cap \mc{S}A \neq \emptyset$.
\end{enumerate}
\end{Definition}

For this definition, we have the following result. 
	
\begin{Theorem}\label{intro:T7'}
Let $\mc{S}$ be a unipotent numerical monoid. 
If $\mc{S}$ is irreducible, then $\mc{S}$ is either strongly symmetric or strongly pseudo-symmetric.
\end{Theorem}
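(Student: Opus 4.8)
The plan is to leverage the two facts already available for an irreducible $\mc{S}$. First, Theorem~\ref{C:irreducibleimpliesF=1} gives $\mtt{F}_t(\mc{S}) = \{C\}$ for a single matrix $C$ and $\mtt{F}_t(\mc{S}) = \mtt{SG}(\mc{S})$; in particular $C^2 \in \mc{S}$ and, since $C \in \mtt{PF}_t(\mc{S})$, we have $C\mc{S}^* \cup \mc{S}^*C \subseteq \mc{S}$. Second, because $|\mtt{F}_t(\mc{S})| = 1$, Theorem~\ref{T:threecharacterizationsofirr3} (with $\mtt{F} = \mtt{F}_t$) tells us that $\mc{S}$ is maximal, with respect to inclusion, among unipotent numerical monoids $\mc{T}$ with $C \notin \mc{T}$. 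I would then introduce the set of \emph{problematic gaps}
\[
\mc{X} := \{ A \in \mathtt{Gaps}(\mc{S}) \mid \mtt{F}_t(\mc{S}) \cap A\mc{S} \cap \mc{S}A = \emptyset \} = \{ A \in \mathtt{Gaps}(\mc{S}) \mid A^{-1}C \notin \mc{S} \ \text{or} \ CA^{-1} \notin \mc{S} \}.
\]
If $\mc{X} = \emptyset$, then by definition $\mc{S}$ is strongly symmetric and we are done, so assume $\mc{X} \neq \emptyset$. The target is to prove that $\mc{X} = \{ B \}$ is a singleton with $B^2 = C$: granting this, $B$ is a gap with $B^2 \in \mtt{F}_t(\mc{S})$ and every gap $A \neq B$ satisfies $\mtt{F}_t(\mc{S}) \cap A\mc{S} \cap \mc{S}A \neq \emptyset$, which is precisely the definition of strongly pseudo-symmetric.

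The heart of the proof is to show that each $B \in \mc{X}$ satisfies $B^2 = C$, and this is where I expect the main difficulty. Here is the intended route. One first observes that being problematic propagates upward in the $\mc{S}$-orders: if $A^{-1}C \notin \mc{S}$ and $A \leq_{\mc{S},l} A'$ then ${A'}^{-1}C \notin \mc{S}$ (otherwise $A^{-1}C = (A^{-1}A')({A'}^{-1}C) \in \mc{S}$), and symmetrically on the right. Hence, after replacing $B$ by a gap lying above it in the appropriate $\mc{S}$-order, one may assume (say, in the left case; the right case is handled by exchanging the roles of left and right multiplication) that $B$ is $\leq_{\mc{S},l}$-maximal among gaps, i.e.\ $B \in \mtt{PF}_l(\mc{S})$, so that $B\mc{S}^* \subseteq \mc{S}$. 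Among all such $B$ I would further take one maximal for the entrywise order $\leq_e$; this mirrors the choice of the largest nonsymmetric gap in the classical numerical-semigroup argument, and it is available because $\mathtt{Gaps}(\mc{S})$ is finite. Since $\langle \mc{S}, B \rangle$ is a unipotent numerical monoid properly containing $\mc{S}$, maximality forces $C \in \langle \mc{S}, B \rangle$. Using $B\mc{S}^* \subseteq \mc{S}$ one can reduce any expression for an element of $\langle \mc{S}, B \rangle$ to the form $W_0 B^{k} W_1$ with $W_0, W_1 \in \mc{S}$ and $k \geq 0$; thus $C = W_0 B^k W_1$ with $k \geq 1$. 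The remaining work is to collapse this to $C = B^2$. The tools are: cancellativity of $G(\N)$; the containments $C\mc{S}^* \cup \mc{S}^*C \subseteq \mc{S}$; the fact that a product of nonnegative upper-triangular unipotent matrices dominates each of its factors entrywise, so that $C$ is the unique $\leq_e$-maximal gap (every $\leq_e$-maximal gap lies in $\mtt{F}_t(\mc{S})$, a singleton) and every ``subword'' of $C$ stays $\leq_e C$; and the $\leq_e$-maximality of $B$ in $\mc{X}$, which forbids an iterate $B^{k-1}$ that turns out to be a problematic gap from exceeding $B$. A case analysis on whether $W_0$, $W_1$ and the exponent are trivial --- excluding $C = BW_1$ via $B^{-1}C \notin \mc{S}$, and excluding a surviving nontrivial $W_i$ or $k \geq 3$ via the $\leq_e$-maximality of $B$ together with $C \in \mtt{SG}(\mc{S})$ --- should leave only $C = B^2$. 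In particular $B^2 = C$, and then also $CB^{-1} = B \notin \mc{S}$, so every problematic gap is problematic on both sides.

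Finally, for the uniqueness of $B$: if $B_1, B_2 \in \mc{X}$, then by the previous step $B_1^2 = C = B_2^2$. Since $G_\Z$ is a subgroup of the torsion-free nilpotent group $\mathbf{U}(n,\Z)$, in which square roots are unique --- this can be checked directly, superdiagonal by superdiagonal, by verifying that $X \mapsto X^2$ is injective on $\mathbf{U}(n,\Z)$ --- we conclude $B_1 = B_2$. Thus $\mc{X} = \{ B \}$ with $B^2 = C \in \mtt{F}_t(\mc{S})$, and $\mc{S}$ is strongly pseudo-symmetric as explained in the first paragraph. The step I expect to be genuinely delicate is the combinatorial collapse of the expression $C = W_0 B^k W_1$ inside $\langle \mc{S}, B \rangle$: it must, in particular, rule out the configuration $C = W_0 B$ with $W_0 \in \mc{S}^*$, a purely noncommutative phenomenon with no analogue for numerical semigroups and exactly the obstruction that motivated the weaker (non-strong) notions of symmetry.
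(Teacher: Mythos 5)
Your high-level strategy is the same as the paper's: isolate the gaps $A$ for which $\mtt{F}_t(\mc{S})\cap A\mc{S}\cap\mc{S}A=\emptyset$, pick an extremal one, adjoin it to $\mc{S}$, and play the resulting monoid against the maximality and irreducibility of $\mc{S}$. The observation that square roots are unique in $\mathbf{U}(n,\Z)$ is correct and is a genuine (if only implicit in the paper) ingredient for the pseudo-symmetric case. However, there are two real gaps. The first is a quantifier error: you announce that \emph{each} $B\in\mc{X}$ satisfies $B^2=C$, but your argument immediately replaces $B$ by a gap above it in an $\mc{S}$-order and then by one that is $\leq_e$-maximal, so whatever you prove concerns only that replacement. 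A problematic gap strictly below it would still violate strong pseudo-symmetry, and your uniqueness step (``by the previous step $B_1^2=C=B_2^2$'') tacitly assumes the conclusion for all of $\mc{X}$. The paper avoids this by defining $\mc{D}:=\{H\in G(\N)\setminus\mc{S}\mid C\neq H^2 \text{ and } C\notin H\mc{S}^*\cap\mc{S}^*H\}$ --- excluding the square roots of $C$ from the start --- and proving that $\mc{D}\setminus\{C\}$ is \emph{empty}, which is a statement about every gap at once.

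The second gap is the one you flag yourself, and it is not a detail: ruling out $C=WB$ with $W\in\mc{S}^*$ (equivalently, $C\in\mc{S}^*B$ while $C\notin B\mc{S}^*$) is where essentially all of the content of the theorem lives, and your extremal choice does not supply the leverage to do it. You take $B\in\mtt{PF}_l(\mc{S})$, left-problematic, and $\leq_e$-maximal among such; but a gap lying $\leq_e$-above $B$ need not be left-problematic and need not lie in $\mtt{PF}_l(\mc{S})$, so knowing that it is ``not of your type'' tells you almost nothing. The paper instead takes $H$ to be $\leq_e$-maximal in the two-sided set $\mc{D}\setminus\{C\}$, so that \emph{any} gap strictly above $H$ entrywise either squares to $C$ or satisfies $C\in H'\mc{S}^*\cap\mc{S}^*H'$, and the entire case analysis consists of manufacturing such gaps ($FB$, $BF$, $BAB$, $ABA$, $ABE$, \dots) and extracting contradictions via cancellativity. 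Without a two-sided extremal hypothesis of this kind, I do not see how your ``collapse'' of $C=W_0B^kW_1$ goes through, so the proposal is incomplete precisely at its critical step.
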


\begin{proof}
Since $\mc{S}$ is irreducible, we know that $|\mtt{F}_t(\mc{S}) | = 1$. 
Let $C$ denote the unique element of $\mtt{F}_t(\mc{S})$. 
We define 
\begin{align*}
\mc{D} := \{ H \in G(\N) \setminus \mc{S} \mid  C\neq H^2 \text{ and } C \notin H\mc{S}^*\cap \mc{S}^*H\}.
\end{align*}
It is easy to check that $C\in \mc{D}$. 
Let us assume that $\mc{D}\setminus \{C\}\neq \emptyset$. 
Let $H$ be an element from $\max_{\leq_e} (\mc{D}\setminus \{C\})$. 
We will show that $\mc{S}\sqcup \{H\}$ is a unipotent numerical monoid. 
Let $A$ and $B$ be two elements from $\mc{S}\sqcup \{H\}$. 
We check all mutually distinct cases: 
\begin{enumerate}
\item[(a)] We assume that both $A$ and $B$ are from $\mc{S}$. 
Then we automatically have $AB\in \mc{S}\sqcup \{H\}$.

\item[(b)] We assume that $A\in \mc{S}^*$ and $B= H$. 
We will show that $BA \in \mc{S}$. 
Of course, if $BA$ is already an element of $\mc{S}$, then there is nothing to do. 
Towards a contradiction, we proceed with the assumption that $BA\notin \mc{S}$. 
Since $B$ is a maximal element with respect to $\leq_e$, we see that $BA\notin \mc{D}$. 
Hence we have one of the following conditions:
\begin{enumerate}
\item[(1)] $C = (BA)^2$, 
\item[(2)] $C\in (BA)\mc{S}^*\cap  \mc{S}^*(BA)$.
\end{enumerate} 

We proceed with (2). Then, since $A \in \mathcal{S}^*$ and $C\in (BA)\mc{S}^*\cap  \mc{S}^*(BA)$, we have $C \in B \mathcal{S}^*$.
Let $E, F \in \mathcal{S}^*$ be the elements such that $C = BE = FBA$. 
Since $B$ is from $\mathcal{D}$, we know that $C \notin B \mathcal{S}^* \cap \mathcal{S}^* B$.  
Therefore, $C\in B\mathcal{S}^*$ implies that $C \notin \mathcal{S}^* B$. 
In particular, we observe that $C \notin \mathcal{S}^* FB \cap FB \mathcal{S}^*$.
We now have two additional observations regarding the element $FB$. 
First, we observe that $(FB)^2 \neq C$. Indeed, if we assume the contrary, then $FBFB = FBA$ implies that $FB = A$. It follows that $FBA = A^2$, which is absurd. 
Secondly, we observe that $FB \notin \mathcal{S}$. 
Otherwise, we would have $FBA \in \mathcal{S}$, which is absurd.
These observations imply that $FB \in \mathcal{D}$. 
However, this leads to a contradiction since $B$ is maximal with respect to $\leq_e$. Hence, item (2) is not possible.

We proceed with (1). 
Let $X:=BAB$.
Then $X\notin\mc{S}^*$.
Otherwise, we would have $C=XA \in \mc{S}^*$, which is absurd. 
It follows from the cancellative property of $\mc{S}$ that $X^2 \neq C$. 
We proceed to check whether the following membership holds or not:
\begin{align}\label{A:trueornot}
C \in \mc{S}^* X \cap X \mc{S}^*.
\end{align} 
Let us assume that (\ref{A:trueornot}) is true.
Then we have an element $E\in \mc{S}^*$ such that 
\begin{align}\label{A:baba=abab}
C = EBAB= BABA.
\end{align}
We now observe that both $Z:= EBA$ and $Y:= ABA$ cannot be elements of $\mc{S}^*$ at the same time.  
Otherwise, we see that $C = B(ABA) \in B\mc{S}^*$ and $C = (EBA)B \in \mc{S}^*B$.
This means that $C\in B\mc{S}^* \cap \mc{S}^* B$, which is not possible since $B$ is from $\mc{D}$. 
Let us proceed with the assumption that $Y \notin \mc{S}^*$.
Evidently, $Y^2\neq C$. 
It is also true that $C\notin Y\mc{S}^* \cap \mc{S}^* Y$. 
Otherwise, we find that $C= LY$ for some $L\in \mc{S}^*$.
By the cancellative property of $\mc{S}$, we see that $B=L$, which is absurd. 
But all of these observations show that $Y\in \mc{D}$, which contradicts with our assumption that $B$ is maximal with respect to $\leq_e$. 
Thus, we may now assume that $Z\notin \mc{S}^*$. 
Notice that $Z^2 = EBAEBA$. 
If $Z^2 = C$, then by the cancellative property of $\mc{S}$, we see that $EBAE= BA$, which is absurd. 
Finally, it is also true that $C\notin Z\mc{S}^* \cap \mc{S}^* Z$. 
Otherwise, we find that $C= LZ=LEBA$ for some $L\in \mc{S}^*$. 
By using the cancellative property of $\mc{S}$ once more, we see that $LE= BA$.
Since both $L$ and $E$ are from $\mc{S}^*$, we see that $LE \in \mc{S}^*$. 
However, by our initial assumption $BA \notin \mc{S}^*$. 
Hence, we obtained our desired contradiction that item (1) is not possible neither.

\item[(c)] We assume that $A\in \mc{S}^*$ and $B= H$. 
We will show that $AB \in \mc{S}$.
Once again, towards a contradiction, we assume that $AB\notin \mc{S}$. 
As in part (b), the maximality of $B$ implies that one of the following conditions must hold true:
\begin{enumerate}
\item[(1)] $C = (AB)^2$, 
\item[(2)] $C\in (AB)\mc{S}^*\cap  \mc{S}^*(AB)$.
\end{enumerate} 

We begin with (2). Then, since $A \in \mathcal{S}^*$ and $C\in (AB)\mc{S}^*\cap  \mc{S}^*(AB)$, we have $C \in \mathcal{S}^*B$.
Let $E, F \in \mathcal{S}^*$ be the elements such that $C = EB = ABF$. 
Since $B$ is from $\mathcal{D}$, we know that $C \notin B \mathcal{S}^* \cap \mathcal{S}^* B$.  
Therefore, $C\in \mathcal{S}^*B$ implies that $C \notin B\mathcal{S}^* $.

We will make some observations regarding the element $BF$. 
First, we observe that $(BF)^2 \neq C$. Indeed, if we assume the contrary, then $BFBF = ABF$ implies that $BF = A$. 
It follows that $C=ABF = A^2$, which is absurd. 
Secondly, we observe that $BF \notin \mathcal{S}$. 
Otherwise, we would have $ABF \in \mathcal{S}$, which is absurd.
Finally, we check if $C\in \mc{S}^* BF \cap BF \mc{S}^*$ or not. 
Since $BF\mc{S}^*\subset B\mc{S}^*$ and since we observed earlier that $C \notin B\mathcal{S}^* $,
we see that $C\notin \mc{S}^* BF \cap BF \mc{S}^*$.
These observations imply that $BF \in \mathcal{D}$. 
However, this leads to a contradiction since $B$ is maximal with respect to $\leq_e$. Hence, item (2) is not possible.

We now consider (1). 
Let $Y:=BAB$. 
Then $Y\notin \mc{S}^*$.
Otherwise, we find that $C=AY\in \mc{S}^*$, which is absurd. 
It is also evident that $Y^2 \neq C$. 
We proceed to check whether $C\in Y\mc{S}^* \cap \mc{S}^*Y$ holds or not. 
Let us assume that it is true.
Then there exist $E\in \mc{S}^*$ such that 
$$
ABAB = C =  YE = BABE.
$$
We claim that both $W:=ABA$ and $Z:=ABE$ cannot be elements of $\mc{S}^*$ at the same time.
Otherwise, we see that $C = (ABA)B \in \mc{S}^*B$ and $C=B(ABE) \in B\mc{S}^*$, implying that 
$C\in \mc{S}^*B\cap B\mc{S}^*$. 
This is absurd, since $B$ is from $\mc{D}$. 

We proceed with the assumption that $W\notin \mc{S}^*$. 
Clearly, $W^2\neq C$. 
It is also true that $C\notin W\mc{S}^* \cap \mc{S}^* W$.
Otherwise, there exists $L\in \mc{S}^*$ such that $ABAB = ABAL$.
By the cancellative property of $\mc{S}$, we see that $B=L$ holds, which is absurd. 
These observations imply that $W\in \mc{D}$.
But this contradicts with the maximality of $B$ with respect to $\leq_e$. 

We now proceed with the assumption that $Z\notin \mc{S}^*$. 
Note that $Z^2\neq C$. Otherwise we would have $ABEABE = ABAB$, implying $EABE=AB$, which is absurd. 
Notice also that $C\notin Z\mc{S}^* \cap \mc{S}^* Z$. 
Otherwise, we find that $C= ZL = ABEL$. 
Then, by the cancellative property of $\mc{S}$, we have $AB=EL\in \mc{S}^*$, which contradicts with our initial assumption that $AB\notin \mc{S}^*$. 
Hence, $C\notin Z\mc{S}^* \cap \mc{S}^* Z$. 
Now all of these observations show that $Z\in \mc{D}$.
But, once again, this leads to a contradiction with the maximality of $B$ with respect to $\leq_e$. 
Hence, (1) is not possible also.

\item[(d)] We assume that $A=B= H$. 
We will show that $H^2\in \mc{S}$.
			
Towards a contradiction, we assume that $H^2 \notin \mc{S}$. 
Parts (b) and (c) show that for every $A\in \mc{S}^*$ we have $\{ AH , HA \} \subseteq \mc{S}^*$.
Hence, it follows from the definition of the two-sided pseudo-Frobenius sets that $H\in \mtt{PF}_t(\mc{S})$. 
At the same time, since $H$ is a maximal element, we know that $H^2 \notin \mc{D}$. 
This means that either $H^4 = C$ or $C\in H^2\mc{S}^* \cap \mc{S}^* H^2$. 
If $H^4=C$ holds, then we claim that $H^3\in \mc{D}$ has to hold.
Let us assume that $H^3\in\mc{S}$. 
Then $C=H^4$ is an element of $H\mc{S}^*\cap \mc{S}^*H$, which is absurd. 
Hence, we proceed with the assumption that $H^3\in \mtt{G}(\mb{N})\backslash\mc{S}$. 
Now, if $H^3$ is not an element of $\mc{D}$, then we must have that $C=H^4\in H^3\mc{S}^*\cap \mc{S}^* H^3$.
Then it follows that $H\in \mc{S}^*$, which is a contradiction. 
Hence, we proved our claim that $H^3$ is contained in $\mc{D}$ if $C=H^4$ is assumed. 
But this contradicts with the maximality of $H$ in $\mc{D}\setminus \{C\}$. 
Therefore, the remaining possibility, that is, $C\in H^2\mc{S}^*\cap \mc{S}^* H^2$ must be true. 
In this case, let $A,B\in\mc{S}^*$ be two elements such that $C=AH^2=H^2B$. 
By parts (b) and (c), we have $AH,HB\in \mc{S}^*$.
It follows that $C\in H\mc{S}^*\cap \mc{S}^* H$, a contradiction. 
In conclusion, we have $H^2\in \mc{S}$. 
\end{enumerate}

Hence, we showed that $\mc{S} \sqcup \{H\}$ is a unipotent numerical monoid. 
We now have two unipotent numerical monoids $\mc{S} \sqcup \{C\}$ and $\mc{S}\sqcup \{H\}$, properly containing $\mc{S}$, such that we have 
$$
(\mc{S} \sqcup \{C\}) \cap (\mc{S}\sqcup \{H\}) = \mc{S}.
$$ 
This contradicts with our hypothesis that $\mc{S}$ is irreducible.  
We proceed with the assumption that $\mc{D} = \{ C\}$. 
It follows from the definition of $\mc{D}$ that for every $H \in G(\N) \setminus \mc{S} \sqcup \{C\}$ such that $H^2\neq C$ we have $C\in H\mc{S}^*\cap \mc{S}^*H$.
Equivalently, for every such $H$, we have $\mtt{F}_t(\mc{S}) \cap H\mc{S}^* \cap \mc{S}^* H \neq \emptyset$. 
Therefore, we have $\mtt{F}_t(\mc{S}) \cap H\mc{S} \cap \mc{S} H \neq \emptyset$ for every $H\in G(\N)\setminus \mc{S}$ such that $H^2 \neq C$. 
Now, on one hand, if there is no $H$ in $G(\N)\setminus \mc{S}$ such that $H^2 = C$, then we readily see that $\mc{S}$ is a symmetric unipotent numerical monoid. 
On the other hand, if there is such an $H$ in $G(\N)\setminus \mc{S}$, then we readily see that $\mc{S}$ together with $B:=H$ satisfy the conditions for $\mc{S}$ being a pseudo-symmetric unipotent numerical monoid. 
Hence, we finished the proof of our theorem.  
\end{proof}

We are now ready to prove our Theorem~\ref{intro:T7} which states the following: 
\medskip

Let $\mc{S}$ be a unipotent numerical monoid. 
If $\mc{S}$ is irreducible, then $\mc{S}$ is either symmetric or pseudo-symmetric.

\begin{proof}[Proof of Theorem~\ref{intro:T7}]
It is easy to check that every strongly symmetric unipotent numerical monoid is a symmetric unipotent numerical monoid. 
Similarly, every strongly pseudo-symmetric unipotent numerical monoid is a pseudo-symmetric unipotent numerical monoid. 
The rest of the proof follows from Theorem~\ref{intro:T7'}.
\end{proof}

The strong symmetry property imposes severe restrictions on the left and the right Frobenius sets.

\begin{Corollary}
If $\mc{S}$ is a strongly symmetric unipotent numerical monoid, then we have
\begin{align*}
\mtt{F}_r(\mc{S}) = \mtt{F}_l(\mc{S}) = \mtt{F}_t(\mc{S}).
\end{align*}
\end{Corollary}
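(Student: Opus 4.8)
The plan is to first reduce to the case where the two-sided Frobenius set is a singleton. A strongly symmetric unipotent numerical monoid is by definition irreducible, so Theorem~\ref{C:irreducibleimpliesF=1} gives $|\mtt{F}_t(\mc{S})| = 1$; write $\mtt{F}_t(\mc{S}) = \{C\}$. Since we always have $\mtt{F}_t(\mc{S}) = \mtt{F}_l(\mc{S}) \cap \mtt{F}_r(\mc{S})$, it suffices to establish the reverse inclusions $\mtt{F}_l(\mc{S}) \subseteq \{C\}$ and $\mtt{F}_r(\mc{S}) \subseteq \{C\}$.

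For the left inclusion, I would take an arbitrary $A \in \mtt{F}_l(\mc{S})$ and first check that $A \in G(\N)\setminus \mc{S}$: by definition $A \notin \mc{S}^*$, and $A \neq \mathbf{1}_n$, since otherwise $A(G(\N)^*) = G(\N)^* \subseteq \mc{S}$ would contradict $\mtt{g}(\mc{S}) > 0$. Now apply the strong symmetry hypothesis to $A$: it yields $\mtt{F}_t(\mc{S}) \cap A\mc{S} \cap \mc{S}A \neq \emptyset$, that is, $C \in A\mc{S} \cap \mc{S}A$; in particular $C = AD$ for some $D \in \mc{S}$. If $D$ were in $\mc{S}^*$, then $D \in G(\N)^*$ and the defining property $A(G(\N)^*) \subseteq \mc{S}$ of $\mtt{F}_l(\mc{S})$ would force $C = AD \in \mc{S}$, contradicting $C \notin \mc{S}$. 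Hence $D = \mathbf{1}_n$ and $A = C$, so $\mtt{F}_l(\mc{S}) = \{C\} = \mtt{F}_t(\mc{S})$.

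The right inclusion is handled symmetrically: for $A \in \mtt{F}_r(\mc{S})$, the same argument shows $A \in G(\N)\setminus \mc{S}$, and strong symmetry gives $C \in \mc{S}A$, so $C = DA$ with $D \in \mc{S}$; if $D \in \mc{S}^*$, then $(G(\N)^*)A \subseteq \mc{S}$ forces $C \in \mc{S}$, which is absurd, hence $A = C$. Combining the two inclusions yields $\mtt{F}_l(\mc{S}) = \mtt{F}_r(\mc{S}) = \mtt{F}_t(\mc{S}) = \{C\}$.

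I do not anticipate a real obstacle here; the only points requiring a little care are verifying that every element of $\mtt{F}_l(\mc{S})$ (resp. $\mtt{F}_r(\mc{S})$) genuinely lies in $G(\N)\setminus\mc{S}$ so that the strong symmetry hypothesis applies, and observing that only the ``$C \in A\mc{S}$'' half (resp. the ``$C \in \mc{S}A$'' half) of the strong symmetry condition is actually used in each case.
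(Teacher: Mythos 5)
Your proof is correct and takes essentially the same route as the paper's: both reduce to $\mtt{F}_t(\mc{S})=\{C\}$ (you via Theorem~\ref{C:irreducibleimpliesF=1}, the paper via Proposition~\ref{P:PF=F}) and then apply the relevant half of the strong symmetry condition to get $C=AD$ with $D\in\mc{S}$, forcing $D=\mathbf{1}_n$ from the defining property of $\mtt{F}_l(\mc{S})$ --- which is exactly the content of the paper's Remark~\ref{R:absurd}. Your explicit check that elements of $\mtt{F}_l(\mc{S})$ lie in $G(\N)\setminus\mc{S}$ is a small point of care the paper leaves implicit; there is no substantive difference.
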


\begin{proof}
We already know the inclusions $\mtt{F}_t(\mc{S}) \subseteq \mtt{F}_l(\mc{S})$ and $\mtt{F}_t(\mc{S}) \subseteq \mtt{F}_r(\mc{S})$. 
By Proposition~\ref{P:PF=F}, we know that the two-sided Frobenius set has a unique element, $\mtt{F}_t(\mc{S})=\{C\}$.
Assume towards a contradiction that there is an element $A\in  \mtt{F}_l(\mc{S}) \setminus \mtt{F}_t(\mc{S})$.
Since $A$ is from $G(\N)\setminus \mc{S}$, we know that $\mtt{F}_t(\mc{S}) \cap A \mc{S}^* \cap \mc{S}^* A \neq\emptyset$. 
Hence, there exists $B\in \mc{S}^*$ such that $C=AB$. 
But, as we pointed in Remark~\ref{R:absurd}, if $A$ and $C$ are two elements from $\mtt{F}_l(\mc{S})$, then there is no element $B\in G(\N)^*$ such that $AB=C$.
This contradiction shows that $ \mtt{F}_l(\mc{S}) = \mtt{F}_t(\mc{S})$. 
The equality $\mtt{F}_r(\mc{S}) = \mtt{F}_t(\mc{S})$ is proven similarly. 
Hence, our proof is complete.
\end{proof}

Recall our notation $\preceq$ from the introduction. It denotes the partial order $\leq_{G(\N),t}$ on $G(\N)$. 
We are now ready to prove the Theorem~\ref{intro:T8}.
We recall its statement for the convenience of the reader. 
\medskip

Let $\mc{S}$ be a strongly symmetric unipotent numerical monoid in $G(\N)$ such that $\mtt{F}_t(\mc{S}) = \{C\}$ for some $C\in G(\N)\setminus \mc{S}$. 
Then we have 
\begin{align*}
|\mtt{n}(\mc{S},C, \preceq) | = |\mtt{g}(\mc{S},C, \preceq)|= \mtt{g}(\mc{S}).
\end{align*}

\begin{proof}[Proof of Theorem~\ref{intro:T8}]

By Propositions~\ref{P:PF=F} and~\ref{P:PFisinF}, we know that $\{C\} = \mtt{F}_t(\mc{S})=\mtt{PF}_t(\mc{S})$.
Let $X\in G(\N)\backslash\mc{S}$. 
Then we have $\mtt{F}_t(\mc{S})\cap X \mc{S}^* \cap \mc{S}^* X \neq \emptyset$, implying that $X\preceq C$. 
This means that $G(\N)\setminus \mc{S}$ is a subset of $\mtt{g}(\mc{S},C,\preceq)$. 
But by definition, $\mtt{g}(\mc{S},C,\preceq)$ is a subset of $G(\N)\setminus \mc{S}$. 
Therefore we proved the following equalities of sets for symmetric numerical monoids:
\begin{align*}
G(\N)\setminus \mc{S} =\mtt{g}(\mc{S},C,\preceq) = \{ X\in G(\N)\setminus \mc{S}\mid \{ CX^{-1},X^{-1}C\}\subset \mc{S} \}.
\end{align*}
It follows that $\mtt{g}(\mc{S}) = | \mtt{g}(\mc{S},C,\preceq)|$.

Now, let us analyze the set $\mtt{n}(\mc{S},C, \preceq)$. 
This is precisely the set of all elements $A\in \mc{S}$ such that $\{A^{-1}C, CA^{-1}\} \subseteq G(\N)$. 
In addition, since $\mtt{n}(\mc{S},C, \preceq)\subset \mc{S}$, we must have that $\{ A^{-1}C , CA^{-1} \} \cap \mc{S} = \emptyset$. 
Otherwise, we would have $C\in \mc{S}$ which is absurd. 
In conclusion, the set $\mtt{n}(\mc{S},C, \preceq)$ is precisely the set of all elements $A\in \mc{S}$ such that 
$CA^{-1}\in G(\N)\setminus \mc{S}$ and $A^{-1}C\in G(\N)\setminus \mc{S}$.
We define a map $\Psi:\mtt{g}(\mc{S},C,\preceq) \to \mtt{n}(\mc{S},C,\preceq)$ as follows:
\begin{align*}
\Psi (X) := CX^{-1}\qquad (X\in \mtt{g}(\mc{S},C,\preceq)).
\end{align*}
It is evident from the fact that $\mc{S}$ is a cancellative monoid that $\Psi$ is an injective map.
We claim that this map is onto. 
Let $Y\in \mtt{n}(\mc{S},C,\preceq)$. We set $X:=Y^{-1}C$. 
Then we know from the previous paragraph that $X\in G(\N)\setminus \mc{S}$.
Hence, we see that $\Psi(X) = CX^{-1} = C (Y^{-1}C)^{-1} = Y$. 
This finishes the proof of the fact that the sets $\mtt{g}(\mc{S},C,\preceq)$ and $\mtt{n}(\mc{S},C,\preceq)$ are in a bijection with each other. 
\end{proof}

The last proposition of our paper shows how our previous theorem generalizes a well-known property of the symmetric generalized numerical semigroups.

\begin{Proposition}
Let $\mc{S}$ be a strongly symmetric unipotent numerical monoid. 
Then we have 
\begin{align*}
| \mtt{n}(G(\N),C,\leq_e) |= \mtt{g}(\mc{S}) + |\mtt{n}(S,C,\leq_e ) | 
\end{align*}
In particular, if $G$ is the commutative unipotent numerical monoid $\mathbf{P}(n,\N)$, then we have 
\begin{align*}
\text{vol}(\text{Cube}(C))= 2\mtt{g}(\mc{S}),
\end{align*}
where $\text{vol}(\text{Cube}(C))$ stands for the volume of the convex hull of points $P\in \N^{n}$, where $P$ is obtained from the vector $(1,c_2,\dots, c_n)$ by setting some of the coordinates to 0. Here, $(1,c_2,\dots, c_n)$ is the first row of the matrix $C$. 
\end{Proposition}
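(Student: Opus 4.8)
The plan is to decompose the counting problem over the various "supports" of the element $C$. Write $C\in G(\N)$ with first row $(1,c_2,\dots,c_n)$ (when $G=\mathbf{P}(n,\C)$), and in general record which entries of $C$ are nonzero. The set $\mtt{n}(G(\N),C,\leq_e)$ consists of all $A\in G(\N)$ with $\mathbf{1}_n\leq_e A\leq_e C$; I would partition this set according to whether $A\in\mc{S}$ or $A\notin\mc{S}$. Since every $A$ with $\mathbf{1}_n\leq_e A\leq_e C$ is an element of $G(\N)$, and using the identification established in the proof of Theorem~\ref{intro:T8} (which gives $G(\N)\setminus\mc{S} = \mtt{g}(\mc{S},C,\preceq)$, a subset of the $\preceq$-interval below $C$, and hence also of the $\leq_e$-interval below $C$ since $\preceq$ refines $\leq_e$), the gaps all lie inside this cube. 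So the cardinality splits as
\begin{align*}
|\mtt{n}(G(\N),C,\leq_e)| = |\{A\in\mc{S} : \mathbf{1}_n\leq_e A\leq_e C\}| + \mtt{g}(\mc{S}).
\end{align*}
The first summand is exactly $|\mtt{n}(\mc{S},C,\leq_e)|$ by definition, which for the statement's notation I would write as $|\mtt{n}(S,C,\leq_e)|$; this yields the displayed identity.

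For the second ("in particular") assertion, I would specialize to $G=\mathbf{P}(n,\C)$, where $G(\N)\cong\N^{n-1}$ and the entrywise order $\leq_e$ becomes the coordinatewise order on $\N^{n-1}$ via the map $C\mapsto(c_2,\dots,c_n)$. Here the cube $\mathrm{Cube}(C)$ is the box $\prod_{i=2}^n[0,c_i]$, whose lattice points number $\prod_{i=2}^n (c_i+1)$ and whose Euclidean volume equals the same product $\prod_{i=2}^n(c_i+1)$ — the standard fact that an axis-aligned integer box in $\R^{n-1}$ has volume equal to its number of lattice points is false in general, so instead I would interpret $\mathrm{vol}(\mathrm{Cube}(C))$ as the number of lattice points (equivalently the volume of the union of unit cubes centered at those points), which is what the description "convex hull of the points obtained by zeroing out coordinates" is pointing at. Then $\mathrm{vol}(\mathrm{Cube}(C)) = |\mtt{n}(G(\N),C,\leq_e)|$, since in the commutative case $G(\N)$ is itself the full box below $C$... this requires care, because $\mtt{n}(G(\N),C,\leq_e)$ is literally all of $\N^{n-1}$ intersected with the box, i.e. the whole box. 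So $|\mtt{n}(G(\N),C,\leq_e)| = \prod(c_i+1) = \mathrm{vol}(\mathrm{Cube}(C))$.

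Combining, in the commutative case $\mathrm{vol}(\mathrm{Cube}(C)) = \mtt{g}(\mc{S}) + |\mtt{n}(\mc{S},C,\leq_e)|$, and the final step is to observe that $|\mtt{n}(\mc{S},C,\leq_e)| = \mtt{g}(\mc{S})$. This last equality is the heart of the matter and is where I would lean on Theorem~\ref{intro:T8}: in the commutative setting $\preceq$ and $\leq_e$ agree on the interval $[\mathbf{1}_n,C]$ (indeed $A\preceq C \iff C-A\in\mc{S}$ and $A\leq_e C$ means $C-A\in G(\N)$, but for $A\in\mc{S}$ with the symmetric hypothesis one checks these coincide on the relevant range), so $\mtt{n}(\mc{S},C,\leq_e)=\mtt{n}(\mc{S},C,\preceq)$, and Theorem~\ref{intro:T8} gives $|\mtt{n}(\mc{S},C,\preceq)| = \mtt{g}(\mc{S})$. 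Hence $\mathrm{vol}(\mathrm{Cube}(C)) = \mtt{g}(\mc{S}) + \mtt{g}(\mc{S}) = 2\mtt{g}(\mc{S})$.

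The main obstacle I anticipate is reconciling the informal phrase "volume of the convex hull of points obtained by zeroing coordinates" with an honest combinatorial quantity: one must check that this convex hull is precisely the axis-parallel box $\prod[0,c_i]$ (it is, since zeroing out subsets of coordinates of $(1,c_2,\dots,c_n)$ produces exactly the vertices of that box, projected to $\N^{n-1}$), and that the intended meaning of "volume" is the lattice-point count (equivalently, the Lebesgue volume of the box shifted by $[0,1]^{n-1}$, i.e. $\prod(c_i+1)$ rather than $\prod c_i$). Once that normalization is fixed, the rest is the bookkeeping above together with a direct appeal to Theorem~\ref{intro:T8} and the fact that, in the commutative case, the order $\preceq$ restricted to $[\mathbf{1}_n, C]$ coincides with $\leq_e$ on the subset $\mc{S}\cap[\mathbf{1}_n,C]$.
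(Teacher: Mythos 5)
Your proposal is correct and follows essentially the same route as the paper: both split the entrywise interval $[\mathbf{1}_n,C]$ into gaps and elements of $\mc{S}$ after observing (via strong symmetry, i.e.\ the identity $G(\N)\setminus\mc{S}=\mtt{g}(\mc{S},C,\preceq)$ from the proof of Theorem~\ref{intro:T8} together with the fact that $\preceq$ implies $\leq_e$) that every gap lies entrywise below $C$, and both then reduce the commutative case to Theorem~\ref{intro:T8} by identifying $\preceq$ with $\leq_e$ on $\mathbf{P}(n,\N)$. Your explicit reading of $\mathrm{vol}(\mathrm{Cube}(C))$ as the lattice-point count $\prod_{i=2}^{n}(c_i+1)$ is exactly the normalization the paper uses implicitly.
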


\begin{proof}
Let $\mtt{F}_t(\mc{S})$ be given by $\{C\}$.
By Propositions~\ref{P:PF=F} and~\ref{P:PFisinF}, we know that 
\begin{align*}
\{C\} = \mtt{F}_t(\mc{S})=\mtt{PF}_t(\mc{S}).
\end{align*} 
This means that for every gap set element $X\in G(\N)\setminus \mc{S}$, we have $X\leq_e C$.
In particular, we see that 
\begin{align}\label{A:leqcleqt}
\{ X\in G(\N)\setminus \mc{S} \mid X\leq_e C \} = \{ X\in G(\N) \setminus \mc{S} \mid X \leq_{\mc{S},t} C\}.  
\end{align}
Since $C$ is the unique maximal element of $G(\N)\setminus \mc{S}$ with respect to $\leq_{\mc{S},t}$,
(\ref{A:leqcleqt}) implies that 
\begin{align}\label{A:triviallyintersect}
G(\N)\setminus \mc{S} = \mtt{n}(G(\N)\setminus \mc{S} , C ,\leq_e).
\end{align} 
Then we see that 
\begin{align*}
| \mtt{n}(G(\N),C,\leq_e) | &= | \{ B\in G(\N)\setminus\mc{S} \mid  B\leq_e C \} | + | \{ B\in \mc{S} \mid B\leq_e C\}|  \\
&= | G(\N)\setminus\mc{S} | + | \mtt{n}(S,C,\leq_e )|.
\end{align*}
This finishes the proof of our first assertion.

We now proceed with the assumption that $G= \mathbf{P}(n,\N)$.
Then we notice that the orders $\leq_e$ and $\preceq$ on a unipotent numerical monoid $\mc{S}$ in $G$ are the same. 
Indeed, an element $B\in \mc{S}$ covers another element $A\in \mc{S}$ in $\leq_e$ if and only if the difference $B-A$ has all nonnegative entries.
By using elementary matrices, we can express this as follows: 
\begin{align*}
B=AE_{1,j} \quad \text{ for some $1\leq j\leq n$},
\end{align*}
where $E_{1,j}$ is the elementary unipotent matrix with 1 at its $(1,j)$-th entry. 
Hence, by Theorem~\ref{intro:T8}, we see that $| \mtt{n}(G(\N),C,\leq_e) | = 2\mtt{g}(S)$.
Let $(1,c_2,\dots, c_n)$ denote the first row of $C$. 
Then $|\mtt{n}(G(\N),C,\leq_e)|$ is given by the number of points in the hypercube whose vertices are obtained from $(1,c_2,\dots, c_n)$ by setting some of the coordinates to 0. 
But this proves our second assertion.
Hence, the proof of our proposition is complete. 
\end{proof}

\section*{Acknowledgements}

The authors are grateful to the referee. 
The comments and remarks of the referee not only significantly improved the quality and the strength of this article but also clarified some of its obscurities. 
The first author gratefully acknowledges partial support from the Louisiana Board of Regents grant, contract no. LEQSF(2023-25)-RD-A-21.
	
\bibliography{references.bib}
\bibliographystyle{plain}
\end{document}